\newcommand*{\qrr@gobblenexttocentry}[5]{}
\newcommand*{\qrr@gobblenexttocentry}[4]{}
\newcommand*{\addsection}{
	\addtocontents{toc}{\protect\qrr@gobblenexttocentry}
	\section}
\theoremstyle{plain}
\newtheorem{theorem}{Theorem}[section]
\newtheorem{corollary}[theorem]{Corollary}
\newtheorem{lemma}[theorem]{Lemma}
\newtheorem{proposition}[theorem]{Proposition}
\theoremstyle{definition}
\newtheorem{remark}[theorem]{Remark}
\numberwithin{equation}{section}
\newcommand{\R}{\mathbb{R}}
\newcommand{\e}{\varepsilon}
\newcommand{\abs}[1]{\left\lvert#1\right\rvert}
\newcommand{\plaplacian}{\Delta_p}
\newcommand{\norm}[1]{\|{#1}\|}
\newcommand{\seminorm}[1]{\left [{#1} \right ]}
\newcommand{\diver}{\textnormal{div}}
\newcommand{\average}{{\mathchoice {\kern1ex\vcenter{\hrule height.4pt
				width 6pt depth0pt} \kern-9.7pt} {\kern1ex\vcenter{\hrule
				height.4pt width 4.3pt depth0pt} \kern-7pt} {} {} }}
\newcommand{\ave}{\average\int}
\begin{document}
	
\title[Optimal regularity of stable solutions to $p$-Laplace equations]{Optimal regularity of stable solutions to nonlinear equations involving the $p$-Laplacian}

\author[X. Cabr\'e]{Xavier Cabr\'e}
\author[P. Miraglio]{Pietro Miraglio}
\author[M. Sanch\'on]{Manel Sanch\'on}

\address{X.C.\textsuperscript{1,2}
\textsuperscript{1} ICREA, Pg.\ Lluis Companys 23, 08010 Barcelona, Spain
\&
\textsuperscript{2} Universitat Polit\`ecnica de Catalunya, Departament de Matem\`{a}tiques, Diagonal 647, 08028 Barcelona, Spain
}
\email{xavier.cabre@upc.edu}

\address{P.M.\textsuperscript{1,2},
	\textsuperscript{1} Universit\`a degli Studi di Milano, Dipartimento di Matematica, via Cesare Saldini 50, 20133 Milano, Italy
	\&
	\textsuperscript{2} Universitat Polit\`ecnica de Catalunya, Departament
	de Matem\`{a}tiques, Diagonal 647,
	08028 Barcelona, Spain}
\email{miraglio.pietro@gmail.com}

\address{M.S.,
	Grupo AIA - Aplicaciones en Inform\'atica Avanzada, SL, ESADECREAPOLIS
	Planta 2a Bloc C Portal 1,
	Av. Torre Blanca 57, CP-08172 Sant Cugat del Vall\`es, Spain}
\email{manel.sanchon@gmail.com}

\keywords{}

\thanks{X.C. and P.M. are supported by grant MTM2017-84214-C2-1-P (Government of Spain) and are members of the research group 2017SGR1392 (Government of Catalonia).
}

\begin{abstract}
	We consider the equation $-\Delta_p u=f(u)$ in a smooth bounded domain of $\mathbb{R}^n $, where $\Delta_p$ is the $p$-Laplace operator. Explicit examples of unbounded stable energy solutions are known if $n\geq p+4p/(p-1)$.
	Instead, when $n<p+4p/(p-1)$, stable solutions have been proved to be bounded only in the radial case or under strong assumptions on $f$.
	
	In this article we solve a long-standing open problem: we prove an interior $C^\alpha$ bound for stable solutions which holds for every nonnegative $f\in C^1$ whenever $p\geq2$ and the optimal condition $n<p+4p/(p-1)$ holds.
	When $p\in(1,2)$, we obtain the same result under the non-sharp assumption~$n<5p$.
	These interior estimates lead to the boundedness of stable and extremal solutions to the associated Dirichlet problem when the domain is strictly convex.
	
	Our work extends to the $p$-Laplacian some of the recent results of Figalli, Ros-Oton, Serra, and the first author for the classical Laplacian, which have established the regularity of stable solutions when $p=2$ in the optimal range $n<10$.
\end{abstract}

\maketitle

\section{Introduction}
Given $ p\in(1,+\infty)$, a function $ f\in C^1(\R)$, and a smooth bounded domain $ \Omega\subset\R^n $, we consider the elliptic equation involving the $p$-Laplacian
\begin{equation}\label{plap_eq}
-\plaplacian u=-\text{div}(\abs{\nabla u}^{p-2}\nabla u)= f(u) \qquad \text{in}\,\,\Omega,
\end{equation}
as well as the Dirichlet problem
\begin{equation}
\label{plap_Dir}
\left\{
\begin{array}{rcll}
-\plaplacian u&=& f(u) & \qquad \text{in}\,\,\Omega \\
u&>&0 & \qquad\text{in}\,\,\Omega\\
u&=&0 & \qquad\text{on}\,\,\partial\Omega.\\
\end{array}\right.
\end{equation}

A function $u\in W^{1,p}(\Omega)$ is an \textit{energy solution} to equation \eqref{plap_eq} if $f(u)\in L^1_{\rm loc}(\Omega)$ and it satisfies
\begin{equation*}\label{plap_weak}
\int_\Omega\abs{\nabla u}^{p-2}\nabla u\cdot\nabla\xi\,dx=\int_\Omega f(u)\xi\,dx \quad \quad \text{for every}\,\,\,\xi\in C_c^1(\Omega).
\end{equation*}
We say that $u$ is a \textit{regular solution} to \eqref{plap_eq} if it is an energy solution and, in addition, it satisfies $f(u)\in L^\infty(\Omega)$.
By classical regularity results every regular solution to \eqref{plap_eq} is $ C^{1,\vartheta}(\Omega)$ for some $ \vartheta>0 $, while regular solutions to \eqref{plap_Dir} are  $C^{1,\vartheta}(\overline{\Omega})$ --- see \cite{DiB,T,L}. This is the best regularity that one can expect for regular solutions to nonlinear equations involving the $p$-Laplacian.

Our interest lies in nonlinearities $f(u)$ that grow faster than $u^{p-1}$ as $u\to+\infty$. In such case, it is easy to see that the energy functional associated to the equation,
\[
E(u)=\int_\Omega\left(\frac1p \abs{\nabla u}^p - F(u)\right)\,dx,
\]
where $F'=f$, admits no absolute minimizer. However, in some important cases, the functional will admit local minimizers, or more generally stable solutions --- as defined next. For instance, they will exist, for a certain range of parameters $\lambda$, in the case of Gelfand-type problems, in which the nonlinearity is given by $\lambda f(u)$. We will apply our results to these well studied problems, whose model nonlinearities are $\lambda e^u$ and $\lambda (1+u)^m$ with $m>p-1$.

To define the notion of stable solutions to \eqref{plap_eq} we need a function space introduced by Damascelli and Sciunzi in~\cite{DS} and extensively used in~\cite{FSV,FSV1,CasSa}.
Assuming that $u$ is a regular solution, we define the weighted Sobolev space $W^{1,2}_{\sigma}(\Omega)$ with weight 
\[\sigma=\abs{\nabla u}^{p-2}\]
as the completion of $C^1(\overline{\Omega})$ with respect to the norm
\begin{equation}\label{weighted_norm}
\begin{split}
\norm{\xi}_{W^{1,2}_\sigma(\Omega)}:&=\norm{\xi}_{L^2(\Omega)}+\norm{\nabla\xi}_{L_\sigma^2(\Omega)}
\\
&=\left(\int_\Omega\xi^2\,dx\right)^\frac12+\left(\int_\Omega\abs{\nabla u}^{p-2}\abs{\nabla\xi}^2\,dx\right)^\frac12.
\end{split}
\end{equation}
We also define $W^{1,2}_{\sigma,0}(\Omega)$ as the completion of $C^1_c(\Omega)$ with respect to the $W^{1,2}_\sigma$-norm.

As in~\cite{DS,FSV,FSV1,CasSa}, a regular solution $u$ to \eqref{plap_eq} is said to be {\it stable} if the second variation of the energy functional at $u$ is nonnegative definite, i.e.,
\begin{equation}\label{stability}
\int_{\Omega}\left\{\abs{\nabla u}^{p-2}\abs{\nabla\xi}^2+(p-2)\abs{\nabla u}^{p-4}\left(\nabla u\cdot\nabla\xi\right)^2\right\}\,dx-\int_\Omega f'(u)\xi^2\,dx\geq0
\end{equation}
for every $ \xi\in \mathcal{T}_u $, where $\mathcal{T}_u$ is the space of test functions defined 
as
\begin{equation*}
\mathcal{T}_u:=
\begin{aligned}
\begin{cases}
W^{1,2}_{\sigma,0}(\Omega)\qquad&\text{if}\,\,p\geq2,
\\
\{\xi\in W^{1,2}_0(\Omega):\,\,\norm{\nabla\xi}_{L_\sigma^2(\Omega)}<\infty	\} \qquad&\text{if}\,\,p\in(1,2).
\end{cases}
\end{aligned}
\end{equation*}
This class of functions, introduced in the seminal paper~\cite{DS} by 
Damascelli and Sciunzi, is a Hilbert space\footnote{The distinction between $p\geq 2$ and $p\in (1,2)$ in the definition is made to guarantee that $\mathcal{T}_u$ is a complete space; see \cite{DS,CasSa} for more details.} which is natural here, in the sense that it is a completion of smooth functions under the main quantity in the first integral in the stability inequality~\eqref{stability}. In addition, the space will contain the test function $\xi=\abs{\nabla u}\eta$, with $\eta$ a cut off function, thanks to the important estimate~\eqref{firstDS} proved in~\cite{DS}.  This test function was used by Farina, Sciunzi, and Valdinoci~\cite{FSV, FSV1} to derive Theorem~\ref{thm_SZ_stability} below, 
a result that will play a key role in our proof.

We point out that, as we will see in Remark \ref{rmk_integrability},
the first integral in \eqref{stability} can also be computed over $\{\abs{\nabla u}>0\}\cap\Omega$ instead of $\Omega$ without altering the value of the integral.

Stable solutions to \eqref{plap_Dir} have been extensively investigated in the case $p=2$, starting in 1975 with the seminal paper of Crandall and Rabinowitz \cite{CR} --- see the monograph \cite{D} by Dupaigne and the survey \cite{C2} for a description of the classical literature in the field. As we will see, regularity theory for stable solutions to semilinear equations has been an important line of research in the last decades.
Very recently Figalli, Ros-Oton, Serra, and the first author \cite{CFRS}
provided a complete answer to this topic, proving that stable energy solutions to \eqref{plap_Dir} with $p=2$ are bounded --- and therefore smooth --- up to dimension 9, whenever the nonlinearity $f$ is nonnegative, nondecreasing, and convex\footnote{These hypotheses on the nonlinearity are related to some open problems stated by Brezis \cite{B} and by Brezis-Vázquez \cite{BV} about \textit{extremal solutions}, as we shall explain in Subsection~\ref{subsec_extremal}.} For interior boundedness, \cite{CFRS} establishes that the nonnegativeness of $f$  suffices. These results are indeed optimal, since examples of singular stable energy solutions to \eqref{plap_Dir} with $p=2$ are known for such nonlinearities when $n\geq10$. 

For $p>1$ the study of the boundedness of stable solutions to \eqref{plap_Dir} was initiated by Garc\'ia-Azorero, Peral, and Puel \cite{GP,GPP} in the case $f(u)=\lambda e^u$. They established that, with this choice of the nonlinearity, stable energy solutions are bounded\footnote{To be totally precise, these two references, as well as others cited next, focus on the boundedness of the so-called {\it extremal solution} --- a  type of stable solution that we introduce later in Subsection~\ref{subsec_extremal}. However, the proofs also establish an apriori $L^\infty$ bound for the more general class of stable regular solutions.} whenever 
\begin{equation}\label{optimal_dimension}
n<p+\frac{4p}{p-1},
\end{equation}
proving also that this condition is optimal. Indeed, they showed that when $\Omega=B_1$ and $ n\geq p+4p/(p-1) $, then $ u(x)=\log(\abs{x}^{-p}) $ is a singular stable energy solution of~\eqref{plap_Dir} with $f(u)=p^{p-1}(n-p)e^u$.

The boundedness of stable solutions to \eqref{plap_Dir} in the optimal dimension range \eqref{optimal_dimension} has been proved also for power-like nonlinearities\footnote{More generally, the boundedness of stable solutions in the sharp dimension range \eqref{optimal_dimension} is also ensured if $f\in C^2$ satisfies \eqref{f_p_extremal_hp}, \eqref{f_p_convexity}, and that the limit $\tau:=\lim_{t\rightarrow+\infty} f(t)f''(t)/f'(t)^2$ exists. Notice that this last assumption is rather strong. In addition, one can check that if the limit exists then necessarily $\tau\leq 1$, otherwise $f$ blows-up ``in finite time'' and is not defined in the whole real line. When $\tau=1$, the result in the optimal dimension range follows from \cite{San}. Instead, if $\tau<1$, then $f(t)\leq C(1+t)^m$ for some~$m$ and the result follows from \cite{CSa}. 
For the classical case $p=2$, this was proven in \cite{CR}.}  by the first and third authors~\cite{CSa} and in the radial case by Capella, the first and third authors~\cite{CCaSa} for every locally Lipschitz nonlinearity.

Instead, in the nonradial case and for general nonlinearities $f\in C^1$, only some nonoptimal boundedness results have been proved.
Some common assumptions on the nonlinearity are
\begin{equation}\label{f_p_extremal_hp}
f(0)>0, \qquad f'>0, \qquad \lim_{t\to+\infty}\frac{f(t)}{t^{p-1}}=+\infty,
\end{equation}
as well as
\begin{equation}\label{f_p_convexity}
\begin{cases}
(f(t)-f(0))^\frac{1}{p-1} \,\,\, \text{is convex}\qquad &\text{if}\,\,\, p\geq2,
\\
f \,\,\, \text{is convex}\qquad &\text{if}\,\,\, p\in(1,2).
\end{cases}
\end{equation}
Under these assumptions on $f$, the third author \cite{San,San1} extended Nedev's approach \cite{N} to the semilinear ($p=2$) problem, proving that stable solutions are bounded whenever
\begin{equation}\label{intro-Ned-San}
\begin{cases}
n<p+\frac{p}{p-1}\qquad &\text{if}\,\,\, p\geq2,
\\
n\leq p+\frac{2p}{p-1}(1+\sqrt{2-p})\qquad &\text{if}\,\,\, p\in(1,2).
\end{cases}
\end{equation}
Note that these two bounds on $n$ are strictly below the optimal one, \eqref{optimal_dimension}. The result for $p\geq2$ was proved in \cite[Theorem 1]{San1}, while the one for $p<2$ was obtained in \cite[Theorem~1]{San} --- note here that $(f(t)-f(0))^\frac{1}{p-1}$ is convex since $p\in(1,2)$, $f'>0$, and $f''\geq0$.

Some years later, Castorina and the third author~\cite{CasSa} extended to the case of the $p$-Laplacian the ideas of the first author~\cite{C} for $p=2$, where the boundedness of stable solutions to semilinear equations was proved whenever $n\leq4$ and $\Omega$ is convex.
Following this strategy, \cite{CasSa} established the boundedness of stable solutions for strictly convex domains in dimension $n\leq p+2$ for every~$p>2$, under hypotheses~\eqref{f_p_extremal_hp} and~\eqref{f_p_convexity} on $f$. Note that $p+\frac{p}{p-1}\leq p+2$ when $p\geq2$, but that the result from \cite{CasSa} only applies to strictly convex domains --- in contrast with the one of \cite{San1} for general smooth domains.

When $p> 2$ and $n\geq4$, the condition $n\leq p+2$ from \cite{CasSa} has been recently improved by the second author \cite{Miraglio}. This work extends to the $p$-Laplacian the method of the first author~\cite{C1} for $p=2$ and establishes boundedness in strictly convex domains under the assumption
\begin{equation}\label{dimMiraglio}
n < \frac12 \left(\sqrt{(p-1)(p+7)}+p+5	\right)
\end{equation}
for $p>1$. When $p>2$ this condition is better than the one of \cite{CasSa}.

The aim of the present paper is to extend the interior regularity results of Figalli, Ros-Oton, Serra, and the first author~\cite{CFRS} for $p=2$ to the case of the $p$-Laplacian. More precisely, we obtain an interior $C^\alpha$ apriori estimate for stable regular solutions to~\eqref{plap_eq}, with a linear control of the $C^\alpha$ norm in terms only of the $L^1$ norm and with a constant independent of the nonlinearity. 
Our result holds for every nonnegative\footnote{Our proof uses the nonnegativeness of $f$ to control several times, in Section \ref{sec_higher}, weighted integrals of $|\Delta_{p} u|=-\Delta_{p} u$ through integration by parts; see Lemma~\ref{positive_plap} in this respect. That the solution $u$ is $p$-superharmonic is also used crucially in the proof of Lemma~\ref{lemma_doubling}, which controls the $L^{p}$-norm of its full gradient by a weighted $L^{2}$-norm of the radial derivative; here we use a compactness argument that requires $p$-superharmonicity. However, as pointed out in more detail in Subsection~\ref{subsec_open}, it is not known if our boundedness results hold without assuming the nonnegativeness of the nonlinearity.
All these comments apply also to the case $p=2$ treated in \cite{CFRS}.
}
$f\in C^1$
in the optimal dimension range~\eqref{optimal_dimension} when $ p\geq2 $ and in the range $n<5p$ when $p\in(1,2)$.
Under these hypotheses on $n$ and $p$ and as a direct consequence of our estimates, we show the boundedness of stable solutions to the Dirichlet problem~\eqref{plap_Dir} in strictly convex domains.

\subsection{Main results}
Theorem \ref{thm_interior} below is the main result of the present paper. It establishes a universal interior bound on the $ C^\alpha $ norm of stable solutions to \eqref{plap_eq} assuming that the nonlinearity is nonnegative and that $n$ and $p$ satisfy condition~\eqref{dimension} below. When $p\geq 2$ this condition on the dimension is optimal\footnote{After the statement of Theorem \ref{thm_extremal_new} on extremal solutions, we will compare our nonoptimal condition $n < 5p$ for $p\in (1,2)$ with previously known results.}.
Under the same assumption on the nonlinearity but in every dimension, we also prove a higher integrability result for the gradient of stable solutions to \eqref{plap_eq}. This last result relies on an $L^1$ control on the $(p+1)$-Laplacian $\Delta_{p+1}$ of a stable solution, a new estimate that we also state in the theorem. Since the results are local, we state them in the unit ball $ B_1\subset\R^n $.

\vspace{7mm}

\begin{theorem}\label{thm_interior}
	Let $B_1$ be the unit ball of $\R^n$ and $u$ a stable regular solution of 
	\[-\Delta_pu=f(u)\qquad\text{in}\,\,B_1,\]
	with $ f\in C^1(\R)$ nonnegative.
	
	Then,
	\begin{equation}\label{L1controls_p}
	\norm{\nabla u}_{L^{p+\gamma}(B_{1/2})}\leq C\norm{u}_{L^1(B_1)}
	\end{equation}
	and
	\begin{equation}\label{pplus1}
	\norm{\Delta_{p+1}u}_{L^1(B_{1/2})}\leq C \norm{u}_{L^1(B_1)},
	\end{equation}
	where $\gamma$ and $C$ are positive constants depending only on $n$ and $p$. In addition, if
	\begin{equation}\label{dimension}
	\begin{cases}
	n<p+\frac{4p}{p-1}\qquad &\text{for}\quad p\geq2, \\
	n<5p\qquad&\text{for}\quad p\in(1,2),
	\end{cases}
	\end{equation}	
	then
	\begin{equation}\label{C.alfa}
	\norm{u}_{C^{\alpha}(\overline{B}_{1/2})}\leq C\norm{u}_{L^1(B_1)},
	\end{equation} 
	where $\alpha$ and $C$ are positive constants depending only on $n$ and $p$.
\end{theorem}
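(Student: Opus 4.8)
The plan is to adapt to the degenerate and singular $p$-Laplacian setting the approach of \cite{CFRS} for $p=2$: I would first establish the two integral bounds \eqref{L1controls_p} and \eqref{pplus1} in every dimension, and then deduce the Hölder estimate \eqref{C.alfa} by pushing the resulting gain of integrability as far as the restriction \eqref{dimension} allows. The right-hand side $\norm{u}_{L^1(B_1)}$ would enter the argument only once, through a crude starting estimate: since $f\geq0$, the function $u$ is $p$-superharmonic, so the classical potential estimates for $p$-superharmonic functions give $\norm{\nabla u}_{L^{q_0}(B_{3/4})}\leq C\norm{u}_{L^1(B_1)}$ for some exponent $q_0=q_0(n,p)>0$. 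Everything that follows is an a priori estimate whose sole purpose is to improve $q_0$.

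The engine would be a family of weighted reverse-Poincaré inequalities for $\abs{\nabla u}$, obtained by coupling stability with the equation. Inserting the test function $\xi=\abs{\nabla u}^{\beta}\eta$, $\eta\in C^\infty_c(B_1)$, into \eqref{stability} and invoking the geometric stability inequality of Farina, Sciunzi and Valdinoci (Theorem~\ref{thm_SZ_stability}), one controls a weighted $L^2$-norm of the tangential gradient $\abs{\nabla_T\abs{\nabla u}}$ and of the second fundamental form of the level sets of $u$; the missing normal direction is supplied by the equation, which in a frame adapted to those level sets reads
\[
\abs{\nabla u}^{p-2}\bigl((p-1)\,\partial_\nu\abs{\nabla u}+\abs{\nabla u}\,H\bigr)=\Delta_p u=-f(u)\qquad\text{on }\{\abs{\nabla u}>0\},
\]
with $\nu=\nabla u/\abs{\nabla u}$ and $H$ the mean curvature of the level set $\{u=\mathrm{const}\}$; since $-f(u)\leq0$, this controls $\partial_\nu\abs{\nabla u}$ by the curvature $\abs{H}$ and by the nonnegative source $\abs{\nabla u}^{2-p}f(u)$, so that --- combined with the tangential control from Theorem~\ref{thm_SZ_stability} --- one controls the full $\abs{\nabla\abs{\nabla u}}$ in the weighted $L^2$-norm. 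Here, and repeatedly, the nonnegativity of $f$ is what lets one integrate by parts the terms containing $-\Delta_p u$ (the role of Lemma~\ref{positive_plap} and Remark~\ref{rmk_integrability}, which also legitimise working on the open set $\{\abs{\nabla u}>0\}$ despite the degeneracy of $\sigma=\abs{\nabla u}^{p-2}$), while Lemma~\ref{lemma_doubling} is what converts the weighted $L^2$-norm of the radial derivative produced by a radial cutoff back into $\norm{\nabla u}_{L^p}$. Assembling these ingredients and absorbing the curvature and source terms would yield, for admissible exponents $\beta$, a reverse-Poincaré inequality which, written in terms of $w=\abs{\nabla u}^{\mu}$ with $\mu=\mu(\beta)\geq p/2$, takes the form
\[
\int_{B_1}\abs{\nabla(w\eta)}^2\,dx\leq C\int_{B_1}w^2\,\abs{\nabla\eta}^2\,dx+(\text{lower-order terms}).
\]
Since this lives in the \emph{unweighted} $W^{1,2}$, the Sobolev embedding upgrades $\abs{\nabla u}\in L^{q}_{\mathrm{loc}}$ to $\abs{\nabla u}\in L^{qn/(n-2)}_{\mathrm{loc}}$, and a self-improving iteration of this gain, started from the exponent $q_0$, gives $\nabla u\in L^{p+\gamma}(B_{1/2})$ for some $\gamma=\gamma(n,p)>0$ --- that is \eqref{L1controls_p}. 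Keeping track along the way of the identity
\[
\Delta_{p+1}u=\diver(\abs{\nabla u}^{p-1}\nabla u)=\abs{\nabla u}\,\Delta_p u+\abs{\nabla u}^{p-1}\partial_\nu\abs{\nabla u}\qquad\text{on }\{\abs{\nabla u}>0\},
\]
whose right-hand side has by then been estimated, one obtains the bound \eqref{pplus1} for the $(p+1)$-Laplacian.

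It then remains to pass from integrability to Hölder continuity. The iteration cannot be run indefinitely: admissibility of $\xi=\abs{\nabla u}^{\beta}\eta$ in $\mathcal{T}_u$ and the absorption of the curvature terms force an upper bound on $\beta$, hence a maximal reachable exponent $q^\ast=q^\ast(n,p)$, and a direct computation of this threshold shows that $q^\ast>n$ precisely when \eqref{dimension} holds. For $p\geq2$ this is optimal --- the borderline $n=p+4p/(p-1)$ is exactly the dimension in which the explicit singular stable solution $\log(\abs{x}^{-p})$ just fails to have gradient in $L^n$ --- whereas for $p\in(1,2)$ the degenerate-elliptic structure is less favourable (the second term of \eqref{stability} has the wrong sign and $\sigma=\abs{\nabla u}^{p-2}$ is singular on the critical set), the absorption is lossy, and one only reaches $q^\ast>n$ under the non-optimal condition $n<5p$, which is strictly stronger than $n<p+4p/(p-1)$ exactly when $p<2$ (the two coinciding at $p=2$). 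Thus, under \eqref{dimension}, $\nabla u\in L^q(B_{1/2})$ for some $q>n$, and Morrey's embedding $W^{1,q}\hookrightarrow C^{0,\,1-n/q}$ --- together with the bound $\norm{u}_{L^\infty(B_{1/2})}\leq C(\norm{u}_{L^1(B_1)}+\norm{\nabla u}_{L^q(B_{1/2})})$ --- yields \eqref{C.alfa} with $\alpha=1-n/q$, after a routine covering-and-rescaling argument (the rescaled problem still carrying a nonnegative $C^1$ nonlinearity and a stable solution).

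The hardest part will be Lemma~\ref{lemma_doubling}, the control of $\norm{\nabla u}_{L^p}$ by a weighted $L^2$-norm of the radial derivative. I would prove it by contradiction and compactness: a blow-up sequence of $p$-superharmonic functions with bounded $p$-energy whose radial derivative tends to $0$ in the limit must converge to a nonconstant function independent of the radial variable, a $0$-homogeneous ``cone'' $h(x/\abs{x})$, whose distributional $p$-Laplacian equals $\abs{x}^{-p}\,\Delta_{p,\mathbb{S}^{n-1}}h$ away from the origin and so cannot be a nonpositive measure unless $h$ is constant, contradicting nonconstancy. Making this rigorous requires the strong $W^{1,p}_{\mathrm{loc}}$-convergence of $p$-energy-bounded $p$-superharmonic sequences and the stability of $p$-superharmonicity under weak limits, both delicate because of the nonlinear degenerate nature of $\Delta_p$. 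Two further technical layers pervade the whole argument: the degeneracy (for $p>2$) or singularity (for $p<2$) of $\sigma$ near $\{\nabla u=0\}$, to be handled each time one differentiates the equation or integrates by parts; and the bookkeeping of the constants throughout the iteration of the second step, which must be carried out with the sharp exponents in order to land exactly on the condition \eqref{dimension}.
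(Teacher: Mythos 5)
Your plan differs from the paper's at the decisive step, and the difference is not just a matter of taste: the engine you propose is the one that, historically, \emph{cannot} reach the dimension threshold \eqref{dimension}. You build everything on test functions of the form $\xi=\abs{\nabla u}^\beta\eta$ (equivalently, on Theorem~\ref{thm_SZ_stability}, which is $\beta=1$), a reverse-Poincar\'e inequality, and a Moser-type iteration to push $\nabla u\in L^{q_0}$ up to some $q^\ast>n$, then invoke Morrey's embedding. This is precisely the strategy of Nedev, Cabr\'e~\cite{C,C1}, Sanch\'on~\cite{San,San1}, Castorina--Sanch\'on~\cite{CasSa}, and Miraglio~\cite{Miraglio}, and it lands on the non-optimal conditions \eqref{intro-Ned-San}, $n\leq p+2$, or \eqref{dimMiraglio} --- all of which are strictly inside \eqref{dimension}. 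Your assertion that ``a direct computation of this threshold shows that $q^\ast>n$ precisely when \eqref{dimension} holds'' has no support: the optimal constant $p(n-p)-(1+(p-2)^{+})\bigl(\frac{n-p}{2}\bigr)^2>0$ does \emph{not} appear from powers of $\abs{\nabla u}$; it appears only when one tests stability with the radial field $\xi=(x\cdot\nabla u)\,r^{(p-n)/2}\,\phi(\abs{\nabla u}/\e)\,\zeta$, and this test function is entirely absent from your plan. That is the key idea of the paper (Lemma~\ref{lemma_stability}), imported from \cite{CFRS}, and it is not derivable from the $\abs{\nabla u}^\beta\eta$ family.

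There is a second, related structural mismatch. The paper never proves $\nabla u\in L^q$ for some $q>n$ --- in fact \eqref{L1controls_p} only gains a tiny $\gamma>0$ over $p$, valid in \emph{all} dimensions. H\"older regularity instead comes from a Morrey-type decay $\rho^{p-n}\int_{B_\rho(z)}\abs{\nabla u}^p\,dx\leq C\rho^{p\alpha}$, produced by a hole-filling iteration on $\mathcal R(\rho)=\int_{B_\rho}r^{p-n}\abs{\nabla u}^{p-2}u_r^2\,dx$ (Lemmas~\ref{lemma_stability}, \ref{lemma_doubling} and the abstract Lemma~\ref{lemma_seq}), followed by \cite[Theorem 7.19]{GT}; this is genuinely weaker than $\nabla u\in L^q$, $q>n$. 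You also slot Lemma~\ref{lemma_doubling} into the wrong role: it is not a device for upgrading Lebesgue exponents but the compactness step that makes the hole-filling dichotomy ($\mathcal D(\rho)\geq\tfrac12\mathcal D(2\rho)\Rightarrow\mathcal R(\rho)\leq C(\mathcal R(2\rho)-\mathcal R(\rho))$) run. Finally, the passage from $\norm{\nabla u}_{L^p}$ to $\norm{u}_{L^1}$ on the right-hand side is not a one-line potential estimate: in the paper it requires the Hessian bounds of Lemma~\ref{lemma_II_der}, the new interpolation inequality of Proposition~\ref{prop5.2} (for $p\geq2$), and the absorption Lemma~\ref{lemma_simon}, applied at all scales. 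Your sketch could plausibly be expanded into a proof of the non-optimal results already in the literature, but as written it misses the single ingredient that makes the stated theorem true.
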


Combining the interior bounds of the previous theorem with known boundary estimates in strictly convex domains, we obtain global bounds for stable solutions to \eqref{plap_Dir} in such domains. The boundary estimates are proved in~\cite[Proposition 3.1]{CasSa} assuming that the nonlinearity is positive.

\begin{corollary}\label{cor_convex}
	Let $ \Omega\subset\R^n $ be a smooth bounded domain which is strictly convex and $u$ a stable regular solution of the Dirichlet problem \eqref{plap_Dir}, with $f\in C^1(\R)$ positive. 
	
	Then,
	\begin{equation}\label{Lp_convex_bound}
	\norm{\nabla u}_{L^p(\Omega)}\leq C,
	\end{equation}
	where $C$ depends only on $\Omega$, $p$, $f$, and $\norm{u}_{L^1(\Omega)}$.
	If in addition $n$ and $p$ satisfy \eqref{dimension}, then
	\begin{equation}\label{convex_bound}
	\norm{u}_{L^\infty(\Omega)}\leq C\norm{u}_{L^1(\Omega)},
	\end{equation}
	where $C$ depends only on $\Omega$ and $p$.
\end{corollary}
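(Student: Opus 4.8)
The plan is to glue together the interior estimates of Theorem~\ref{thm_interior}, rescaled on a family of balls that cover an interior subdomain, with the boundary estimate of \cite[Proposition~3.1]{CasSa}. The latter provides constants $\delta_0>0$ and $C_0$, depending only on $\Omega$ and $p$, such that every positive regular solution of \eqref{plap_Dir} satisfies
\[
\norm{u}_{L^\infty(\Omega\setminus\Omega_{\delta_0})}\le C_0\norm{u}_{L^1(\Omega)},\qquad\text{where}\quad\Omega_\delta:=\{x\in\Omega:\operatorname{dist}(x,\partial\Omega)>\delta\};
\]
this is the $p$-Laplacian version of the de Figueiredo--Lions--Nussbaum bound, and the positivity of $f$ (and of $u$) enters through its proof by the moving planes method.

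I would first prove \eqref{Lp_convex_bound}, which requires neither \eqref{dimension} nor the $C^\alpha$ part of Theorem~\ref{thm_interior}. For the interior, note that for each $x_0\in\Omega_{\delta_0/2}$ the ball $B_{\delta_0/2}(x_0)$ is contained in $\Omega$, hence $v(y):=u(x_0+\tfrac{\delta_0}{2}y)$ is, on $B_1$, a stable regular solution of $-\Delta_p v=(\delta_0/2)^p f(v)$ --- stability, $C^{1,\vartheta}$-regularity, and the nonnegativity of the nonlinearity all being preserved under this scaling. Applying \eqref{L1controls_p} to $v$ and changing variables yields $\norm{\nabla u}_{L^{p+\gamma}(B_{\delta_0/4}(x_0))}\le C\delta_0^{-\beta}\norm{u}_{L^1(\Omega)}$ for an exponent $\beta=\beta(n,p)$; covering $\Omega_{\delta_0/2}$ by finitely many such balls (their number depending only on $\Omega$) and using $L^{p+\gamma}\hookrightarrow L^p$ on a bounded set gives $\norm{\nabla u}_{L^p(\Omega_{\delta_0/2})}\le C(\Omega,n,p)\norm{u}_{L^1(\Omega)}$. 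On the boundary layer $\Omega\setminus\Omega_{\delta_0/2}$ I would use the boundary estimate to bound $u$, hence $f(u)$, on $\Omega\setminus\Omega_{\delta_0}$ by a constant depending only on $\Omega,p,f,\norm{u}_{L^1(\Omega)}$; then, since $u=0$ on $\partial\Omega$, the boundary $C^{1,\vartheta}$ regularity theory for the $p$-Laplacian \cite{L} bounds $\norm{\nabla u}_{L^\infty(\Omega\setminus\Omega_{\delta_0/2})}$, and a fortiori $\norm{\nabla u}_{L^p}$ there, by a constant of the same type. Adding the two contributions proves \eqref{Lp_convex_bound}.

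Assuming in addition \eqref{dimension}, I would prove \eqref{convex_bound} by an analogous rescaling-and-covering argument, now feeding in \eqref{C.alfa} instead of \eqref{L1controls_p}: since $\norm{u}_{L^\infty}$ is scale invariant and $\norm{v}_{L^\infty(B_{1/2})}\le\norm{v}_{C^\alpha(\overline{B}_{1/2})}\le C(n,p)\norm{v}_{L^1(B_1)}$, one obtains $\norm{u}_{L^\infty(\Omega_{\delta_0})}\le C(\Omega,n,p)\norm{u}_{L^1(\Omega)}$. Combining this with the boundary bound $\norm{u}_{L^\infty(\Omega\setminus\Omega_{\delta_0})}\le C_0\norm{u}_{L^1(\Omega)}$, whose constant depends only on $\Omega$ and $p$, and since $\Omega_{\delta_0}\cup(\Omega\setminus\Omega_{\delta_0})=\Omega$, we conclude $\norm{u}_{L^\infty(\Omega)}\le C(\Omega,p)\norm{u}_{L^1(\Omega)}$. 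The dependence on $f$ disappears here, in contrast with \eqref{Lp_convex_bound}, precisely because no $C^{1,\vartheta}$ bootstrap near $\partial\Omega$ is needed: only the clean $L^1$-to-$L^\infty$ boundary control is used.

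All the genuine work sits in Theorem~\ref{thm_interior} and in \cite[Proposition~3.1]{CasSa}, so I do not anticipate a serious obstacle. The only points requiring care are the scaling bookkeeping --- checking that stability, regularity, and the sign of $f$ survive the rescaling, and that the powers of $\delta_0$ produced by the change of variables are absorbed into constants depending only on $\Omega,n,p$ --- and keeping straight which form of the boundary estimate is invoked: its universal $L^1$-to-$L^\infty$ version for \eqref{convex_bound}, and its weaker, $f$-dependent consequence for the gradient estimate near $\partial\Omega$ in \eqref{Lp_convex_bound}.
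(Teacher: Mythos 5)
Your proposal is correct and follows essentially the same route as the paper: use the boundary estimate of \cite[Proposition~3.1]{CasSa} to control $u$ (hence $f(u)$) near $\partial\Omega$, bootstrap via boundary $C^{1,\vartheta}$ regularity to control $\nabla u$ in the boundary layer, and combine with the rescaled interior estimates \eqref{L1controls_p} and \eqref{C.alfa} from Theorem~\ref{thm_interior} in the interior; your careful bookkeeping of which constants acquire $f$-dependence and which do not matches the paper's conclusion exactly.
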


In Subsection \ref{subsec_ideas} we describe the main ideas in the proof of the H\"older bound in Theorem~\ref{thm_interior}. In short, it is based on combining two main ingredients. The first one is the key estimate of Lemma~\ref{lemma_stability}, based on choosing an appropriate test function~$\xi$, given by \eqref{first-test} and where $r=|x|$, in the stability condition~\eqref{stability}. In comparison with the case $p=2$ in \cite{CFRS}, the computations to arrive to Lemma~\ref{lemma_stability} are considerably longer and involve a delicate point which does not occur for $p=2$:  to avoid the critical set $\{|\nabla u|=0\}$ where the $p$-Laplacian degenerates, we need to include an additional cut-off factor $\phi(|\nabla u|/\varepsilon)$ within the test function $\xi$ in \eqref{first-test}.

The second ingredient is Lemma~\ref{lemma_doubling}, a $p$-Laplacian analogue of the corresponding result in~\cite[Lemma 3.1]{CFRS}.
This crucial lemma, which is proved through a compactness argument, allows to control the $L^p$ norm of the full gradient of the solution by the integral of its weighted squared radial derivative. In \cite{CFRS} its proof relied on the higher integrability estimate~\eqref{L1controls_p} (for $p=2$) and on an additional compactness property for convolutions with the Newtonian potential. Thus, \cite{CFRS} used here the linearity of the Laplace operator. Instead, in our quasilinear case, to get compactness we will rely on the following new weighted bounds \eqref{intro_IIder_p>2}-\eqref{intro_IIder_p<2} for the Hessian of a stable solution. These Hessian estimates are not present in \cite{CFRS}. In addition, proving them in the strong form of the following theorem, where $\norm{u}_{L^1(B_1)}$ appears on their right-hand side, will require an interpolation inequality  adapted to the $p$-Laplacian (Proposition \ref{prop5.2}) which is new, up to our knowledge, and is valid for all regular enough functions.

\begin{theorem}\label{thm_higher_int}
Let $u$ be a stable regular solution of $ -\Delta_p u=f(u) $ in $ B_1\subset\R^n $, with $ f\in C^1(\R)$ nonnegative. 
Then,
\begin{equation}
\label{intro_IIder_p>2}
\int_{B_{1/2}}\abs{\nabla u}^{p-2}\lvert D^2u\rvert\,dx\leq C\norm{u}_{L^1(B_1)}^{p-1} \qquad \text{if}\,\,\,p\geq2
\end{equation}
and
\begin{equation}
\label{intro_IIder_p<2}
\int_{B_{1/2}}\lvert D^2u\rvert\,dx\leq C\norm{u}_{L^1(B_1)} \qquad \text{if}\,\,\,p\in(1,2),
\end{equation}
where $C$ depends only on $n$ and $p$.
\end{theorem}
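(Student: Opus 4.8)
The plan is to estimate $D^2u$ by resolving it, at each point of $\{\abs{\nabla u}>0\}$, into its components relative to the level sets of $u$. With $\nu=\nabla u/\abs{\nabla u}$, $\partial_{\nu\nu}u=\abs{\nabla u}^{-2}\nabla u\cdot D^2u\,\nabla u$ the normal second derivative, $\nabla_T\abs{\nabla u}$ the tangential part of $\nabla\abs{\nabla u}$ along the level set, and $\mathrm{II}$ its second fundamental form (so that $\abs{\nabla u}^2\abs{\mathrm{II}}^2$ collects the second derivatives of $u$ in directions tangent to it), one has the pointwise identity
\[
\abs{D^2u}^2=(\partial_{\nu\nu}u)^2+2\,\abs{\nabla_T\abs{\nabla u}}^2+\abs{\nabla u}^2\abs{\mathrm{II}}^2 .
\]
The two geometric terms are governed by the Sternberg--Zumbrun--type stability inequality of Farina--Sciunzi--Valdinoci (Theorem~\ref{thm_SZ_stability}): testing it with $\eta\in C^\infty_c(B_{7/8})$, $\eta\equiv1$ on $B_{3/4}$, yields
\[
\int_{B_{3/4}\cap\{\abs{\nabla u}>0\}}\abs{\nabla u}^{p-2}\Bigl(\abs{\nabla_T\abs{\nabla u}}^2+\abs{\nabla u}^2\abs{\mathrm{II}}^2\Bigr)\,dx\le C\int_{B_{7/8}}\abs{\nabla u}^p\,dx .
\]
The normal term is not seen by stability, but it is read off from the equation: since $\Delta_pu=\abs{\nabla u}^{p-2}\bigl((p-1)\partial_{\nu\nu}u+H\abs{\nabla u}\bigr)$ with $H=\operatorname{tr}\mathrm{II}$ the mean curvature of the level set, and $-\Delta_pu=f(u)\ge0$ by Lemma~\ref{positive_plap}, we get on $\{\abs{\nabla u}>0\}$ the pointwise bound $(p-1)\abs{\nabla u}^{p-2}\abs{\partial_{\nu\nu}u}\le f(u)+\sqrt{n-1}\,\abs{\nabla u}^{p-1}\abs{\mathrm{II}}$.

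For $p\ge2$ I would then bound $\abs{\nabla u}^{p-2}\abs{D^2u}\le C_p\bigl(\abs{\nabla u}^{p-2}\abs{\partial_{\nu\nu}u}+\abs{\nabla u}^{p-2}\abs{\nabla_T\abs{\nabla u}}+\abs{\nabla u}^{p-1}\abs{\mathrm{II}}\bigr)$ and integrate over $B_{1/2}$. The $\partial_{\nu\nu}u$-contribution splits, via the previous pointwise bound, into $\int_{B_{1/2}}f(u)$ --- which is $\le C\norm{\nabla u}_{L^p(B_{7/8})}^{p-1}$ by testing the equation against a nonnegative cut-off equal to $1$ on $B_{1/2}$ --- and $\int\abs{\nabla u}^{p-1}\abs{\mathrm{II}}$; and each of $\int\abs{\nabla u}^{p-1}\abs{\mathrm{II}}$ and $\int\abs{\nabla u}^{p-2}\abs{\nabla_T\abs{\nabla u}}$ is controlled by Cauchy--Schwarz as the square root of a quantity bounded by Theorem~\ref{thm_SZ_stability}, times $\bigl(\int_{B_{3/4}}\abs{\nabla u}^{p-2}\bigr)^{1/2}$, which Hölder's inequality (valid for $p\ge2$) bounds by $C\norm{\nabla u}_{L^p(B_{3/4})}^{(p-2)/2}$. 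All powers match the scaling $u\mapsto\lambda u$, under which both sides behave like $\lambda^{p-1}$, and one reaches the preliminary estimate
\[
\int_{B_{1/2}}\abs{\nabla u}^{p-2}\abs{D^2u}\,dx\le C\,\norm{\nabla u}_{L^p(B_{7/8})}^{p-1}.
\]

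The remaining --- and conceptually new --- step is to trade $\norm{\nabla u}_{L^p}$ for $\norm{u}_{L^1(B_1)}$; this is the purpose of the $p$-Laplacian interpolation inequality of Proposition~\ref{prop5.2}, schematically of the form $\int_{B_{\rho'}}\abs{\nabla u}^p\le\varepsilon\bigl(\int_{B_{\rho''}}\abs{\nabla u}^{p-2}\abs{D^2u}\bigr)^{p/(p-1)}+C_\varepsilon\norm{u}_{L^1(B_1)}^p$ for $\rho'<\rho''\le1$, valid for all sufficiently regular functions. The same argument works on any pair of nested balls $B_\rho\subset B_{\rho'}\subset B_1$; inserting the preliminary Hessian bound, choosing $\varepsilon$ small, and running a standard hole-filling iteration over nested balls yields $\norm{\nabla u}_{L^p(B_{7/8})}\le C\norm{u}_{L^1(B_1)}$, and feeding this back into the preliminary estimate gives \eqref{intro_IIder_p>2}. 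For $p\in(1,2)$ the architecture is identical, using the version of Theorem~\ref{thm_SZ_stability} valid in that range (with the test-function class $\mathcal{T}_u$), but now the target $\abs{D^2u}$ is unweighted: one writes $\abs{D^2u}=\abs{\nabla u}^{2-p}\cdot\abs{\nabla u}^{p-2}\abs{D^2u}$ and pairs the weighted geometric quantities against $\int_{B_{1/2}}\abs{\nabla u}^{2-p}\le C\norm{\nabla u}_{L^p}^{2-p}$ (Hölder, since $0<2-p<p$), ending with a bound of homogeneity $1$, $\int_{B_{1/2}}\abs{D^2u}\le C\norm{\nabla u}_{L^p(B_{7/8})}$, which Proposition~\ref{prop5.2} upgrades to \eqref{intro_IIder_p<2}.

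I expect the main obstacle to be the normal direction near the critical set $\{\abs{\nabla u}=0\}$. For $p\ge2$ the weight $\abs{\nabla u}^{p-2}$ degenerates there and absorbs the division by $\abs{\nabla u}^{p-2}$ used to extract $\partial_{\nu\nu}u$ from the equation; for $p\in(1,2)$ the weight instead blows up, and the term $\abs{\nabla u}^{2-p}f(u)=-(p-1)\partial_{\nu\nu}u-H\abs{\nabla u}$ is genuinely delicate and has to be reabsorbed through the nested-ball iteration, exploiting $0<2-p<1$. In both regimes every manipulation involving the level-set decomposition and the equation has to be justified away from, and then across, $\{\abs{\nabla u}=0\}$ by approximation --- cutting off with a factor $\phi(\abs{\nabla u}/\varepsilon)$ and letting $\varepsilon\to0$, as in Lemma~\ref{lemma_stability} --- while keeping all constants dependent only on $n$ and $p$.
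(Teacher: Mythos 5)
Your overall architecture matches the paper's: Lemma~\ref{lemma_II_der} first proves the bounds with $\norm{\nabla u}_{L^p(B_1)}$ on the right (exactly your resolution of $D^2u$ into tangential, mixed, and normal pieces, plus Theorem~\ref{thm_SZ_stability} and the sign of $\Delta_p u$), and Step~1 of the proof of Theorem~\ref{thm_interior} then upgrades $\norm{\nabla u}_{L^p}$ to $\norm{u}_{L^1}$. Your $p\ge2$ preliminary estimate is essentially the paper's argument. But there are two genuine gaps.

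\textbf{The normal term when $p\in(1,2)$.} For $p\ge2$ the factor $\abs{\nabla u}^{p-2}$ vanishes on the critical set and you can integrate the pointwise bound for $\abs{\nabla u}^{p-2}\abs{\partial_{\nu\nu}u}$ against $f(u)=\abs{\Delta_pu}$, which has the right $L^1$ control. For $p<2$ the analogous quantity is $\abs{\nabla u}^{2-p}f(u)$, which blows up on the critical set and is \emph{not} controlled by any nested-ball iteration you run afterwards: you need the linear bound $\int_{B_{1}}\abs{\nabla u}^{2-p}\abs{\Delta_pu}\,\eta^2\,dx\le C\norm{\nabla u}_{L^p(B_1)}$ \emph{before} you can even close the preliminary estimate. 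The paper gets it by rewriting $-\abs{\nabla u}^{2-p}\Delta_pu=(2-p)D^2u[\nu,\nu]-\Delta u$, integrating against $\eta^2$, bounding $\int D^2u[\nu,\nu]\eta^2$ using the sign of $\Delta_pu$ (the one-sided estimate (5.3)/\eqref{up}) and the stability bound on the off-normal part, and integrating $\int\Delta u\,\eta^2$ by parts. This specific identity and integration by parts is the missing idea; ``reabsorb through the iteration'' does not replace it.

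\textbf{The upgrade to $\norm{u}_{L^1(B_1)}$.} The interpolation inequality you write schematically is not Proposition~\ref{prop5.2}. The actual Proposition~\ref{prop5.2} controls $\int\abs{\nabla u}^{p-1}$ (not $\int\abs{\nabla u}^p$) by $\e\int\abs{\nabla u}^{p-2}\abs{D^2u}+C\e^{1-p}\int\abs{u}^{p-1}$. To turn this into $\norm{\nabla u}_{L^p(B_{1/2})}\le C\norm{u}_{L^1(B_1)}$ you must also invoke the higher-integrability estimate $\norm{\nabla u}_{L^{p+\gamma}(B_{1/2})}\le C\norm{\nabla u}_{L^p(B_1)}$ (Lemma~\ref{lemma_Lpgamma}), interpolate $L^p$ between $L^{p-1}$ and $L^{p+\gamma}$, then interpolate $L^{p-1}$ between $L^p$ and $L^1$, and finally absorb with Simon's abstract lemma (Lemma~\ref{lemma_simon}) rather than a bare hole-filling step. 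Your proposal never mentions Lemma~\ref{lemma_Lpgamma}, and without it the $L^{p}$-to-$L^{p-1}$ gap cannot be closed; the claimed ``schematic'' inequality with $\norm{u}_{L^1}$ on the right is simply unproven.

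Minor point: your pointwise identity for $\abs{D^2u}^2$ is correct and equivalent to the paper's use of the Sternberg--Zumbrun identity plus the observation that $\abs{D^2u\cdot\nu}^2=\abs{\nabla_T\abs{\nabla u}}^2+(\partial_{\nu\nu}u)^2$; no issue there.
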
 

In the proof of this result, as well as in other parts of the paper, we make use of the following important integrability results. For  $p>1$, Damascelli and Sciunzi~\cite{DS} proved that $\nabla u\in W^{1,2}_{\sigma,\text{loc}}(\Omega)$ when $u$ is a regular solution --- not necessarily stable --- to \eqref{plap_eq}; recall that $\sigma=\abs{\nabla u}^{p-2}$. In particular, one has
\begin{equation}\label{firstDS}
\int_{B_{1/2}}\abs{\nabla u}^{p-2}\lvert D^2u\rvert^2\,dx < +\infty.
\end{equation} 
At the same time, assuming $p\in(1,2)$, a classical result by Tolksdorf~\cite{T} ensures that $\nabla u\in W^{1,2}_{\text{loc}}(\Omega)$ for every regular solution $u$. Hence, both statements hold when $p\in(1,2)$.
Their results apply to general solutions, but do not provide quantitative bounds as the ones in \eqref{intro_IIder_p>2}-\eqref{intro_IIder_p<2}. However, we require $u$ to be stable and $f$ nonnegative. Note also the exponent~$1$ on $\lvert D^2u\rvert$ in \eqref{intro_IIder_p>2}-\eqref{intro_IIder_p<2}, while the exponent in \eqref{firstDS} is $2$.

\subsection{Application: regularity of extremal solutions}\label{subsec_extremal}
Let $\Omega\subset\R^n$ be a smooth bounded domain and assume that $f\in C^1([0,+\infty))$ satisfies \eqref{f_p_extremal_hp}. Given a constant $\lambda>0$, we consider the nonlinear elliptic problem involving the $p$-Laplacian

\stepcounter{equation}
\begin{equation*}\label{p_lambda_problem}
\left\{
\begin{array}{rcll}
-\plaplacian u&=& \lambda f(u) & \qquad \text{in}\,\,\Omega 
\\
u&>&0 & \qquad\text{in}\,\,\Omega
\\
u&=&0 & \qquad\text{on}\,\,\partial\Omega.
\\
\end{array}\right.
\eqno{(1.19)_{\lambda,p}}
\end{equation*}

In the following theorem we collect some known results concerning problem $(1.19)_{\lambda,p}$.

\begin{theorem}[\cite{CSa,CasSa,San1,San,CFRS}]\label{thm_extremal_known}
Let $\Omega\subset\R^n$ be a smooth bounded domain and $f\in C^1([0,+\infty))$ satisfy \eqref{f_p_extremal_hp}.

Then, there exists $ \lambda^*\in(0,+\infty) $ such that:
\begin{itemize}
	\item[(i)] For $ \lambda\in(0,\lambda^*)$, problem $(1.19)_{\lambda,p}$ admits a smallest regular solution $ u_\lambda $. The family $u_\lambda$ is increasing in $\lambda$ and every $u_\lambda$ is stable.
	\\
	For $ \lambda>\lambda^*$, problem $(1.19)_{\lambda,p}$ admits no regular solution.
	
	\item[(ii)] 
	$u^*:=\lim_{\lambda\uparrow\lambda^*}u_\lambda$
	is an energy solution of $(1.19)_{\lambda^*,p}$ --- in particular $u^*\in W^{1,p}(\Omega)$ --- under one of the following additional assumptions:
	\begin{itemize}
		\item[($\text{ii}_\text{a}$)] $p>2$, \eqref{f_p_convexity} holds, and either $\Omega$ is strictly convex or $n<p+\frac{p^2}{p-1}$; 
		\item[($\text{ii}_\text{b}$)] $p\in(1,2]$ and \eqref{f_p_convexity} holds.
	\end{itemize}
\end{itemize}
\end{theorem}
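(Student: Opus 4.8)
The plan is to assemble the statement from the classical sub/supersolution theory for Gelfand-type problems, adapted to the $p$-Laplacian as in \cite{GPP,San,CSa,CasSa}, and then to upgrade the resulting monotone limit to an energy solution by means of the uniform estimates that those works provide under \eqref{f_p_convexity}.

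For part (i), I would set $\lambda^*:=\sup\{\lambda>0:\ (1.19)_{\lambda,p}\text{ has a regular solution}\}$. To see $\lambda^*>0$, note that $\underline u\equiv 0$ is a strict subsolution since $f(0)>0$, while $\overline u:=(\lambda C_0)^{1/(p-1)}w$, with $w$ solving $-\plaplacian w=1$ in $\Omega$, $w=0$ on $\partial\Omega$, is a supersolution provided $C_0\ge f\big((\lambda C_0)^{1/(p-1)}\norm{w}_{L^\infty(\Omega)}\big)$, which holds for $\lambda$ small by continuity of $f$. A monotone iteration $-\plaplacian u_{k+1}=\lambda f(u_k)$ from $u_0=0$, combined with the weak comparison principle for the $p$-Laplacian, then yields an increasing sequence $0\le u_k\le\overline u$ converging to the minimal regular solution $u_\lambda$ (regularity inherited from $u_\lambda\le\overline u\in L^\infty(\Omega)$); using a solution for a slightly larger parameter as supersolution gives existence of $u_\lambda$ for all $\lambda\in(0,\lambda^*)$ and the monotonicity $\lambda\mapsto u_\lambda$. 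Finiteness of $\lambda^*$ follows from Picone's inequality: if $\varphi_1>0$ is a first eigenfunction of $-\plaplacian$ in $\Omega$ with eigenvalue $\lambda_1$, then for any regular solution $u$,
\[
\lambda\int_\Omega\frac{f(u)}{u^{p-1}}\,\varphi_1^p\,dx\le\int_\Omega\abs{\nabla u}^{p-2}\nabla u\cdot\nabla\!\left(\frac{\varphi_1^p}{u^{p-1}}\right)dx\le\int_\Omega\abs{\nabla\varphi_1}^p\,dx=\lambda_1\int_\Omega\varphi_1^p\,dx,
\]
and since $f(0)>0$ and $f$ is superlinear one has $c_0:=\inf_{t>0}f(t)/t^{p-1}>0$, whence $\lambda\le\lambda_1/c_0<\infty$. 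Finally, stability of $u_\lambda$ is the standard minimality argument: if the second variation \eqref{stability} were negative at $u_\lambda$ for some test function, one could construct a strict supersolution of $(1.19)_{\lambda,p}$ lying below $u_\lambda$, contradicting minimality --- see \cite{GPP,CSa}.

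For part (ii), the pointwise limit $u^*:=\lim_{\lambda\uparrow\lambda^*}u_\lambda$ exists by the monotonicity in (i), and $f(u_\lambda)\uparrow f(u^*)$ since $f$ is increasing. The entire content is a bound $\norm{u_\lambda}_{W^{1,p}(\Omega)}+\norm{f(u_\lambda)}_{L^1(\Omega)}\le C$ uniform in $\lambda<\lambda^*$: once it is available, one extracts a weak $W^{1,p}$ limit, passes to the limit in $\int_\Omega\abs{\nabla u_\lambda}^{p-2}\nabla u_\lambda\cdot\nabla\xi\,dx=\lambda\int_\Omega f(u_\lambda)\xi\,dx$ for $\xi\in C_c^1(\Omega)$ (the gradient term via the uniform bound together with the usual monotonicity argument for the $p$-Laplacian, the right-hand side via monotone convergence), and concludes that $u^*\in W^{1,p}(\Omega)$ is an energy solution of $(1.19)_{\lambda^*,p}$. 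The uniform estimate is exactly where \eqref{f_p_convexity} and the dichotomy $(\text{ii}_\text{a})$--$(\text{ii}_\text{b})$ enter: testing the equation against $u_\lambda$ gives $\int_\Omega\abs{\nabla u_\lambda}^p\,dx=\lambda\int_\Omega f(u_\lambda)u_\lambda\,dx$, and this is controlled by inserting into \eqref{stability} a test function built from $u_\lambda$ (or from a suitable $g(u_\lambda)$ adapted to the convexity of $(f-f(0))^{1/(p-1)}$, resp.\ of $f$) and absorbing terms. For $p\in(1,2]$ this goes through under \eqref{f_p_convexity} alone (\cite{San}); for $p>2$ the extra nonnegative term $(p-2)\abs{\nabla u_\lambda}^{p-4}(\nabla u_\lambda\cdot\nabla\xi)^2$ in \eqref{stability} forces either the boundary gradient estimates on strictly convex domains of \cite{CasSa} or the dimensional restriction $n<p+\frac{p^2}{p-1}$ of \cite{CSa}.

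The main obstacle is this last uniform $W^{1,p}$ bound for $p>2$: the quadratic form in \eqref{stability} is no longer comparable from above to $\int_\Omega\abs{\nabla u}^{p-2}\abs{\nabla\xi}^2\,dx$ in a way that decouples the radial and tangential parts of $\nabla\xi$, so the choice of test function and the absorption step are genuinely more delicate than for $p=2$, and the boundary contributions cannot be discarded unless $\partial\Omega$ is strictly convex. A secondary technical point is the stability-of-minimal-solutions step, where the usual implicit-function/eigenvalue-continuation argument is unavailable because the linearized operator degenerates on $\{\nabla u_\lambda=0\}$; it is replaced by the direct comparison argument recalled above.
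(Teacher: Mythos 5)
The paper itself offers no proof of this theorem: it is presented as a compilation of known results, with (i) attributed to \cite{CSa}, $(\text{ii}_\text{a})$ to \cite{CasSa} (strictly convex case) and \cite{San1} (dimensional restriction), and $(\text{ii}_\text{b})$ to \cite{San} ($p<2$) and \cite{CFRS} ($p=2$). Your sketch is a sound recapitulation of the standard line of argument in those references --- sub/supersolution iteration for the minimal branch, Picone's inequality to bound $\lambda^*$ from above, the comparison-based argument (in place of the degenerate implicit-function theorem) for stability of $u_\lambda$, and the uniform $W^{1,p}(\Omega)$ and $L^1(\Omega)$ estimate on $u_\lambda$ and $f(u_\lambda)$ followed by the Browder--Minty monotonicity argument to pass to the limit --- so it matches the approach the paper is implicitly relying upon. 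Two small citation slips that are worth flagging: the dimension-restricted alternative in $(\text{ii}_\text{a})$, $n<p+\frac{p^2}{p-1}$, is due to \cite{San1}, not \cite{CSa}; and the $p=2$ case of $(\text{ii}_\text{b})$ is attributed in the paper to \cite{CFRS}, whereas \cite{San} handles $p\in(1,2)$. Also, the first relation in your Picone chain is in fact an equality (it is just the weak formulation tested against $\varphi_1^p/u^{p-1}$). None of these affects the mathematical content of your outline.
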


Statement (i) is proved in \cite[Theorem 1.4 (i)]{CSa} (among other references), while statement  $(\text{ii}_\text{a})$ for strictly convex $\Omega$ was proved by Castorina and the third author in \cite[Theorem~1.5]{CasSa}. The other statement in $(\text{ii}_\text{a})$ follows instead from \cite[Theorem 1]{San1}. For $p<2$, $(\text{ii}_\text{b})$ is a consequence of \cite[Theorem 2]{San} --- note here that the constant $\tau_-$ in \cite{San} satisfies $\tau_-\geq0$ since $f$ is convex. Finally, for $p=2$ $(\text{ii}_\text{b})$ is a consequence of the article \cite{CFRS} that we are extending.

Concerning the classical case $p=2$, problem $(1.19)_{\lambda,2}$ was first systematically studied for some model nonlinearities by Crandall and Rabinowitz \cite{CR}. 
For general nonlinearities $f\in C^1$ satisfying \eqref{f_p_extremal_hp} with $p=2$, it is a classical result that $u_\lambda$ converges in $L^1(\Omega)$ to $u^*$, which is a 
	weak\footnote{In the sense introduced by Brezis et al.\cite{BetA}: $u\in L^1(\Omega)$ is a weak solution of $(1.19)_{\lambda,2}$ if $f(u)\text{dist}(\cdot,\partial\Omega)\in L^1(\Omega)$ and 
	\[
	\int_\Omega\left(u\Delta \varphi+\lambda f(u)\varphi\right)\,dx=0
	\]
	for every $\varphi\in C^2(\overline\Omega)$ with $\varphi = 0$ on $\partial\Omega$.} 
solution of $(1.19)_{\lambda^*,2}$ and it is called the \textit{extremal solution} of problem~$(1.19)_{\lambda,2}$. 
We refer to~\cite{D} for a complete introduction to the extremal problem for the Laplacian.

In \cite{B,BV} Brezis and Vázquez raised several open questions about extremal solutions for problem $(1.19)_{\lambda,2}$. 
Concerning the regularity of $u^*$, they asked whether extremal solutions are energy solutions in every dimension and whether they are 
bounded in ``low'' dimensions, for all convex nonlinearities satisfying \eqref{f_p_extremal_hp} with $p=2$ and all domains.

While several partial answers were established throughout the last 25 years, both questions have been completely answered very recently in \cite{CFRS}. Indeed, assuming that $f$ is nonnegative, nondecreasing, convex, and superlinear at infinity, in \cite{CFRS} the authors proved
that, in every $C^3$ bounded domain $\Omega$, $u^*\in W^{1,2}_0(\Omega)$ for every dimension $n$ and, moreover, $u^*$ is smooth if $n\leq9$.
This result is optimal, since explicit examples of unbounded extremal solutions are well-known for $n\geq10$.

For the extremal problem with the $p$-Laplacian, a weak notion of solution in the sense of Brezis et al.~\cite{BetA} is not available. For this reason, for $p\neq2$ it is unknown apriori if~$u^*$ solves~$(1.19)_{\lambda^*,p}$ in some weak sense, unless one assumes $(\text{ii}_\text{a})$ or $(\text{ii}_\text{b})$ in Theorem~\ref{thm_extremal_known}.
However, when $f$ is the exponential or a power-type nonlinearity, for every $p > 1$ it is known that $u^*$ is an energy solution in every dimension --- see \cite{GP,GPP,CSa}. 

The following is our main result concerning the boundedness of the extremal function $u^*$ in strictly convex domains. It follows from combining Corollary \ref{cor_convex} with the $L^1(\Omega)$ bounds for $u_\lambda$, uniform in $\lambda$, from \cite{San,San1}. When $p\geq2$, it gives the optimal dimension $n$.

\begin{theorem}\label{thm_extremal_new}
	Let $n$ and $p$ satisfy \eqref{dimension}, $f\in C^1([0,+\infty))$ satisfy \eqref{f_p_extremal_hp} and \eqref{f_p_convexity}, and $\Omega$ be a smooth bounded domain which is strictly convex. Then,
	$u^*$ is bounded and it is therefore a regular solution.
\end{theorem}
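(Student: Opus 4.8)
The plan is to derive Theorem~\ref{thm_extremal_new} as a rather direct consequence of Corollary~\ref{cor_convex}, applied uniformly along the minimal branch $\{u_\lambda\}_{\lambda\in(0,\lambda^*)}$ of Theorem~\ref{thm_extremal_known}, combined with the uniform $L^1(\Omega)$ control of this branch.

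First I would fix $\lambda\in(0,\lambda^*)$ and recall from Theorem~\ref{thm_extremal_known}(i) that $u_\lambda$ is a stable regular solution of the Dirichlet problem with nonlinearity $\lambda f$. By \eqref{f_p_extremal_hp} we have $f(0)>0$ and $f'>0$, so $\lambda f$ is positive on $[0,+\infty)$; extending it to a positive function of class $C^1(\R)$ --- the extension is immaterial since $u_\lambda>0$ in $\Omega$ --- Corollary~\ref{cor_convex} applies. As $\Omega$ is strictly convex and $n,p$ satisfy \eqref{dimension}, estimate \eqref{convex_bound} gives
\[
\norm{u_\lambda}_{L^\infty(\Omega)}\leq C\,\norm{u_\lambda}_{L^1(\Omega)},
\]
where $C$ depends only on $\Omega$ and $p$, and in particular is independent of $\lambda$ and of $f$. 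This uniformity is the feature that makes the argument close.

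Next I would show that $\sup_{0<\lambda<\lambda^*}\norm{u_\lambda}_{L^1(\Omega)}<+\infty$. Under hypotheses \eqref{f_p_extremal_hp} and \eqref{f_p_convexity}, this uniform $L^1$ bound is precisely what is established in \cite{San,San1}. (Alternatively, under the present hypotheses --- $p>2$ with $\Omega$ strictly convex, covered by $(\text{ii}_\text{a})$, or $p\in(1,2]$, covered by $(\text{ii}_\text{b})$ --- Theorem~\ref{thm_extremal_known}(ii) ensures that the increasing pointwise limit $u^*=\lim_{\lambda\uparrow\lambda^*}u_\lambda$ is an energy solution, hence $u^*\in W^{1,p}(\Omega)\subset L^1(\Omega)$; by monotone convergence $\norm{u_\lambda}_{L^1(\Omega)}\uparrow\norm{u^*}_{L^1(\Omega)}<+\infty$.) Combining with the previous step, $\norm{u_\lambda}_{L^\infty(\Omega)}\leq C\,\norm{u^*}_{L^1(\Omega)}$ for every $\lambda\in(0,\lambda^*)$.

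Finally I would let $\lambda\uparrow\lambda^*$: the monotone limit $u^*$ inherits $\norm{u^*}_{L^\infty(\Omega)}\leq C\,\norm{u^*}_{L^1(\Omega)}<+\infty$, so $u^*$ is bounded. Since $u^*$ is an energy solution of $(1.19)_{\lambda^*,p}$ by Theorem~\ref{thm_extremal_known}(ii) and $f\in C^1([0,+\infty))$, its boundedness forces $\lambda^*f(u^*)\in L^\infty(\Omega)$, so by definition $u^*$ is a regular solution; the classical regularity theory \cite{DiB,T,L} then upgrades it to $C^{1,\vartheta}(\overline{\Omega})$. I do not expect a genuine obstacle: the whole analytic difficulty is already contained in Corollary~\ref{cor_convex} --- whose $L^\infty$ estimate is, crucially, with a constant independent of $\lambda$ and $f$ --- and in Theorem~\ref{thm_extremal_known}. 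The only points to verify are that the hypotheses of Theorem~\ref{thm_extremal_new} fall within the range $(\text{ii}_\text{a})$--$(\text{ii}_\text{b})$ of Theorem~\ref{thm_extremal_known}, so that $u^*$ is a bona fide energy solution, and that $\lambda f$ meets the positivity requirement of Corollary~\ref{cor_convex}.
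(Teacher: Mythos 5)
Your proposal is correct and follows essentially the same route as the paper: apply Corollary~\ref{cor_convex} to each $u_\lambda$ (with a constant independent of $\lambda$), invoke the uniform $L^1(\Omega)$ bound on the minimal branch from \cite{San,San1}, and pass to the limit $\lambda\uparrow\lambda^*$. Your parenthetical alternative for the uniform $L^1$ bound via Theorem~\ref{thm_extremal_known}(ii) and monotone convergence, and your explicit remark about extending $f$ to a positive $C^1(\R)$ function before applying Corollary~\ref{cor_convex}, are both sound but do not change the argument in substance.
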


We recall that the boundedness of the extremal solution was previously known, under assumptions \eqref{f_p_extremal_hp}-\eqref{f_p_convexity} on $f$, in general smooth domains for dimensions $n$ satisfying \eqref{intro-Ned-San}. For $p\geq2$ and $\Omega$ strictly convex, this was improved in \cite{CasSa} to the condition $n\leq p+2$, and even further in \cite{Miraglio} to condition \eqref{dimMiraglio} when $n\geq4$. Our Theorem \ref{thm_extremal_new} improves these results, reaching the optimal dimension when $p\geq2$ and $\Omega$ is strictly convex.

Instead, when $p\in(1,2)$ the best result on the boundedness of extremal solutions was the one by the third author~\cite{San}, which gave condition~\eqref{intro-Ned-San}. This is improved by our result $n<5p$ only in strictly convex domains and when $p\in(\frac{7}{4},2)$.

Under the same hypotheses of Theorem 1.5 but without assuming the bound \eqref{dimension} on the dimension, Castorina and the third author proved in \cite[Theorem 1.5]{CasSa} that $u^*\in W^{1,p}(\Omega)$ if $p\geq2$. This was also known in nonconvex domains for $p\in(1,2)$ and every dimension $n$ by \cite[Theorem 2]{San}, and for $p\geq2$ and some dimensions $n$ by \cite[Theorem 1]{San1}. 
These three papers showed that $u^*$ belongs to $W^{1,p}(\Omega)$ by establishing $L^q(\Omega)$ bounds for $u_\lambda$ and $f(u_\lambda)$, with $\lambda<\lambda^*$, that are uniform in $\lambda$. Then, the range of exponents $q\geq1$ allowed in their bounds lets them prove, through the equation $-\plaplacian u_\lambda = \lambda f(u_\lambda)$, a $W^{1,p}(\Omega)$ bound for $u_\lambda$ uniform in $\lambda$. Our results provide an alternative proof that, in every dimension, $u^*$ belongs to $W^{1,p}(\Omega)$ when $\Omega$ is a strictly convex domain. To prove this, we will use Corollary \ref{cor_convex} in conjunction with the $L^1(\Omega)$ bounds for $u_\lambda$, uniform in $\lambda$, from \cite{San, San1}. The details will be given within the proof of Theorem \ref{thm_extremal_new}.

\subsection{Morrey and $L^q$ estimates}\label{subsec_high_dim}
In light of the results from the previous sections, one can naturally wonder about the regularity of stable solutions to \eqref{plap_eq} in higher dimensions, that is, when condition \eqref{dimension} on $n$ and $p$ does not hold. When the dimension $n$ is strictly larger than the critical value in the right-hand side of \eqref{dimension}, we obtain almost optimal integrability estimates in Morrey spaces.

Instead, the critical case when there is equality in \eqref{dimension} seems to be delicate. The point here is to find an appropriate test function to which one applies the results of Section~\ref{sec_stability}. For instance, a similar test function as the one used for $p=2$ in \cite{CFRS} works only for $2\leq p<2+\sqrt{3}$. It is not clear to us which test function one could use for $p \geq 2+\sqrt{3}$.

We recall that, for $q\geq1$ and $\beta\in(0,n]$, the Morrey norms are defined as
\[
\norm{w}^q_{M^{q,\beta}(\Omega)}:=\sup_{\,x_0\in \overline{\Omega}, \,\rho>0}\,\rho^{\beta-n}\int_{\Omega\cap B_{\rho}(x_0)}\abs{w}^q\,dx.
\]
We state our result in the following theorem. A stronger version of it, showing also Morrey estimates for $\nabla u$, is given in Theorem \ref{thm_big_n_morrey}. Note that for the first statement in both results, we do not need $f$ to be nonnegative.

\begin{theorem}\label{thm_big_n}
	Let $u$ be a stable regular solution of 
	$-\Delta_pu=f(u)$ in $B_1\subset\R^n$, with $ f\in C^1(\R)$.
	Assume that
	\begin{equation}\label{big_dimension}
	\begin{cases}
	n> p+\frac{4p}{p-1}	\qquad &\text{for}\quad p\geq2, \\
	n> 5p		\qquad&\text{for}\quad p\in(1,2),
	\end{cases}
	\end{equation}
	and define
	\begin{equation*}
	q_n:=\begin{cases}
	\begin{aligned}
	& p+\frac{p^2}{n-2-2\sqrt{\frac{n-1}{p-1}}-p} 	&\text{if}\,\,\, n> p+\frac{4p}{p-1} \,\,\, \text{and}\,\,\, p\geq2,		\\
	& p+\frac{p^2}{n-2(p-1)-2\sqrt{(p-1)^2+n-p}-p} 	&\text{if}\,\,\, n> 5p \,\,\, \text{and}\,\,\, p\in(1,2).
	\end{aligned}
	\end{cases}
	\end{equation*}
	
	Then,
	\begin{equation}\label{estimate_big_n}
	\norm{u}_{M^{q,p+\frac{p^2}{q-p}}(B_{1/2})}
	\leq C\norm{\nabla u}_{L^p(B_{1})} \qquad\text{for every}\quad q\in(p,q_n),
	\end{equation} 
	where $ C$ depends only on $n$, $p$, and $q$. In addition, if $f$ is nonnegative then $\norm{\nabla u}_{L^p(B_{1})}$ can be replaced by $\norm{u}_{L^1(B_{1})}$ in the right-hand side of \eqref{estimate_big_n}.
\end{theorem}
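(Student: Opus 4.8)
The plan is to derive the Morrey estimate \eqref{estimate_big_n} by combining the key stability estimate of Lemma~\ref{lemma_stability} (which provides a weighted control of the radial derivative of $u$ in terms of a cut-off on a larger ball) with the doubling-type Lemma~\ref{lemma_doubling} (which upgrades a bound on the weighted radial derivative into an $L^p$ bound on the full gradient). First I would reduce the Morrey statement to a decay estimate on balls: to bound $\norm{u}_{M^{q,\beta}(B_{1/2})}$ with $\beta = p + p^2/(q-p)$ it suffices to show, for every $x_0 \in \overline{B}_{1/2}$ and $0<\rho<1/4$, that $\int_{B_\rho(x_0)} |u|^q\,dx \le C\rho^{n-\beta}\norm{\nabla u}_{L^p(B_1)}^q$. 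By a covering and scaling argument centered at $x_0$, and translating so $x_0=0$, this follows from a Caccioppoli/iteration scheme on the rescaled solution $u_\rho(x) := \rho^{-\theta} u(\rho x)$ for the appropriate homogeneity exponent $\theta$ dictated by $\beta$; the $p$-Laplacian scales so that $u_\rho$ solves $-\Delta_p u_\rho = \rho^{\theta(p-1)+p} f(u(\rho\,\cdot))$, and stability is scale-invariant.

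The analytic heart is the same pointwise inequality that drives the proof of Theorem~\ref{thm_interior} in the optimal range: applying the test function \eqref{first-test} in the stability inequality \eqref{stability} and running the computation of Lemma~\ref{lemma_stability} yields an inequality of the form
\[
\int_{B_{r}} |\nabla u|^{p}\,dx \;\le\; C\, r^{n - p - \frac{p^2}{q-p}}\,\Big(\int_{B_{R}} |\nabla u|^{p}\,dx\Big)^{?} \;+\; \text{(lower order)},
\]
valid for $r<R$, where the exponent on the dimension encodes precisely the algebraic condition defining $q_n$. The role of the hypothesis $q<q_n$ is to guarantee that the exponent
\[
n - 2 - 2\sqrt{\tfrac{n-1}{p-1}} - p \quad(\text{resp. } n - 2(p-1) - 2\sqrt{(p-1)^2+n-p} - p)
\]
appearing in the denominator of $q_n$ is positive and strictly larger than what is needed, so that the Morrey exponent $\beta = p+p^2/(q-p)$ stays in the admissible range $(p,n)$ and the iteration converges. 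Concretely, I would set up a geometric-decay lemma: if $\varphi(r) := \int_{B_r} |\nabla u|^p$ satisfies $\varphi(\tau r) \le C\tau^{\,n-\beta}\varphi(r)$ for all $r$ small and a fixed $\tau\in(0,1)$, then $\varphi(r) \le C r^{n-\beta}\varphi(1)$, and then convert this gradient decay into the $L^q$ decay of $u$ itself via the Sobolev--Poincaré inequality on each ball (using $|u - \ave_{B_r} u|$ and absorbing the mean value through the decay of the gradient, exactly as one does for the classical Morrey/Campanato embedding, noting $q = p^\ast$-type exponent is compatible with $\beta$). The final sentence of the theorem, replacing $\norm{\nabla u}_{L^p(B_1)}$ by $\norm{u}_{L^1(B_1)}$ when $f\ge 0$, is then immediate from the already-proven estimate \eqref{L1controls_p} of Theorem~\ref{thm_interior}.

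The main obstacle, as the authors themselves flag, is the degeneracy of the $p$-Laplacian on the critical set $\{|\nabla u| = 0\}$: the test function in Lemma~\ref{lemma_stability} carries the extra cut-off factor $\phi(|\nabla u|/\varepsilon)$, and the Morrey iteration must be performed uniformly in $\varepsilon$ before passing to the limit $\varepsilon\to 0$. This requires the weighted Hessian bound \eqref{firstDS} (and, where the $L^1$ form of the right-hand side is wanted, the stronger Theorem~\ref{thm_higher_int}) to control the error terms produced by differentiating $\phi(|\nabla u|/\varepsilon)$, showing that they vanish as $\varepsilon\to 0$ on every fixed ball. A secondary subtlety is bookkeeping the scaling: one must check that the homogeneity exponent $\theta$ chosen to make the Morrey norm scale-invariant is consistent with the exponent that appears naturally from the stability computation — this is exactly the computation that produces the formula for $q_n$, and it is where the distinction between $p\ge 2$ and $p\in(1,2)$ enters, because the two ranges lead to different algebraic expressions (the $\sqrt{(n-1)/(p-1)}$ versus $\sqrt{(p-1)^2+n-p}$ terms) for the admissible decay rate.
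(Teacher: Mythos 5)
There is a genuine conceptual gap at the very first step. Your plan invokes Lemma~\ref{lemma_stability}, but that lemma is proved \emph{under} hypothesis~\eqref{dimension}, which is exactly the opposite of the present hypothesis~\eqref{big_dimension}: its conclusion~\eqref{stability_estimate} comes from choosing $a=n-p$ in Lemma~\ref{lemma_stab5} and then merging the integrals, and under~\eqref{big_dimension} the resulting constant has the wrong sign, so~\eqref{stability_estimate} simply does not hold. You also propose to combine this with Lemma~\ref{lemma_doubling} and run a hole-filling iteration, as in the proof of the H\"older estimate~\eqref{C.alfa}. But the paper does none of that in the supercritical case: the argument is much shorter. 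One goes back to Lemma~\ref{lemma_stab5} with a \emph{different} choice of $a<n$ (the range is~\eqref{a_condition}), in which the coefficient of $\int_{B_\rho}r^{-a}\abs{\nabla u}^{p-2}u_r^2\,dx$ is negative and that of $\int_{B_\rho}r^{-a}\abs{\nabla u}^{p}\,dx$ is positive after merging via $u_r^2\le\abs{\nabla u}^2$; this yields directly, with no iteration and no compactness lemma, $\int_{B_{1/4}}r^{-a}\abs{\nabla u}^p\,dx\le C\norm{\nabla u}_{L^p(B_1)}^p$, which after translating the origin over $\overline{B}_{1/4}$ is precisely the Morrey bound $\nabla u\in M^{p,\beta}(B_{1/4})$ with $\beta=n-a$, and the condition $\beta>\overline\beta$ is exactly the admissible window for $a$ in~\eqref{a_condition}. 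So the geometric-decay lemma and the doubling assumption you set up are superfluous machinery here, and the lemma you start from is unavailable.

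The second issue is the conversion from the gradient Morrey bound to the bound on $u$ itself. You suggest a ball-by-ball Sobolev--Poincar\'e argument controlling $u-\ave_{B_r}u$ and ``absorbing the mean value through the decay of the gradient.'' This is delicate to carry out correctly at the sharp exponent $q=p+\frac{p^2}{\beta-p}$: the Campanato-type telescoping that controls the oscillation of the averages across scales gives H\"older/BMO-type conclusions, not directly the $L^q$-Morrey norm, unless you are careful. The paper sidesteps this entirely by cutting off $u$ to have compact support in $B_{1/4}$ and invoking the Adams/Riesz-potential embedding $M^{p,\beta}\ni\nabla u\Rightarrow u\in M^{\frac{p\beta}{\beta-p},\beta}$ (Proposition~3.1 and Theorems~3.1--3.2 of~\cite{A}, also \cite{CCh}). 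Finally, your worry about passing to the limit $\varepsilon\to0$ uniformly along a Morrey iteration is a non-issue: the $\varepsilon$-limit is already taken once and for all within the proofs of Lemmas~\ref{lemma_stab1}--\ref{lemma_stab5} in Section~\ref{sec_stability}, and the estimate~\eqref{stab5} that one applies here is $\varepsilon$-free. Your last remark (using~\eqref{L1controls_p} to replace $\norm{\nabla u}_{L^p(B_1)}$ by $\norm{u}_{L^1(B_1)}$ when $f\ge0$) is correct and matches the paper.
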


For $p\geq2$, the result is almost optimal. Indeed, when
$n> p+\frac{4p}{p-1}$, in \cite{CSa,F} it is proved that the function $ u(x)=\abs{x}^{-p/(m-(p-1))}-1 $ is the extremal solution of
\begin{equation*}
\left\{
\begin{array}{rcll}
-\plaplacian u&=& \lambda^*(1+u)^m & \qquad \text{in}\,\,B_1 
\\
u&>&0 & \qquad\text{in}\,\,B_1
\\
u&=&0 & \qquad\text{on}\,\,\partial B_1,
\\
\end{array}\right.
\end{equation*}
with $m=\frac{n(p-1)-2\sqrt{(n-1)(p-1)}+2-p}{n-(p+2)-2\sqrt{(n-1)/(p-1)}}$. One can easily show that $ u\in M^{q,p+\frac{p^2}{q-p}}(B_{1/2}) $ if and only if $q\leq m+1=q_n$. 
The validity of \eqref{estimate_big_n} with $q=q_n$ for a general stable solution remains an open question, even for $p=2$.

\subsection{Remaining open questions} \label{subsec_open} 
In view of the results provided by~\cite{CFRS} for $p=2$, the following are the main problems which remain to be answered for general $p$. First, 
the extension of the boundary regularity for stable solutions developed in \cite{CFRS} --- a question that we have not attacked. Second, for $p\in(1,2)$, is $u^*$ bounded whenever $n<p+4p/(p-1)$? Regarding this question, proving boundedness in the interior of the domain would be already extremely interesting.
Third, as explained in the beginning of the previous Subsection \ref{subsec_high_dim}, the critical dimension in which there is equality in \eqref{dimension} remains largely open.

Finally, even for $p=2$ it is not clear if the nonnegativeness of the nonlinearity is needed for the boundedness results. For instance, from estimate \eqref{intro_stability} below with $\rho=1/2$ (which holds under no sign assumption on $f$), one can deduce BMO regularity for the stable solution. At the same time, the optimal boundedness result of \cite{CCaSa} in the radial case --- as well as several boundedness results in the nonradial setting and for nonoptimal dimension ranges \cite{C,C1,CasSa,Miraglio,N,San,San1} --- hold for every nonlinearity, independently of its sign.

\subsection{Main ingredients in the proofs}\label{subsec_ideas} Throughout the paper, we denote by
$$
r=r(x)=\abs{x} \quad\text{ and }\quad u_r=u_{r}(x)=\frac{x}{\abs{x}}\cdot\nabla u
$$ 
the modulus of $x$ and the radial derivative of $u$.

A key tool towards Theorem \ref{thm_interior} is the first choice\footnote{We will see here below that $\xi=\abs{\nabla u}\eta$, with $\eta$ a cut-off function, is a second important choice that we also need to use.} of the test function $\xi$ in the stability inequality \eqref{stability}. We take
\begin{equation}\label{first-test}
\xi:=\left(x\cdot\nabla u\right)r^{\frac{p-n}{2}}\phi(|\nabla u|/\varepsilon)\,\zeta,
\end{equation}
where $\phi$ and $\zeta$ are cut-off functions with $\phi(t)=0$ for $t\leq 1$, $\phi(t)=1$ for $t\geq2$, and $\zeta\in C_c^\infty(B_{3\rho/2})$ with $\zeta\equiv1$ in $B_\rho$, where $\rho<2/3$.
The presence of the cut-off $\phi$ is a first difference with the analysis in~\cite{CFRS} for $p=2$, where there was no need to use it. The reason to include it here is to avoid the set $\{ \nabla u=0\}$, where the ellipticity of the equation degenerates. Indeed, we will need to compute two weak derivatives of $\xi$, that is, three weak derivatives of $u$, which we know to exist in the regular set $\{ |\nabla u|>0\}$. In addition, for errors to be small, we will need the important regularity result \eqref{firstDS} of \cite{DS}, stating that $\nabla u \in W^{1,2}_{\sigma,{\rm loc}}(B_1)$.

Plugging $\xi$ in \eqref{stability} --- see the series of lemmata in Section \ref{sec_stability} --- we will obtain
\begin{equation}\label{stab_intro}
\begin{split}
&\hspace{-0.5cm}\left(p(n-p)-\left(\frac{n-p}{2}\right)^2\right)\int_{B_\rho}r^{p-n}\abs{\nabla u}^{p-2}u_{r}^2\,dx
\\
&\hspace{1cm}-(p-2)\left(\frac{n-p}{2}\right)^2\int_{B_\rho}r^{p-n}\abs{\nabla u}^{p-4}u_{r}^4\,dx \leq C\int_{B_{3\rho/2}\setminus B_\rho}r^{p-n}\abs{\nabla u}^p\,dx.
\end{split}
\end{equation}
The constant in front of the first integral is positive whenever $p<n<5p$.
When $p\geq2$, this condition on $n$ is not restrictive, since in this case $p+4p/(p-1)\leq5p$. 
Also for $p\geq2$, the second term in the left-hand side of \eqref{stab_intro} is nonpositive and we can merge it with the first one by using $u_{r}^2\leq\abs{\nabla u}^2$.
In this way, we obtain a constant in front of the resulting integral in the left-hand side which is positive whenever $p<n<p+4p/(p-1)$. Hence, under this assumption on $n$ and if $p\geq2$, from \eqref{stab_intro} we obtain
\begin{equation}\label{intro_stability}
\int_{B_\rho}r^{p-n}\abs{\nabla u}^{p-2}u_r^2\,dx \leq C \rho^{p-n}\int_{B_{3\rho/2}\setminus B_\rho}\abs{\nabla u}^p\,dx.
\end{equation}
However, the requirement $p<n$ will not be needed in our results, thanks to the following:
\begin{remark}\label{artificial}
When $n\leq p$, we can add additional artificial variables and consider
the solution $u$ to be defined in a ball of a greater dimensional Euclidean space, of dimension larger than $p$.
The key point here is that this procedure preserves the stability condition~\eqref{stability}. For this, one uses Fubini's theorem and that $|\nabla_{x}\xi |^2 \leq |\nabla_{\bar{x}} \xi |^2$, where $x\in\R^n$ while $\bar{x}$ denotes the variables in the larger dimensional
space. However, since in \eqref{stability} $p-2$ could be negative, it is important to notice that $\nabla_x u\cdot\nabla_x\xi=\nabla_{\bar{x}}u\cdot\nabla_{\bar{x}}\xi$.
This will be the case --- even though we will surely have $\nabla_x\xi\neq\nabla_{\bar{x}}\xi$ --- since $u$ only depends on the $x$-variables. Finally, note that this procedure works since our interior estimates do not rely on any boundary datum.	
\end{remark}

In contrast, when $p\in(1,2)$, we have that $5p<p+4p/(p-1)$, and in order to have a positive constant in front of the first integral in \eqref{stab_intro} we need to assume the more restrictive assumption $n<5p$. Notice here that when $p\in(1,2)$ we cannot use $u_{r}^2\leq\abs{\nabla u}^2$ on the second term in \eqref{stab_intro} in order to merge it with the first one, since this second term is positive. In conclusion, when $p\in(1,2)$ we simply neglect the second term in the left-hand side of \eqref{stab_intro}, losing information.

At this point, it is important to point out that there exist unbounded energy solutions to~\eqref{plap_eq} which satisfy inequality~\eqref{intro_stability} for all $\rho$ small enough. An explicit example is provided in the case $p=2$, $n=3$ in \cite[Remark 2.2]{CFRS}. This means that we still need to use stability in a crucial way in order to prove our results.

To do this, we follow a key idea in \cite{CFRS}, which consists of showing that the weighted\footnote{At scale $\rho=1$, the weight is $\abs{\nabla u}^{p-2}$, as in the left-hand side of \eqref{intro_stability}. In the classical case $p=2$, such weight did not appear, obviously.} $L^2$-norm of the radial derivative $u_r$ is comparable to the $L^p$-norm of the full gradient at every scale, at least assuming a doubling assumption on the $L^p$ norm of the gradient. This is the content of Lemma~\ref{lemma_doubling}. The proof of this fact is based on a delicate compactness argument, which relies on new apriori estimates that we explain below.
Once Lemma \ref{lemma_doubling} is available, from \eqref{intro_stability} we deduce that
\[
\int_{B_\rho}r^{p-n}\abs{\nabla u}^{p-2}u_r^2\,dx
\leq
C\int_{B_{3\rho/2}\setminus B_\rho}r^{p-n}\abs{\nabla u}^{p-2}u_r^2\,dx
\]
and, from this, with a hole-filling and iteration argument we conclude that $u$ is $C^\alpha$ in the interior.

The compactness argument in the proof of Lemma \ref{lemma_doubling} turns out to be more delicate than for the case $p=2$ in \cite{CFRS}. Indeed, the Newtonian potential --- and hence the linearity of the Laplace operator --- was used in \cite{CFRS} in an important way to get compactness. Instead, our proof will rely on the following three new estimates for a stable regular solution:
\begin{equation}\label{intro_Lp_controls_IIder_p>2}
\int_{B_{1/2}}\abs{\nabla u}^{p-2}\lvert D^2u\rvert\,dx\leq C\norm{\nabla u}_{L^p(B_1)}^{p-1} \qquad \text{if}\,\,\,p\geq2,
\end{equation}
\begin{equation}\label{intro_Lp_controls_IIder_p<2}
\int_{B_{1/2}}\lvert D^2u\rvert\,dx\leq C\norm{\nabla u}_{L^p(B_1)} \qquad \text{if}\,\,\,p\in(1,2),
\end{equation}
and
\begin{equation}\label{intro_Lp_controls_p+gamma}
\norm{\nabla u}_{L^{p+\gamma}(B_{1/2})} \leq C \norm{\nabla u}_{L^p(B_1)},
\end{equation}
where $\gamma$ and $C$ are positive constants depending only on $n$ and $p$. Note that these estimates are a first step towards the ones in our main results --- \eqref{intro_IIder_p>2}, \eqref{intro_IIder_p<2}, and \eqref{L1controls_p}, respectively --- which have $\norm{u}_{L^1(B_1)}$ in their right-hand sides instead of $\norm{\nabla u}_{L^p(B_1)}$.

A fundamental tool in obtaining estimates \eqref{intro_Lp_controls_IIder_p>2}, \eqref{intro_Lp_controls_IIder_p<2}, and \eqref{intro_Lp_controls_p+gamma} is a geometric form of the stability condition that was proved by Farina, Sciunzi, and Valdinoci \cite{FSV,FSV1} and is reported in the following theorem. It extends to the $p$-Laplacian the well-known inequality of Sternberg and Zumbrun for stable solutions to semilinear equation.

\begin{theorem}[Farina, Sciunzi, and Valdinoci \cite{FSV, FSV1}]\label{thm_SZ_stability}
	Let $ p\in(1,+\infty)$ and $f\in C^1(\R)$. For every stable regular solution $ u $ of $ -\Delta_p u=f(u) $ in $ \Omega\subset\R^n $ and for every $\eta\in C^1_c(\Omega)$, it holds that \footnote{In the previous and printed version of this article, the integral in the right-hand side of \eqref{SZ_formula} was $(p-1)\int_{\Omega}\abs{\nabla u}^{p} \abs{\nabla \eta}^2 \,dx$ and thus incorrect ---the reason being that for $p=1$, it cannot vanish identically. The correct one, above, is taken from \cite[Theorem 2.5]{FSV1}. However, this correction does not affect the validity of all our arguments in the previous and printed versions of the paper, since we only used \eqref{SZ_formula} to bound its right-hand side by the quantity $C\int_{\Omega}\abs{\nabla u}^{p} \abs{\nabla \eta}^2 \,dx$ with $C$ a constant depending only on $n$ and $p$. This is a correct bound that still holds independently of the error.}
	\begin{equation}\label{SZ_formula}
	\begin{split}
	&(p-1)\int_{\Omega\cap\{|\nabla u|>0\}}\abs{\nabla u}^{p-2}\abs{\nabla_T\abs{\nabla u}}^2\eta^2\,dx+\int_{\Omega\cap\{|\nabla u|>0\}}\abs{A}^2\abs{\nabla u}^p \eta^2\,dx\\ 
	&\hspace{2cm} 	
	\leq \int_{\Omega}\abs{\nabla u}^{p-2}\left\{ |\nabla u|^2 \abs{\nabla \eta}^2+(p-2) (\nabla u \cdot\nabla \eta)^2\right\}    \,dx.
	\end{split} 
	\end{equation}
\end{theorem}
Here $\nabla_T$ is the tangential gradient to the level set of $u$ passing through a given point $x$ and 
$\abs{A}^2:=\sum_{i=1}^{n-1}\kappa^2_i$ is the second fundamental form of the level set; $\kappa_i$ are the $n-1$ principal curvatures of the level set of $u$. 
These objects are well defined in $\{|\nabla u|>0\}$, since here the equation $-\plaplacian u=f(u)$ is uniformly elliptic and therefore every regular solution is of class $C^2$ in this set --- see Remark~\ref{rmk_integrability} below. 
In particular, for $x\in \Omega\cap\{\abs{\nabla u}>0\}$, the level set of $u$ passing through $x$ is a $C^2$ embedded hypersurface of $\R^n$.

The proof of Theorem \ref{thm_SZ_stability} relies on the choice of test function 
\[\xi=\abs{\nabla u}\eta\]
in the stability inequality \eqref{stability}, where $\eta$ is the $C^1_c(\Omega)$ function appearing in \eqref{SZ_formula}.

Using Theorem \ref{thm_SZ_stability} we establish the interior estimates \eqref{intro_Lp_controls_IIder_p>2}, \eqref{intro_Lp_controls_IIder_p<2}, and \eqref{intro_Lp_controls_p+gamma} above. 
The proofs of~\eqref{intro_Lp_controls_IIder_p>2}-\eqref{intro_Lp_controls_IIder_p<2}, contained in Lemma~\ref{lemma_II_der}, and the one of~\eqref{intro_Lp_controls_p+gamma}, contained in Lemma~\ref{lemma_Lpgamma}, are independent and do not rely on each other. 
Moreover, we prove the higher integrability $W^{1,p+\gamma}$ estimate in two different ways. The first proof, presented in Section~\ref{sec_higher}, is based on a similar argument to~\cite[Proposition 2.4]{CFRS}, controlling first the $L^1$-norm of the $(p+1)$-Laplacian of~$u$ --- this will be estimate \eqref{pplus1} in Theorem \ref{thm_interior} still with $\norm{u}_{L^1(B_1)}$ replaced by $\norm{\nabla u}_{L^p(B_1)}$ on its right-hand side. The second proof of the higher integrability, presented in Appendix \ref{appendix_alt_proof}, relies instead on the Sobolev inequality of Michael-Simon and Allard and we present it simply as an interesting alternative method to obtain this type of estimates. Both proofs give explicit values of $\gamma$, depending only on $n$ and $p$, that we collect in Remark \ref{rmk_gamma}. However, for the scope of the present paper, we only need \eqref{L1controls_p} for some $\gamma>0$.

Improving our H\"older estimate, as well as the bounds \eqref{intro_Lp_controls_IIder_p>2}-\eqref{intro_Lp_controls_IIder_p<2}-\eqref{intro_Lp_controls_p+gamma}, by replacing $\norm{\nabla u}_{L^p(B_1)}$ by $\norm{u}_{L^1(B_1)}$ turns out to be more delicate than in the linear case $p=2$ --- but still based on an interpolation inequality. For this, we establish a new interpolation result adapted to the $p$-Laplacian, Proposition \ref{prop5.2}, involving the $L^1$-norm of $D^2u$ weighted by $\abs{\nabla u}^{p-2}$. It holds for $p\geq2$ and all regular enough functions $u$.
For $p\in(1,2)$ instead, we will use a classical interpolation inequality, as stated, e.g., in \cite[Theorem 7.28]{GT}.

Finally, the proof of Theorem \ref{thm_big_n} in higher dimensions is obtained by choosing $\xi=(x\cdot\nabla u)r^{-a/2}\phi(|\nabla u|/\varepsilon)\zeta$ as test function in the stability inequality \eqref{stability}, for appropriate exponents $a>0$.

\subsection{Structure of the paper} In Section~\ref{sec_stability} we use stability with a specific test function in order to prove the key Lemma~\ref{lemma_stability}. In Section~\ref{sec_higher} we prove higher integrability estimates in the interior, i.e., \eqref{intro_Lp_controls_IIder_p>2}, \eqref{intro_Lp_controls_IIder_p<2}, and \eqref{intro_Lp_controls_p+gamma}. In Section~\ref{sec_doubling} we establish the control of the full gradient by the weighted radial derivative, Lemma~\ref{lemma_doubling}. In Section~\ref{sec_interior} we prove Theorem~\ref{thm_interior}, our global result in convex domains (Corollary~\ref{cor_convex}), and Theorem~\ref{thm_extremal_new} on extremal solutions. Finally, Section~\ref{sec_big_n} deals with the case of higher dimensions, proving Theorem~\ref{thm_big_n}.
We collect in the appendices some technical lemmata and an alternative proof of the higher integrability estimate of Lemma \ref{lemma_Lpgamma} which uses the Michael-Simon and Allard inequality.

\section{The key lemma}\label{sec_stability}
This section is devoted to the proof of the following lemma, an interior estimate for stable solutions which holds for every $C^1$ nonlinearity, not depending on its sign. 
We establish it using the stability condition \eqref{stability} with test function 
\[\xi=(x\cdot\nabla u)\,r^{(p-n)/2}\phi(\abs{\nabla u}/\e)\zeta,\]
where $r=\abs{x}$, $0\leq\phi\leq1$, $\phi(t)=0$ if $t\leq1$, $\phi(t)=1$ if $t\geq2$, and $\zeta\in C_c^\infty(B_1)$ is a cut-off function. 

For every $ C^1 $ function $ \varphi $ we use the following notation for its radial derivative:
\[
\varphi_r=\varphi_r(x)=\frac{x}{\abs{x}}\cdot\nabla \varphi(x).
\]

\begin{lemma}\label{lemma_stability}
	Assume that $n>p$, and that $n$ and $p$ satisfy \eqref{dimension}. Let $u$ be a stable regular solution of $-\Delta_p u=f(u)$ in $B_1\subset\R^n$, with $f\in C^1(\R)$.
	
	Then, for every $\rho\in(0,2/3) $ we have
	\begin{equation}\label{stability_estimate}
	\int_{B_\rho}r^{p-n}\abs{\nabla u}^{p-2}u_r^2\,dx \leq C \rho^{p-n}\int_{B_{3\rho/2}\setminus B_\rho}\abs{\nabla u}^p\,dx,
	\end{equation}
	where $C$ is a constant depending only on $n$ and $p$.
\end{lemma}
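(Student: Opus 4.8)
The plan is to substitute the test function $\xi=(x\cdot\nabla u)\,r^{(p-n)/2}\phi(\abs{\nabla u}/\e)\zeta$ into the stability inequality \eqref{stability}, compute everything carefully on the open set $\{\abs{\nabla u}>\e\}$ where the equation is uniformly elliptic (so $u\in C^2$ and, by \eqref{firstDS}, $\nabla u\in W^{1,2}_{\sigma,\mathrm{loc}}$), and then let $\e\to0$ and $\zeta\uparrow$ the right cut-off to recover \eqref{stab_intro}, from which \eqref{stability_estimate} follows by the algebra sketched in Subsection~\ref{subsec_ideas}. First I would record the identity $-\Delta_p u=f(u)$ and differentiate it: for each $k$, the function $\partial_k u$ solves the linearized equation $-\diver\!\big(\abs{\nabla u}^{p-2}\nabla(\partial_k u)+(p-2)\abs{\nabla u}^{p-4}(\nabla u\cdot\nabla\partial_k u)\nabla u\big)=f'(u)\partial_k u$ in $\{\abs{\nabla u}>0\}$. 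Testing this against $(x\cdot\nabla u)\,r^{p-n}\phi^2\zeta^2$-type functions and summing over $k$ is the standard route (à la the Sternberg--Zumbrun/Farina--Sciunzi--Valdinoci computation, and à la \cite{CFRS} for $p=2$) to relate $\int f'(u)\xi^2$ to the weighted Dirichlet-type terms appearing in \eqref{stability}. The geometric point is that $x\cdot\nabla u$ is, up to the $p$-Laplacian structure, an approximate solution of the linearized equation because of the scaling/dilation invariance of $-\Delta_p$; this is exactly what produces the numerology $p(n-p)-((n-p)/2)^2$ and $-(p-2)((n-p)/2)^2$ in \eqref{stab_intro}.

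The computation splits into three groups of terms. The leading terms come from differentiating the factor $(x\cdot\nabla u)\,r^{(p-n)/2}$, and these, after an integration by parts using the equation and the dilation identity $\Delta_p(x\cdot\nabla u)=p\,f(u)+(x\cdot\nabla)f(u)$ on $\{\abs{\nabla u}>0\}$ (interpreted in the appropriate weighted weak sense), produce the two good negative-definite bulk terms on the left of \eqref{stab_intro} and, crucially, a term in which $f'(u)$ cancels against the one in \eqref{stability}. The second group consists of terms where at least one derivative falls on $\zeta$; since $\nabla\zeta$ is supported in $B_{3\rho/2}\setminus B_\rho$ and $\abs{\nabla\zeta}\le C/\rho$, Cauchy--Schwarz and $r\sim\rho$ there bound these by the right-hand side $C\rho^{p-n}\int_{B_{3\rho/2}\setminus B_\rho}\abs{\nabla u}^p\,dx$; one must be slightly careful to absorb the cross terms between the $\zeta$-derivatives and the bulk terms using Young's inequality with a small parameter. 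The third group consists of the terms where a derivative falls on $\phi(\abs{\nabla u}/\e)$, i.e.\ involving $\phi'(\abs{\nabla u}/\e)\,\e^{-1}\nabla\abs{\nabla u}$; here $\phi'$ is supported in $\{\e\le\abs{\nabla u}\le2\e\}$, so these error terms are controlled by $\e^{-2}\int_{\{\e\le\abs{\nabla u}\le 2\e\}}\abs{\nabla u}^{p-2}\abs{D^2u}^2\,r^{p-n}\zeta^2\,dx\cdot\e^{2}\lesssim \int_{\{\abs{\nabla u}\le 2\e\}}\abs{\nabla u}^{p-2}\abs{D^2 u}^2\,dx$ (times a $\rho$-dependent constant), which tends to $0$ as $\e\to0$ by dominated convergence, precisely because of the Damascelli--Sciunzi bound \eqref{firstDS}. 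Similarly the bulk integrands are $\abs{\nabla u}^{p-2}u_r^2 r^{p-n}$ and $\abs{\nabla u}^{p-4}u_r^4 r^{p-n}\le\abs{\nabla u}^{p-2}u_r^2 r^{p-n}$, which are integrable and one recovers the full-ball integrals by monotone/dominated convergence as $\phi(\abs{\nabla u}/\e)\uparrow\mathbf 1_{\{\abs{\nabla u}>0\}}$, invoking Remark~\ref{rmk_integrability} to replace $\{\abs{\nabla u}>0\}$ by $B_\rho$.

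Once \eqref{stab_intro} is in hand, the final step is elementary. For $p\ge2$: the second bulk term is $\le0$, so dropping it and using $u_r^2\le\abs{\nabla u}^2$ is \emph{not} what we do — rather, note $-\Delta_p$-algebra gives $\abs{\nabla u}^{p-4}u_r^4\le\abs{\nabla u}^{p-2}u_r^2$, so
\[
\Big(p(n-p)-\big(\tfrac{n-p}{2}\big)^2-(p-2)\big(\tfrac{n-p}{2}\big)^2\Big)\int_{B_\rho}r^{p-n}\abs{\nabla u}^{p-2}u_r^2\,dx\le C\rho^{p-n}\int_{B_{3\rho/2}\setminus B_\rho}\abs{\nabla u}^p\,dx,
\]
and one checks the coefficient equals $(n-p)\big(p-\tfrac{(p-1)(n-p)}{2}\big)$, which is strictly positive exactly when $p<n<p+\tfrac{4p}{p-1}$. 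For $p\in(1,2)$ the second bulk term is $\ge0$, so we simply discard it, keeping only the first; its coefficient $p(n-p)-\big(\tfrac{n-p}{2}\big)^2=(n-p)\big(p-\tfrac{n-p}{4}\big)$ is positive precisely when $p<n<5p$. Either way, dividing by the (positive, $n,p$-dependent) coefficient yields \eqref{stability_estimate}.

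The main obstacle is the integration-by-parts computation of the leading terms with full rigor: one is forced to work on $\{\abs{\nabla u}>\e\}$, where $u$ is only $C^2$ and not $C^3$, so third derivatives of $u$ (equivalently, second derivatives of $\xi$) must be handled in the $W^{1,2}_{\sigma}$ sense, and the boundary of $\{\abs{\nabla u}>\e\}$ is not smooth. I expect to circumvent this by first doing the computation formally as if $u$ were smooth — legitimately, since on $\{\abs{\nabla u}>\e\}$ the equation is uniformly elliptic and elliptic regularity upgrades $u$ to $C^2$, while the needed distributional Laplacian identities hold in the weighted weak formulation — and then justifying each integration by parts by an approximation/cutoff argument in the modulus of the gradient, with \eqref{firstDS} providing exactly the $L^2_\sigma$ integrability of $D^2u$ that makes all error terms vanish in the limit. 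Tracking which cross terms are genuinely error terms (absorbable into the right-hand side or vanishing as $\e\to0$) versus which combine to give the sharp constants in \eqref{stab_intro} is the delicate bookkeeping, considerably longer than for $p=2$, as the authors remark.
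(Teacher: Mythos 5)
Your proposal follows essentially the same route as the paper: choose $\xi=(x\cdot\nabla u)\,r^{(p-n)/2}\phi(\abs{\nabla u}/\e)\zeta$, exploit the dilation structure of $-\Delta_p$ to obtain the identity on the linearized operator applied to $\boldsymbol{c}=x\cdot\nabla u$ (this is exactly the paper's identity~\eqref{stab2}, proved via a Pohozaev-type computation), control the $\nabla\phi_\e$ error terms by dominated convergence using the Damascelli--Sciunzi bound~\eqref{firstDS}, send $\e\downarrow 0$, and close with the sign analysis. The paper just organizes the computation into a sequence of five lemmata rather than one block, but the content is the same.

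Two small slips worth flagging, neither fatal. First, the object you call ``$\Delta_p(x\cdot\nabla u)$'' is really the linearized $p$-Laplace operator $L[\boldsymbol{c}]:=\diver\!\big(\abs{\nabla u}^{p-2}\nabla\boldsymbol{c}+(p-2)\abs{\nabla u}^{p-4}(\nabla u\cdot\nabla\boldsymbol{c})\nabla u\big)$, not the $p$-Laplacian of $\boldsymbol{c}$, and the clean form of the identity (paper's~\eqref{stab2}) is $L[\boldsymbol{c}]+f'(u)\boldsymbol{c}=p\,\diver(\abs{\nabla u}^{p-2}\nabla u)$; with your convention the sign on the right should be flipped. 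Second, in the final algebra for $p\ge 2$ the merged coefficient is $p(n-p)-(p-1)\big(\tfrac{n-p}{2}\big)^2=(n-p)\big(p-\tfrac{(p-1)(n-p)}{4}\big)$, i.e.\ the denominator is $4$ (not $2$ as you wrote); with the $4$ you do get positivity precisely when $p<n<p+\tfrac{4p}{p-1}$, matching~\eqref{dimension}.
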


Before starting the proof of Lemma \ref{lemma_stability}, we need to recall some classical regularity results for nonlinear equations involving the $p$-Laplacian. This will allow us to use distributional second and third derivatives of a regular solution $u$ to $-\Delta_p u=f(u)$.

\begin{remark}\label{rmk_integrability}
	By classical results \cite{DiB,T}, every regular solution $u$ to \eqref{plap_eq} in $B_1\subset\R^n$ is~$C^{1,\vartheta}$ for some $\vartheta>0$. Furthermore, the solution $u$ is $C^2$ in $B_1\cap\{\abs{\nabla u}>0\}$, since it solves a uniformly elliptic equation in the neighborhood of every point $x\in \{\abs{\nabla u}>0\}$ --- see for instance \cite[Corollary~2.2]{DS}. To show this, one breaks the divergence in \eqref{plap_eq} to obtain an equation in nondivergence form with H\"older coefficients, for which one can use Schauder theory.
	
	At the same time, since $f(u)\in C^1(B_1)$, we can differentiate the equation in $B_1\cap\{\abs{\nabla u}>0\}$ and then see it as a nondivergence form equation for $u_{x_i}$. Then, by Calderon-Zygmund theory we deduce that $ u\in W^{3,q}_\text{loc}(B_1\cap\{\abs{\nabla u}>0\}) $ for every $q<\infty$ --- see \cite[Chapter 9]{GT}.
	
	With these regularity results at hand, we are allowed to work with weak second and third derivatives of $u$ throughout the following lemmata, since we will always work in the set $\{\abs{\nabla u}>0\}$.

	Still for a regular solution $u$ to \eqref{plap_eq} in $B_1\subset\R^n$, \cite[Theorem 2.2]{DS} established that, for $p>1$ and $\rho<1$,
	\begin{equation}\label{star}
	\int_{B_\rho\cap\{\abs{\nabla u}>0\}}\abs{\nabla u}^{p-2}\lvert D^2u\rvert^2\,dx < +\infty
	\end{equation}
	(take $\beta=\gamma=0$ in their theorem). With this estimate at hand and a regularizing procedure described in \cite[Corollary 2.2]{DS}, in that paper it was proved that $\abs{\nabla u}^{p-2}\nabla u\in W^{1,2}_\textnormal{loc}(B_1)$ and $\nabla u\in W^{1,2}_{\sigma,\textnormal{loc}}(B_1)$ with $\sigma=\abs{\nabla u}^{p-2}$. In particular, $\abs{\nabla u}^{p-1}\in W^{1,2}_{\textnormal{loc}}(B_1)$. From this and Stampacchia's theorem --- see \cite[Theorem 6.19]{LL} --- the integrands in \eqref{star} and in \eqref{intro_IIder_p>2} are equal to zero a.e. in the critical set $\{\nabla u=0 \}$. Thus, both integrals agree when computing them over $B_\rho$ or $B_\rho\cap\{\abs{\nabla u
	}>0\}$.

	If $p\in(1,2)$, a classical result from \cite{T} ensures that $ \nabla u\in W^{1,2}_\text{loc}(B_1) $. As before, from this we deduce that the integral $\int_{B_{1/2}}\lvert D^2u\rvert\,dx$ in \eqref{intro_IIder_p<2} agrees with $\int_{B_{1/2}\cap\{\abs{\nabla u}>0\}}\lvert D^2u\rvert\,dx$ when $p\in(1,2)$. Both the contributions from \cite{DS} and \cite{T}, that we use several times throughout the paper, are extensively used also in~\cite{FSV,FSV1,CasSa}.
	
	These results from \cite{DS} and \cite{T} will ensure that all the integrals in our upcoming computations for the first term in the stability condition \eqref{stability} are well defined and can be computed either in $B_1$ or in $B_1\cap\{\abs{\nabla u}>0\}$. This will be a consequence of our two choices for $\xi$, which will always include $\nabla u$ as a factor on them.
	
	More generally, for $p\neq2$ and a general test function $\xi\in\mathcal{T}_u$ in \eqref{stability} we also have that the first integral in \eqref{stability} agrees with its value when computed in $\{\abs{\nabla u}>0\}$. This is obvious when $p>2$ since $\abs{\nabla u}^{p-2}=0$ in the critical set. Instead, for $p\in(1,2)$ it is the definition of~$\mathcal{T}_u$ which forces $\nabla\xi=0$ a.e. in $\{\abs{\nabla u}=0\}$, since $\abs{\nabla u}^{p-2}=+\infty$ a.e. in the critical set.
	
	Finally, even if we will not use this fact, it is worth mentioning Lou's result \cite{Lou} stating that the critical set $\{\nabla u=0\}$ has null measure if $f$ is positive and $u$ is a regular solution of~\eqref{plap_eq}, not necessarily stable.
\end{remark}

We split the proof of Lemma \ref{lemma_stability} into five lemmata. To state the first one, we consider
\[
\boldsymbol{c}(x):=x\cdot\nabla u(x)
\]
and
\[
\phi_\e:=\phi\left(\frac{\abs{\nabla u}}{\e}\right),
\]
where $\phi\in C^\infty(\R)$ takes values into $[0,1]$, $\phi(t)=1$ if $t\geq2$, and $\phi(t)=0$ if $t\leq1$.
 
\begin{lemma}\label{lemma_stab1}
	Let $u$ be a stable regular solution of $-\plaplacian u =f(u)$ in $B_1\subset\R^n$, with $f\in C^1(\R)$, $\phi_\e$ as defined above, $\eta$ a Lipschitz function with compact support in $B_1$, and $\boldsymbol{c}=x\cdot \nabla u$. Then,
	\begin{equation}\label{stab1}
	\begin{split}
	&\hspace{-0.2cm}\int_{B_1}\boldsymbol{c}\Big\{\textnormal{div}\left(\abs{\nabla u}^{p-2}\nabla \boldsymbol{c}\right)+f'(u)\boldsymbol{c}+(p-2)\textnormal{div}\left(\abs{\nabla u}^{p-4}(\nabla \boldsymbol{c}\cdot\nabla u)\nabla u\right)\Big\}\eta^2\phi_\e^2\,dx 
	\\
	&\hspace{1cm}\leq
	\int_{B_1}\boldsymbol{c}^2\Big\{\abs{\nabla u}^{p-2}\abs{\nabla\eta}^2+(p-2)\abs{\nabla u}^{p-4}(\nabla\eta\cdot\nabla u)^2\Big\}\phi_\e^2\,dx+o(1),
	\end{split}
	\end{equation}
	where $o(1)$ indicates a quantity that goes to zero as $\e\downarrow0$, the derivatives and divergences in the previous integrals are understood in the weak sense, and the integrands are $L^1(B_{1})$ functions.
\end{lemma}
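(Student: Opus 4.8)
The plan is to derive \eqref{stab1} by plugging the test function $\xi = \boldsymbol{c}\, r^{(p-n)/2}\phi_\e\zeta$ into the stability inequality \eqref{stability}; in fact, since Lemma~\ref{lemma_stab1} is stated for a general Lipschitz $\eta$ (in place of $r^{(p-n)/2}\zeta$), I would simply take $\xi = \boldsymbol{c}\,\eta\,\phi_\e$ directly. The first step is to justify that $\xi$ is an admissible test function in $\mathcal{T}_u$: since $\phi_\e$ vanishes where $|\nabla u|\le\e$, the function $\xi$ is supported away from the critical set, and there (by Remark~\ref{rmk_integrability}) $u\in W^{3,q}_{\mathrm{loc}}$, so $\xi$ is Lipschitz with compact support, hence lies in $W^{1,2}_{\sigma,0}(B_1)$ for $p\ge2$ and in the corresponding space for $p\in(1,2)$. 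Note $\boldsymbol{c}=x\cdot\nabla u$ is only $C^{0,\vartheta}$ globally but $C^1$ on $\{|\nabla u|>0\}$, which is exactly where $\xi$ is supported, so the computation is legitimate.

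The second step is the core computation: expand the left-hand side of \eqref{stability} with $\xi=\boldsymbol{c}\eta\phi_\e$. Writing $w=\eta\phi_\e$ so that $\xi=\boldsymbol{c}w$, we have $\nabla\xi = w\nabla\boldsymbol{c} + \boldsymbol{c}\nabla w$, and the quadratic form in \eqref{stability} splits into three groups of terms: those quadratic in $w$ (giving $\int w^2\{|\nabla u|^{p-2}|\nabla\boldsymbol{c}|^2 + (p-2)|\nabla u|^{p-4}(\nabla\boldsymbol{c}\cdot\nabla u)^2\}$), those quadratic in $\nabla w$ (giving the right-hand side of \eqref{stab1} up to the $\phi_\e$ placement), and the cross terms linear in both. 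For the $w^2$-terms and the cross terms I would integrate by parts, moving a derivative off $\boldsymbol{c}$; this is where the divergence expressions $\mathrm{div}(|\nabla u|^{p-2}\nabla\boldsymbol{c})$ and $\mathrm{div}(|\nabla u|^{p-4}(\nabla\boldsymbol{c}\cdot\nabla u)\nabla u)$ appear, multiplied by $\boldsymbol{c}$. The integration by parts is valid because everything is supported in $\{|\nabla u|>\e\}$ where $u$ is smooth enough; boundary terms vanish since $\eta$ has compact support. Combining, one arrives at the left-hand side of \eqref{stab1} but with $w^2 = \eta^2\phi_\e^2$ throughout — except that the cross terms and the integration by parts also produce terms where derivatives land on $\phi_\e$ rather than on $\eta$. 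The point of stating the conclusion with an $o(1)$ error is precisely to absorb all such terms: each involves $\nabla\phi_\e = \phi'(|\nabla u|/\e)\nabla|\nabla u|/\e$, supported in $\{\e\le|\nabla u|\le2\e\}$, and I would bound them using the Cauchy–Schwarz inequality together with the Damascelli–Sciunzi estimate \eqref{firstDS}/\eqref{star}, namely $\int_{B_{1/2}}|\nabla u|^{p-2}|D^2u|^2\,dx<\infty$, which forces $\int_{\{\e\le|\nabla u|\le2\e\}}|\nabla u|^{p-2}|D^2u|^2\,dx\to0$ as $\e\downarrow0$ by absolute continuity of the integral.

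The third step is the bookkeeping needed to show that the remaining integrands are genuinely in $L^1(B_1)$, as the statement claims. On $\{|\nabla u|>\e\}$ the terms $|\nabla u|^{p-2}|\nabla\boldsymbol{c}|^2\eta^2\phi_\e^2$ etc.\ are controlled: $\nabla\boldsymbol{c} = \nabla u + (x\cdot\nabla)\nabla u$ so $|\nabla\boldsymbol{c}|\le C(|\nabla u| + |D^2u|)$, and $|\nabla u|^{p-2}|\nabla u|^2$ is bounded while $|\nabla u|^{p-2}|D^2u|^2$ is in $L^1$ by \eqref{star}; similarly for the $(p-2)$-terms using $|\nabla u|^{p-4}(\nabla\boldsymbol{c}\cdot\nabla u)^2\le|\nabla u|^{p-2}|\nabla\boldsymbol{c}|^2$. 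The term $f'(u)\boldsymbol{c}^2\eta^2\phi_\e^2$ is bounded since $u\in C^{1,\vartheta}$ and $f\in C^1$. I expect the main obstacle to be the careful tracking of the cross terms produced by integration by parts and the verification that every piece containing $\nabla\phi_\e$ can be estimated by $\big(\int_{\{\e\le|\nabla u|\le 2\e\}}|\nabla u|^{p-2}|D^2u|^2\big)^{1/2}$ times a bounded factor — in particular making sure no term survives that is $O(1)$ rather than $o(1)$. The cleanest route is to first prove \eqref{stab1} for smooth $\eta$ and then pass to Lipschitz $\eta$ by a standard approximation, so that all intermediate manipulations are unambiguous.
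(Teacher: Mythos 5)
Your proposal follows exactly the route the paper takes: plug $\xi_\e=\boldsymbol{c}\,\eta\,\phi_\e$ into \eqref{stability}, verify admissibility via the support in $\{|\nabla u|\ge\e\}$ together with Remark~\ref{rmk_integrability}, expand $|\nabla\xi_\e|^2$ and the $(p-2)$-analogue, integrate by parts to convert the $|\nabla\boldsymbol{c}|^2$ and cross terms into $\boldsymbol{c}\,\mathrm{div}(\cdot)$ expressions, and absorb every term carrying $\nabla\phi_\e$ into $o(1)$ by noting that $|\nabla\phi_\e|\le \e^{-1}|\phi'(|\nabla u|/\e)|\,|D^2u|$ is supported in $\{\e<|\nabla u|<2\e\}$, that $|\boldsymbol{c}|\le|\nabla u|<2\e$ there, and then using \eqref{firstDS} (and its Cauchy--Schwarz consequence) with dominated convergence. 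The only cosmetic difference is in the integration-by-parts bookkeeping: the paper integrates by parts only the cross term and exploits the resulting cancellation with the $\eta^2|\nabla u|^{p-2}|\nabla\boldsymbol{c}|^2$ term, whereas you propose to integrate by parts both; these are algebraically equivalent and yield the same conclusion.
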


\begin{proof}	
	First,
	by Remark \ref{rmk_integrability} the weak second derivatives of $\boldsymbol{c}(x)=x\cdot\nabla u(x)$ are well defined in $\{\abs{\nabla u}>\e\}$.

	We plug  
	$\xi=\xi_\e=\boldsymbol{c}\,\eta\,\phi_\e$ in the stability inequality \eqref{stability}. 
	It is clear that $\xi_\e$ belongs to the space of test functions $\mathcal{T}_u$, since the support of $\xi_\e$ is contained in $B_\rho\cap\{\abs{\nabla u}\geq\e\}$ for some $\rho<1$ and, in this set, $\abs{\nabla u}^{p-2}$ is bounded below and above by positive constants (even when $p<2$) and at the same time $u$ is $C^2$ --- hence $D^2u$ is bounded in this set. 
	
	Next, since $\nabla u\in W^{1,2}_{\sigma,\text{loc}}(B_1) $ --- see Remark \ref{rmk_integrability} --- we have that
	\begin{equation}\label{funct_space1}
	\int_{B_1}\abs{\nabla u}^{p-2}\lvert D^2u\rvert^2\eta^2\,dx <+\infty,
	\end{equation}
	and combining \eqref{funct_space1} with a Cauchy-Schwarz inequality we also deduce that
	\begin{equation}\label{funct_space2}
	\int_{B_1}\abs{\nabla u}^{p-1}\lvert D^2u\rvert\eta\,dx <+\infty.
	\end{equation}
	These two facts will be crucial to let $\e\downarrow0$ later in the proof, through the use of the dominated convergence theorem.
	
	Now, we plug $\xi_\e\in\mathcal{T}_u$ in the stability inequality \eqref{stability}, obtaining for the first term
	\begin{equation}\label{cutoff_1}
	\begin{split}
	&\int_{B_1}\abs{\nabla u}^{p-2}\abs{\nabla\xi_\e}^2\,dx
	\\
	&\hspace{0.5cm}=\int_{B_1} \left\{\boldsymbol{c}^2\abs{\nabla u}^{p-2}\abs{\nabla\eta}^2+\eta^2\abs{\nabla u}^{p-2}\abs{\nabla \boldsymbol{c}}^2+\boldsymbol{c}\abs{\nabla u}^{p-2}\nabla \boldsymbol{c}\cdot\nabla\eta^2\right\}\phi^2_\e\,dx
	\\
	&\hspace{1cm}+\int_{B_1}\abs{\nabla u}^{p-2}\eta^2\boldsymbol{c}^2\abs{\nabla\phi_\e}^2\,dx
	+2\int_{B_1}\abs{\nabla u}^{p-2}\boldsymbol{c}\,\eta\,\phi_\e\,\nabla\phi_\e\cdot\left(\eta\nabla\boldsymbol{c}+\boldsymbol{c}\nabla\eta\right)\,dx.
	\end{split}
	\end{equation}
	We claim that the terms in \eqref{cutoff_1} in which $\nabla\phi_\e$ appears, i.e., the second and the third one in the right-hand side, go to zero as $\e\downarrow0$. 
	In order to show this, and also to control by $o(1)$ other terms that will appear later, we first note that
	\begin{equation}\label{cutoff_2}
	\abs{\nabla\phi_\e}
	=\abs{\frac{1}{\e}\phi'\left(\frac{\abs{\nabla u}}{\e}\right)\nabla\abs{\nabla u}}
	=\abs{\frac{1}{\e}\phi'\left(\frac{\abs{\nabla u}}{\e}\right)D^2u\cdot\frac{\nabla u}{\abs{\nabla u}}}
	\leq\frac{1}{\e}\abs{\phi'\left(\frac{\abs{\nabla u}}{\e}\right)}\lvert D^2u\rvert.
	\end{equation}	
	We use now that $\abs{\phi'(\abs{\nabla u}/\e)}$ is bounded independently of $\e$ and it is supported in $\{\e<|\nabla u|<2\e\}$, that in this set we have $\boldsymbol{c}^2\leq\abs{\nabla u}^2\leq 4\e^2$, and also that $\nabla\boldsymbol{c}=\nabla u+D^2u\cdot x$, to obtain
	\begin{equation}\label{cutoff_3}
	\begin{split}
	&\hspace{-0.2cm}\int_{B_1}\abs{\nabla u}^{p-2}\eta^2\boldsymbol{c}^2\abs{\nabla\phi_\e}^2\,dx
	+ \int_{B_1}\abs{\nabla u}^{p-2}\eta^2\abs{\boldsymbol{c}\,\nabla\boldsymbol{c}}\abs{\phi_\e\nabla\phi_\e}\,dx
	\\
	&\hspace{7cm}
	+ \int_{B_1}\abs{\nabla u}^{p-2}\abs{\eta\nabla\eta}\boldsymbol{c}^2\abs{\phi_\e\nabla\phi_\e}\,dx
	\\
	&\hspace{0.5cm}\leq C\int_{\{\e<\abs{\nabla u}<2\e\}}\abs{\nabla u}^{p-2}\Big\{\eta^2\lvert D^2u\rvert^2+\abs{\eta}\abs{\nabla u}\lvert D^2u\rvert	\Big\}\,dx
	= o(1) \qquad \text{as}\,\,\e\downarrow0,
	\end{split}
	\end{equation}
	where $C$ is a constant independent of $\e$. That the last integral tends to $0$ as $\e\downarrow0$ follows from \eqref{funct_space1}, \eqref{funct_space2}, and dominated convergence.
	
	Therefore, using \eqref{cutoff_3}, we deduce from \eqref{cutoff_1} that
	\begin{equation}\label{sstab1}
	\begin{split}
	&\hspace{-0.5cm}\int_{B_1}\abs{\nabla u}^{p-2}\abs{\nabla\xi_\e}^2\,dx=\int_{B_1}\phi^2_\e\boldsymbol{c}^2\abs{\nabla u}^{p-2}\abs{\nabla\eta}^2\,dx
	\\
	&\hspace{0.8cm}+\int_{B_1}\phi^2_\e\eta^2\abs{\nabla u}^{p-2}\abs{\nabla \boldsymbol{c}}^2\,dx
	+\int_{B_1}\phi^2_\e\boldsymbol{c}\abs{\nabla u}^{p-2}\nabla \boldsymbol{c}\cdot\nabla\eta^2\,dx+o(1).
	\end{split}
	\end{equation}
	We now integrate by parts the third term in the right-hand side to get
	\begin{equation}\label{sstab2}
	\begin{split}
	&\int_{B_1}\phi^2_\e\boldsymbol{c}\abs{\nabla u}^{p-2}\nabla \boldsymbol{c}\cdot\nabla\eta^2\,dx 
	=
	-\int_{B_1}\phi^2_\e\eta^2\textnormal{div}(\boldsymbol{c}\abs{\nabla u}^{p-2}\nabla \boldsymbol{c})\,dx
	\\
	&\hspace{3cm}
	-\int_{B_1}\eta^2\boldsymbol{c}\abs{\nabla u}^{p-2}\nabla \boldsymbol{c}\cdot\nabla\phi^2_\e\,dx
	\\
	&\hspace{1cm}=-\int_{B_1}\phi^2_\e\eta^2\abs{\nabla u}^{p-2}\abs{\nabla \boldsymbol{c}}^2\,dx
	-\int_{B_1}\phi^2_\e\eta^2\boldsymbol{c}\,\textnormal{div}(\abs{\nabla u}^{p-2}\nabla \boldsymbol{c})\,dx
	+o(1),
	\end{split}
	\end{equation}
	where we used \eqref{cutoff_3} to control the term containing $\nabla\phi_\e$ with $o(1)$. Thus, plugging~\eqref{sstab2} in~\eqref{sstab1}, we exploit a simplification and obtain
	\begin{equation}\label{sstab3}
	\begin{split}
	&\int_{B_1}\abs{\nabla u}^{p-2}\abs{\nabla\xi_\e}^2\,dx
	\\
	&\hspace{0.8cm}=\int_{B_1}\phi^2_\e\boldsymbol{c}^2\abs{\nabla u}^{p-2}\abs{\nabla\eta}^2\,dx-\int_{B_1}\phi^2_\e\eta^2\boldsymbol{c}\,\textnormal{div}(\abs{\nabla u}^{p-2}\nabla \boldsymbol{c})\,dx
	+o(1).
	\end{split}
	\end{equation}
	
	Computing now the second term in the stability inequality \eqref{stability} for the test function $ \xi_\e$, we find
	\begin{equation*}
	\begin{split}
	&\hspace{-5mm}\int_{B_1}(p-2)\abs{\nabla u}^{p-4}\left(\nabla u\cdot\nabla\xi_\e\right)^2dx
	\\
	&=
	\int_{B_1} (p-2)\phi^2_\e\boldsymbol{c}^2\abs{\nabla u}^{p-4}\left(\nabla u\cdot\nabla\eta\right)^2dx
	+
	\int_{B_1} (p-2)\phi^2_\e\eta^2\abs{\nabla u}^{p-4}\left(\nabla\boldsymbol{c}\cdot\nabla u\right)^2dx
	\\
	&\hspace{3cm}+\int_{B_1} (p-2)\phi^2_\e\abs{\nabla u}^{p-4}\boldsymbol{c}\left(\nabla u\cdot\nabla\eta^2\right)\left(\nabla\boldsymbol{c}\cdot\nabla u\right)\,dx+o(1),
	\end{split}
	\end{equation*}
	where all the terms containing $\nabla\phi_\e$ have been controlled within $o(1)$ using~\eqref{cutoff_3}.
	Integrating by parts the second term in the right-hand side, exploiting a cancellation, and seeing again that the terms containing $\nabla\phi_\e$ tend to zero as $\e\downarrow0$ thanks to \eqref{cutoff_3}, we get
	\begin{equation}\label{sstab4}
	\begin{split}
	&\int_{B_1}(p-2)\abs{\nabla u}^{p-4}\left(\nabla u\cdot\nabla\xi_\e\right)^2dx=
	\int_{B_1} (p-2)\phi^2_\e\boldsymbol{c}^2\abs{\nabla u}^{p-4}\left(\nabla u\cdot\nabla\eta\right)^2dx
	\\
	&\hspace{3cm}-\int_{B_1}(p-2)\phi^2_\e\eta^2\boldsymbol{c}\,\textnormal{div}\left(\abs{\nabla u}^{p-4}(\nabla\boldsymbol{c}\cdot\nabla u)\nabla u \right)\,dx+o(1).
	\end{split}
	\end{equation}
	Finally, using \eqref{sstab3} and \eqref{sstab4} in the stability inequality \eqref{stability} with $\xi=\xi_\e$, we conclude the lemma.
\end{proof}

Next step towards the proof of Lemma \ref{lemma_stability} consists of proving an identity that involves the weak third derivatives of $u$. It holds in the distributional sense in $B_1\cap\{\abs{\nabla u}>0 \}$ and does not require the stability of the regular solution. Recall that $ u\in W^{3,q}_\text{loc}(B_1\cap\{\abs{\nabla u}>0\}) $ for every $q<\infty$ --- see Remark \ref{rmk_integrability}.
After proving this identity, we will plug it in \eqref{stab1}, exploit a simplification in the third derivatives of $u$, and send $\e\downarrow0$. This limit argument will be carried out in Lemma~\ref{lemma_stab3}.

\begin{lemma}
	Let $u$ be a regular solution of $-\plaplacian u =f(u)$ in $B_1\subset\R^n$, with $f\in C^1(\R)$, and $\boldsymbol{c}=x\cdot \nabla u$. Then, the identity 
	\begin{equation}\label{stab2}
		\begin{aligned}
		\textnormal{div}\left(\abs{\nabla u}^{p-2}\nabla \boldsymbol{c}\right)+f'&(u)\boldsymbol{c}+(p-2)\,\textnormal{div}\left(\abs{\nabla u}^{p-4}(\nabla \boldsymbol{c}\cdot \nabla u)\nabla u\right)
		\\
		&\hspace{2cm}=p\,\textnormal{div}\left(\abs{\nabla u}^{p-2}\nabla u\right)
		\end{aligned}
	\end{equation}
	holds in the weak sense in $B_1\cap\{\abs{\nabla u}>0\}$.
	\begin{proof}
		Throughout the proof we will use the implicit summation over repeated indexes and we will denote the derivatives with subscripts --- $u_j$ denoting the derivative $u_{x_j}$.
		
		We start the proof by claiming the following identity:
		\begin{equation}\label{claim2}
		\begin{aligned}
		\textnormal{div}\left(\abs{\nabla u}^{p-2}\nabla \boldsymbol{c}\right)&+f'(u)\boldsymbol{c}
		\\
		&=2\,\textnormal{div}\left(\abs{\nabla u}^{p-2}\nabla u\right) 
		-\textnormal{div}\big((x\cdot\nabla\abs{\nabla u}^{p-2})\nabla u\big).
		\end{aligned}
		\end{equation}
		To prove it, we recall that $\nabla \boldsymbol{c}=\nabla u+D^2u\cdot x$ and we write the first term in \eqref{claim2} as
		\begin{align*}
			\textnormal{div}\left(\abs{\nabla u}^{p-2}\nabla \boldsymbol{c}\right)&=\textnormal{div}\left(\abs{\nabla u}^{p-2}\nabla u\right)+\left(\abs{\nabla u}^{p-2}u_{ij}x_j\right)_i \\
			&=\textnormal{div}\left(\abs{\nabla u}^{p-2}\nabla u\right)+\abs{\nabla u}^{p-2}\Delta u+\left(\abs{\nabla u}^{p-2}u_{ij}\right)_ix_j
			\\
			&=2\,\textnormal{div}\left(\abs{\nabla u}^{p-2}\nabla u\right)-\nabla\abs{\nabla u}^{p-2}\cdot\nabla u+\left(\abs{\nabla u}^{p-2}u_{ij}\right)_ix_j.
		\end{align*}
		For the second term in the left-hand side of \eqref{claim2}, we first compute
		\[
		f'(u)u_j=-\textnormal{div}\left(\abs{\nabla u}^{p-2}\nabla u_j\right)-(p-2)\,\textnormal{div}\left(\abs{\nabla u}^{p-4}u_{kj}u_k\nabla u\right),
		\]
		and then
		\begin{align*}
			f'(u)u_jx_j&=-\left(\abs{\nabla u}^{p-2}u_{ij}\right)_ix_j-(p-2)\left(\abs{\nabla u}^{p-4}u_{kj}u_ku_i\right)_ix_j.
		\end{align*}
		Hence, we have the following identity for the left-hand side in \eqref{claim2}:
		\begin{align*}
		&\hspace{-0.2cm}\textnormal{div}\left(\abs{\nabla u}^{p-2}\nabla \boldsymbol{c}\right)+f'(u)\boldsymbol{c}
		\\
		&\hspace{0.6cm}=2\,\textnormal{div}\left(\abs{\nabla u}^{p-2}\nabla u\right)
		-\nabla\abs{\nabla u}^{p-2}\cdot\nabla u-(p-2)\left(\abs{\nabla u}^{p-4}u_{kj}u_ku_i\right)_ix_j.
		\end{align*}
		Now, writting the last term in \eqref{claim2} as
		\begin{equation*}
		\begin{split}
		-\textnormal{div}\big((x\cdot\nabla\abs{\nabla u}^{p-2})\nabla u\big)
		&=-\nabla\abs{\nabla u}^{p-2}\cdot\nabla u
		-\left(\left(\abs{\nabla u}^{p-2}\right)_ju_i\right)_ix_j,
		\\
		&=-\nabla\abs{\nabla u}^{p-2}\cdot\nabla u
		-(p-2)\left(\abs{\nabla u}^{p-4} u_{kj} u_k u_i\right)_ix_j,
		\end{split}
		\end{equation*}
		we conclude claim \eqref{claim2}.
		
		At this point, using that $\nabla \boldsymbol{c}=\nabla u+D^2u\cdot x$, we write the third term in \eqref{stab2} as
		\begin{equation}\label{123}
		\begin{split}
		&(p-2)\,\textnormal{div}\left(\abs{\nabla u}^{p-4}(\nabla \boldsymbol{c}\cdot \nabla u)\nabla u\right)
		\\
		&\hspace{0.6cm}=(p-2)\,\textnormal{div}\left(\abs{\nabla u}^{p-2}\nabla u\right)
		+(p-2)\,\textnormal{div}\left(\abs{\nabla u}^{p-4} D^2u[x,\nabla u]\nabla u\right).
		\end{split}
		\end{equation}
		After adding up the two identities \eqref{claim2} and \eqref{123}, we exploit the cancellation of the last term in \eqref{claim2} with the last one in \eqref{123}, finishing the proof of \eqref{stab2}.
	\end{proof}
\end{lemma}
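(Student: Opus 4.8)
The plan is to work entirely inside the open set $B_1\cap\{\abs{\nabla u}>0\}$, where, by Remark~\ref{rmk_integrability}, $u\in C^2$ and moreover $u\in W^{3,q}_{\mathrm{loc}}$ for every $q<\infty$; thus all the second and third weak derivatives of $u$ occurring in \eqref{stab2} are genuine $L^q_{\mathrm{loc}}$ functions there, every manipulation below is an a.e.\ pointwise identity, and the degenerate set is simply excluded from the start. The conceptual content of \eqref{stab2} is that $\boldsymbol{c}=x\cdot\nabla u$ is the infinitesimal generator of dilations applied to $u$: writing $L w:=\diver(\abs{\nabla u}^{p-2}\nabla w)+(p-2)\diver(\abs{\nabla u}^{p-4}(\nabla u\cdot\nabla w)\nabla u)$ for the linearization of $w\mapsto\plaplacian w$ at $u$, the left-hand side of \eqref{stab2} is exactly $L\boldsymbol{c}+f'(u)\boldsymbol{c}$, and differentiating at $\lambda=1$ the scaled relation $-\plaplacian u(\lambda\,\cdot)=\lambda^p f(u(\lambda\,\cdot))$ formally yields $L\boldsymbol{c}+f'(u)\boldsymbol{c}=-pf(u)=p\,\diver(\abs{\nabla u}^{p-2}\nabla u)$, which is \eqref{stab2}. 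My proof would make this rigorous by direct computation.

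First I would record the differentiated equation: differentiating $\diver(\abs{\nabla u}^{p-2}\nabla u)=-f(u)$ in the direction $x_k$ and exchanging $\partial_k$ with the outer divergence gives, in $\{\abs{\nabla u}>0\}$, the identity $L u_k+f'(u)u_k=0$ for each $k$ (each partial derivative of $u$ is annihilated by $L+f'(u)$). Multiplying by $x_k$ and summing over $k$ rewrites the middle term as $f'(u)\boldsymbol{c}=x_k f'(u)u_k=-x_k\,\partial_i\big(\abs{\nabla u}^{p-2}u_{ik}+(p-2)\abs{\nabla u}^{p-4}u_{\ell k}u_\ell u_i\big)$. Then I would expand the two divergence terms on the left of \eqref{stab2} using $\nabla\boldsymbol{c}=\nabla u+(D^2u)\,x$, i.e.\ $(\nabla\boldsymbol{c})_i=u_i+u_{ij}x_j$: the first term becomes $\diver(\abs{\nabla u}^{p-2}\nabla u)+x_j\,\partial_i(\abs{\nabla u}^{p-2}u_{ij})+\abs{\nabla u}^{p-2}\Delta u$, and, using $\nabla\boldsymbol{c}\cdot\nabla u=\abs{\nabla u}^2+x_j u_i u_{ij}$, the third term becomes $(p-2)\diver(\abs{\nabla u}^{p-2}\nabla u)+(p-2)\diver\big(\abs{\nabla u}^{p-4}(x_j u_i u_{ij})\nabla u\big)$.

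Adding these two expansions to the expression for $f'(u)\boldsymbol{c}$ above, the terms $\pm\,x_j\,\partial_i(\abs{\nabla u}^{p-2}u_{ij})$ cancel, and collecting the two scale-invariant pieces $\diver(\abs{\nabla u}^{p-2}\nabla u)$ and $(p-2)\diver(\abs{\nabla u}^{p-2}\nabla u)$ together with $\abs{\nabla u}^{p-2}\Delta u$ — and using $\abs{\nabla u}^{p-2}\Delta u-\diver(\abs{\nabla u}^{p-2}\nabla u)=-\nabla(\abs{\nabla u}^{p-2})\cdot\nabla u$ — I would arrive at $L\boldsymbol{c}+f'(u)\boldsymbol{c}=p\,\diver(\abs{\nabla u}^{p-2}\nabla u)+R$, where $R$ collects the remaining $(p-2)$-homogeneous third-derivative terms plus $-\nabla(\abs{\nabla u}^{p-2})\cdot\nabla u$.

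The last — and main — step is to show $R\equiv0$; this is the delicate index bookkeeping in the third derivatives of $u$. Here I would use the elementary identity $(p-2)\abs{\nabla u}^{p-4}u_{ik}u_i u_k=\nabla(\abs{\nabla u}^{p-2})\cdot\nabla u$ (just $\tfrac{p-2}{2}\abs{\nabla u}^{p-4}\nabla\abs{\nabla u}^2=\nabla\abs{\nabla u}^{p-2}$ contracted with $\nabla u$) together with the commutation $\partial_i(x_j\,g)=x_j\,\partial_i g+\delta_{ij}\,g$: moving $x_j$ inside $\partial_i$ in $(p-2)\diver(\abs{\nabla u}^{p-4}(x_j u_i u_{ij})\nabla u)$ produces exactly the Kronecker term $(p-2)\abs{\nabla u}^{p-4}u_{ik}u_i u_k=\nabla(\abs{\nabla u}^{p-2})\cdot\nabla u$, which cancels the $-\nabla(\abs{\nabla u}^{p-2})\cdot\nabla u$, while the remaining $x_j$-outside divergence cancels the $(p-2)$-homogeneous part of $f'(u)\boldsymbol{c}$; hence $R=0$. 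Since every step is an a.e.\ identity between $L^q_{\mathrm{loc}}$ functions on $B_1\cap\{\abs{\nabla u}>0\}$, pairing with a test function supported there gives \eqref{stab2} in the weak sense. Equivalently — and this is likely how the authors organize it — one first proves the intermediate identity $\diver(\abs{\nabla u}^{p-2}\nabla\boldsymbol{c})+f'(u)\boldsymbol{c}=2\,\diver(\abs{\nabla u}^{p-2}\nabla u)-\diver((x\cdot\nabla\abs{\nabla u}^{p-2})\nabla u)$, then expands the $(p-2)$-term as $(p-2)\diver(\abs{\nabla u}^{p-2}\nabla u)+(p-2)\diver(\abs{\nabla u}^{p-4}D^2u[x,\nabla u]\nabla u)$, and isolates the single cancellation between $-\diver((x\cdot\nabla\abs{\nabla u}^{p-2})\nabla u)$ and $(p-2)\diver(\abs{\nabla u}^{p-4}D^2u[x,\nabla u]\nabla u)$. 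I expect the only real work is this final cancellation; everything upstream is bookkeeping once the regularity of Remark~\ref{rmk_integrability} legitimizes the computations on $\{\abs{\nabla u}>0\}$.
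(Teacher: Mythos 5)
Your proposal is correct and follows essentially the same route as the paper: differentiate the equation to express $f'(u)u_j$ as a divergence, expand $\nabla\boldsymbol{c}=\nabla u+D^2u\cdot x$, and observe that the surviving $(p-2)$-homogeneous third-derivative terms cancel via the identity $x\cdot\nabla\abs{\nabla u}^{p-2}=(p-2)\abs{\nabla u}^{p-4}D^2u[x,\nabla u]$ (equivalently, your Kronecker term from $\partial_i(x_j g)=x_j\partial_i g+\delta_{ij}g$). Your closing paragraph in fact reproduces the paper's exact organization --- the intermediate identity followed by the separate expansion of the $(p-2)$-term --- so the only difference from the body of your argument is that the paper packages the bookkeeping into that intermediate claim rather than carrying out one global expansion and showing a residual $R$ vanishes.
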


Combining the previous two lemmas, we obtain a new inequality for stable regular solutions. 

\begin{lemma}\label{lemma_stab3}
	Let $u$ be a stable regular solution of $-\plaplacian u =f(u)$ in $B_1\subset\R^n$, with $f\in C^1(\R)$, and $\eta$ a Lipschitz function with compact support in $B_1$. Then,
	\begin{equation}\label{78}
	\begin{split}
	&\hspace{-0.33cm}\int_{B_1}2\abs{\nabla u}^pr\,\eta_{r}\,\eta\,dx-p\int_{B_1}ru_r\abs{\nabla u}^{p-2}\nabla u\cdot\nabla(\eta^2) \,dx+(n-p)\int_{B_1}\abs{\nabla u}^p\eta^2\,dx
	\\
	&\hspace{3cm}\leq\int_{B_1}r^2u_r^2\Big\{\abs{\nabla u}^{p-2}\abs{\nabla\eta}^2+(p-2)\abs{\nabla u}^{p-4}\left(\nabla\eta\cdot\nabla u\right)^2
	\Big\} \,dx.
	\end{split}
	\end{equation}	
	
\end{lemma}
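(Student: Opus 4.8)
The plan is to substitute the pointwise identity \eqref{stab2} into the integral inequality \eqref{stab1}. Since every integrand in \eqref{stab1} carries the factor $\phi_\e^2$, it is supported in $\{\abs{\nabla u}\geq\e\}\subset B_1\cap\{\abs{\nabla u}>0\}$, which is exactly the open set where \eqref{stab2} holds in the weak sense and where $u\in W^{3,q}_{\textnormal{loc}}$ by Remark~\ref{rmk_integrability}. Hence the left-hand side of \eqref{stab1} equals
\[
p\int_{B_1}\boldsymbol{c}\,\textnormal{div}\big(\abs{\nabla u}^{p-2}\nabla u\big)\,\eta^2\phi_\e^2\,dx,
\]
and, since $\abs{\nabla u}^{p-2}\nabla u\in W^{1,2}_{\textnormal{loc}}(B_1)$ (see Remark~\ref{rmk_integrability}) while $\boldsymbol{c}\,\eta^2\phi_\e^2$ is Lipschitz with compact support inside $\{\abs{\nabla u}>0\}$, I would integrate by parts to rewrite this as $-p\int_{B_1}\abs{\nabla u}^{p-2}\nabla u\cdot\nabla\big(\boldsymbol{c}\,\eta^2\phi_\e^2\big)\,dx$.

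The next step is to expand the gradient and use the elementary identity
\[
\abs{\nabla u}^{p-2}\nabla u\cdot\nabla\boldsymbol{c}=\abs{\nabla u}^p+\tfrac1p\,x\cdot\nabla\big(\abs{\nabla u}^p\big),
\]
which follows from $\nabla\boldsymbol{c}=\nabla u+D^2u\cdot x$ together with $p\abs{\nabla u}^{p-2}u_iu_{ij}=\partial_j\abs{\nabla u}^p$. Integrating by parts once more the term $\int_{B_1}x\cdot\nabla\big(\abs{\nabla u}^p\big)\,\eta^2\phi_\e^2\,dx$, and then using $\textnormal{div}\,x=n$, $x\cdot\nabla(\eta^2)=2r\eta\,\eta_r$, and $\boldsymbol{c}=ru_r$, one recovers exactly the three terms on the left-hand side of \eqref{78} — the coefficient $n-p$ appearing as the $n$ from $\textnormal{div}\,x$ minus the $p$ from the displayed identity above — plus a collection of error terms carrying $\nabla\phi_\e$. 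Since the right-hand side of \eqref{stab1} is, after $\boldsymbol{c}^2=r^2u_r^2$, precisely the right-hand side of \eqref{78} times $\phi_\e^2$, it only remains to dispose of the $\nabla\phi_\e$ terms and to let $\e\downarrow0$.

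For the $\nabla\phi_\e$ terms I would argue exactly as in the proof of Lemma~\ref{lemma_stab1}: by \eqref{cutoff_2} one has $\abs{\nabla\phi_\e}\leq\e^{-1}\abs{\phi'(\abs{\nabla u}/\e)}\,\abs{D^2u}$, supported in $\{\e<\abs{\nabla u}<2\e\}$, where $\abs{\nabla u}^p\leq(2\e)^p$ and $\abs{\boldsymbol{c}}\leq\abs{\nabla u}\leq2\e$; combining this with a Cauchy–Schwarz inequality and the Damascelli–Sciunzi estimate \eqref{funct_space1}–\eqref{funct_space2} (equivalently \eqref{star}), from which $\int_{\{\e<\abs{\nabla u}<2\e\}}\abs{\nabla u}^{p-2}\abs{D^2u}^2\,dx\to0$, each such term is bounded by $C\e^{p/2}\,o(1)=o(1)$. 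To pass to the limit, note that the three surviving integrands on the left and the one on the right all vanish a.e.\ on $\{\abs{\nabla u}=0\}$: this is immediate for the terms with a factor $\abs{\nabla u}^p$, while for the others one uses $\abs{u_r}\leq\abs{\nabla u}$ to dominate them by a constant times $\abs{\nabla u}^p\,(1+\abs{\nabla\eta}^2)$. On $\{\abs{\nabla u}>0\}$ one has $\phi_\e^2\uparrow1$, and since $u\in C^{1,\vartheta}$ and $\eta$ is Lipschitz with compact support these dominating functions are integrable; dominated convergence then yields \eqref{78}.

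I expect the only real difficulty to be technical rather than conceptual: keeping a uniform-in-$\e$ control of the error terms carrying $\nabla\phi_\e$ and justifying the two integrations by parts in the degenerate setting. Both points rest on the regularity $\nabla u\in W^{1,2}_{\sigma,\textnormal{loc}}(B_1)$ from \cite{DS} recalled in Remark~\ref{rmk_integrability}, and the whole computation runs closely parallel to the one already performed in Lemma~\ref{lemma_stab1}.
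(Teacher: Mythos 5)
Your proof is correct and takes essentially the same approach as the paper: both plug the identity \eqref{stab2} into \eqref{stab1}, unwind $p\int\boldsymbol{c}\,\diver(\abs{\nabla u}^{p-2}\nabla u)\,\eta^2\phi_\e^2\,dx$ into the left-hand side of \eqref{78} by integration by parts, control the $\nabla\phi_\e$ error terms using the Damascelli--Sciunzi regularity and dominated convergence, and finally let $\e\downarrow0$. The only cosmetic difference is that the paper organizes the integration by parts through a Pohozaev-type pointwise identity, $-\diver(\abs{\nabla u}^px - p\boldsymbol{c}\abs{\nabla u}^{p-2}\nabla u)+(n-p)\abs{\nabla u}^p = p\,\diver(\abs{\nabla u}^{p-2}\nabla u)\boldsymbol{c}$, and then does a single integration by parts, whereas you carry out two direct integrations by parts with the intermediate identity $\abs{\nabla u}^{p-2}\nabla u\cdot\nabla\boldsymbol{c}=\abs{\nabla u}^p+\tfrac1p\,x\cdot\nabla(\abs{\nabla u}^p)$; this is the same underlying computation rearranged.
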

\begin{proof}
	We plug identity \eqref{stab2} into inequality \eqref{stab1}. Observe that \eqref{stab2} holds in the weak sense in $B_1\cap\{\abs{\nabla u}>\e\}$ and that all integrands in \eqref{stab1} are supported in $B_1\cap\{\abs{\nabla u}>\e\}$ thanks to the presence of the cut-off function $\phi_\e$. In this way, we obtain
	\begin{equation}\label{79}
	\begin{split}
	&\int_{B_1} p \,\diver\left(\abs{\nabla u}^{p-2}\nabla u\right)\boldsymbol{c}\eta^2\phi^2_\e\,dx
	\\
	&\hspace{1cm}\leq\int_{B_1} \boldsymbol{c}^2\Big\{\abs{\nabla u}^{p-2}\abs{\nabla\eta}^2+(p-2)\abs{\nabla u}^{p-4}\left(\nabla\eta\cdot\nabla u\right)^2	\Big\}\phi^2_\e\,dx+o(1),
	\end{split}
	\end{equation}
	where $o(1)$ goes to zero as $\e\downarrow0$.
	
	Now, using that $\nabla\boldsymbol{c}=\nabla u+D^2u\cdot x$, we use the following well-known Pohozaev-type identity in the weak sense in $\{\abs{\nabla u}>0\}$:
	\begin{equation*}
	\begin{split}
	&\hspace{-0.2cm}-\diver\big(\abs{\nabla u}^px-p\boldsymbol{c}\abs{\nabla u}^{p-2}\nabla u\big)\eta^2+(n-p)\abs{\nabla u}^p\eta^2
	\\
	&\hspace{2cm}=-\left(x\cdot\nabla\abs{\nabla u}^p\right)\eta^2-n\abs{\nabla u}^p\eta^2+p\abs{\nabla u}^{p-2}\nabla u\cdot\left\{\nabla u+D^2u\cdot x\right\}\eta^2
	\\
	&\hspace{3cm}+p\,\diver\big(\abs{\nabla u}^{p-2}\nabla u\big)\boldsymbol{c}\eta^2+(n-p)\abs{\nabla u}^p\eta^2
	\\
	&\hspace{2cm}=p \,\diver\left(\abs{\nabla u}^{p-2}\nabla u\right)\boldsymbol{c}\eta^2.
	\end{split}
	\end{equation*}
	Plugging this identity in \eqref{79}, we obtain
	\begin{equation*}
	\begin{split}
	&\hspace{-0.5cm}-\int_{B_1}\diver\big(\abs{\nabla u}^px-p\boldsymbol{c}\abs{\nabla u}^{p-2}\nabla u\big)\eta^2\phi^2_\e\,dx
	+(n-p)\int_{B_1}\abs{\nabla u}^p\eta^2\phi^2_\e\,dx
	\\
	&\hspace{1.2cm}\leq\int_{B_1}\boldsymbol{c}^2\Big\{\abs{\nabla u}^{p-2}\abs{\nabla \eta}^2+(p-2)\abs{\nabla u}^{p-4}\left(\nabla\eta\cdot\nabla u\right)^2\Big\}\phi^2_\e\,dx+o(1).
	\end{split}
	\end{equation*}
	We now integrate by parts the first term, controlling all the terms containing $\nabla\phi_\e$ with $o(1)$ by using \eqref{cutoff_2}, \eqref{funct_space2}, and dominated convergence. Recalling also that $\boldsymbol{c}=ru_r$, we get
	\begin{equation*}
	\begin{split}
	&\hspace{-0.315cm}\int_{B_1}2\abs{\nabla u}^pr\eta_{r}\eta\phi^2_\e\,dx-p\int_{B_1}ru_r\abs{\nabla u}^{p-2}\nabla u\cdot\nabla(\eta^2)\phi^2_\e\,dx 
	+(n-p)\int_{B_1}\abs{\nabla u}^p\eta^2\phi^2_\e\,dx
	\\
	&\hspace{1.2cm}\leq\int_{B_1}r^2u_r^2\Big\{\abs{\nabla u}^{p-2}\abs{\nabla\eta}^2+(p-2)\abs{\nabla u}^{p-4}\left(\nabla\eta\cdot\nabla u\right)^2
	\Big\}\phi^2_\e\,dx +o(1).
	\end{split}
	\end{equation*}
	
	Now, sending $ \e\downarrow0 $ and using dominated convergence (recall that $\nabla u\in L^p(B_1)$), all the four integrals above tend to the four integrals in \eqref{78}, but integrated on $B_1\cap\{\abs{\nabla u}>0\}$. Finally, notice that, since all integrands vanish in $\{\abs{\nabla u}=0\}$, it is equivalent to integrate over $B_1\cap\{\abs{\nabla u}>0\}$ or over $B_1$.
\end{proof}

We now take $\eta=\varphi\zeta$ in the previous lemma, with $\zeta$ a cut-off function in $B_{3\rho/2}$. In this way, $\varphi$ is not required to have compact support.

\begin{lemma}
	Let $u$ be a stable regular solution of $-\plaplacian u =f(u)$ in $B_1\subset\R^n$, with $f\in C^1(\R)$, and $\varphi$ a Lipschitz function in $B_1$. Then, for every $\rho\in(0,2/3) $ we have
	\begin{equation}\label{stab4}
	\begin{split}
	&\int_{B_\rho}2\abs{\nabla u}^pr\varphi_{r}\varphi\,dx-p\int_{B_\rho}ru_{r}\abs{\nabla u}^{p-2}\nabla u\cdot\nabla(\varphi^2)\,dx+(n-p)\int_{B_\rho}\abs{\nabla u}^p\varphi^2 \,dx
	\\
	&\hspace{1.7cm}-\int_{B_\rho}r^2u_r^2\abs{\nabla u}^{p-2}\abs{\nabla\varphi}^2\,dx-(p-2)\int_{B_\rho}r^2u_r^2\abs{\nabla u}^{p-4}\left(\nabla u\cdot\nabla\varphi\right)^2\,dx
	\\
	&\hspace{1cm}\leq C\int_{B_{3\rho/2}\setminus B_\rho}\abs{\nabla u}^p\Big\{\varphi^2+r\abs{\varphi\nabla\varphi}+r^2\abs{\nabla\varphi}^2\Big\}\,dx,
	\end{split}
	\end{equation}
	where $C$ is a constant depending only on $n$ and $p$.
\end{lemma}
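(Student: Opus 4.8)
The plan is to apply Lemma~\ref{lemma_stab3} with the test function $\eta=\varphi\zeta$, where $\zeta\in C_c^\infty(B_{3\rho/2})$ is a cut-off with $0\le\zeta\le1$, $\zeta\equiv1$ in $B_\rho$, and $\abs{\nabla\zeta}\le C/\rho$. Since $\rho<2/3$ gives $B_{3\rho/2}\subset B_1$, the function $\eta$ is bounded and Lipschitz with compact support in $B_1$, hence admissible in \eqref{78}. The guiding principle is that on $B_\rho$ one has $\zeta\equiv1$, so the bulk terms of \eqref{78} reproduce the corresponding integrals of \eqref{stab4} over $B_\rho$, while every contribution carrying a derivative of $\zeta$, or the difference between a power of $\zeta$ and $1$, is supported in the annulus $B_{3\rho/2}\setminus B_\rho$ and can be absorbed into the right-hand side of \eqref{stab4}.

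In more detail, I would expand $\nabla\eta=\zeta\nabla\varphi+\varphi\nabla\zeta$, $\eta_r=\zeta\varphi_r+\varphi\zeta_r$, $\nabla(\eta^2)=\zeta^2\nabla(\varphi^2)+2\varphi^2\zeta\nabla\zeta$, $\abs{\nabla\eta}^2=\zeta^2\abs{\nabla\varphi}^2+2\zeta\varphi\,\nabla\varphi\cdot\nabla\zeta+\varphi^2\abs{\nabla\zeta}^2$, and likewise $(\nabla u\cdot\nabla\eta)^2=\zeta^2(\nabla u\cdot\nabla\varphi)^2+2\zeta\varphi(\nabla u\cdot\nabla\varphi)(\nabla u\cdot\nabla\zeta)+\varphi^2(\nabla u\cdot\nabla\zeta)^2$, and insert all of this into \eqref{78}. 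The "diagonal" pieces — those with a factor $\zeta^2$ — each split into the integral over $B_\rho$ (where $\zeta\equiv1$), which gives exactly the corresponding term of \eqref{stab4} once the two terms on the right-hand side of \eqref{78} are moved to the left, plus a remainder over the annulus. On the annulus, using $r\le 3\rho/2$ together with the elementary bounds $u_r^2\le\abs{\nabla u}^2$ and $(\nabla u\cdot v)^2\le\abs{\nabla u}^2\abs{v}^2$, each such remainder integrand becomes a multiple of $\abs{\nabla u}^p$ times $\abs{\nabla\varphi}^2$ (hence of $r^2\abs{\nabla\varphi}^2$, since $r\ge\rho$ there) or times $\abs{\varphi\varphi_r}$, i.e.\ one of the quantities on the right-hand side of \eqref{stab4}. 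I would stress that these two elementary inequalities are precisely what makes the estimate uniform in $p$: for $p\in(1,2)$ the weights $\abs{\nabla u}^{p-2}$ and $\abs{\nabla u}^{p-4}$ are singular on $\{\nabla u=0\}$, but the combinations $u_r^2\abs{\nabla u}^{p-2}\abs{\nabla\varphi}^2\le\abs{\nabla u}^p\abs{\nabla\varphi}^2$ and $u_r^2\abs{\nabla u}^{p-4}(\nabla u\cdot\nabla\varphi)^2\le\abs{\nabla u}^p\abs{\nabla\varphi}^2$ are not.

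It remains to bound the "off-diagonal" pieces, namely every term containing $\nabla\zeta$ or $\zeta_r$; each of these is supported in $B_{3\rho/2}\setminus B_\rho$, where $r\abs{\nabla\zeta}\le C$ and $r^2\abs{\nabla\zeta}^2\le C$. Combining these with $\abs{u_r}\le\abs{\nabla u}$ and Cauchy-Schwarz on the inner products with $\nabla u$, every such term is dominated by $C\int_{B_{3\rho/2}\setminus B_\rho}\abs{\nabla u}^p\{\varphi^2+r\abs{\varphi\nabla\varphi}+r^2\abs{\nabla\varphi}^2\}\,dx$. Summing all the diagonal-$B_\rho$ contributions on the left-hand side and sending all the annulus remainders (of both the diagonal and off-diagonal type, and from both sides of \eqref{78}) to the right, one arrives at \eqref{stab4}; the resulting constant depends only on $n$, $p$, and the universal quantities $\sup(r\abs{\nabla\zeta})$ and $\sup(r^2\abs{\nabla\zeta}^2)$.

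The computation is routine and I do not anticipate a genuine obstacle — the only care needed is the bookkeeping: keeping track of signs when transferring terms across the inequality (recall that $n-p>0$ by hypothesis, while $p-2$ may be negative, so one bounds the remainders by their absolute values rather than trying to exploit signs), and recording into which slot — $\varphi^2$, $r\abs{\varphi\nabla\varphi}$, or $r^2\abs{\nabla\varphi}^2$ — each remainder falls. As noted right after Lemma~\ref{lemma_stab3}, all the integrands vanish a.e.\ on $\{\nabla u=0\}$, so integrating over $B_\rho$ or over $B_\rho\cap\{\abs{\nabla u}>0\}$ is equivalent and no integrability issue arises.
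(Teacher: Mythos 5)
Your proof follows exactly the paper's strategy: take $\eta=\varphi\zeta$ in Lemma~\ref{lemma_stab3}, expand $\eta_r\eta$, $\nabla(\eta^2)$, $\abs{\nabla\eta}^2$, and $\nabla\eta\cdot\nabla u$, identify the $\zeta^2$-pieces restricted to $B_\rho$ with the left-hand side of \eqref{stab4}, and absorb every annulus remainder into the right-hand side using $r\abs{\nabla\zeta}\le C$ on $B_{3\rho/2}\setminus B_\rho$ together with $\abs{u_r}\le\abs{\nabla u}$ and Cauchy--Schwarz. The only small blemish is the parenthetical suggesting you use $r\ge\rho$ to produce the factor $r^2$ next to $\abs{\nabla\varphi}^2$ --- that factor is already present in the corresponding remainder coming from the right-hand side of \eqref{78}, so no such step is needed (and as phrased it would go the wrong way), but this does not affect the soundness of the plan.
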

\begin{proof}
	We choose $\eta=\varphi\zeta$ in Lemma \ref{lemma_stab3}, where $\zeta\in C^{\infty}_c(B_1)$ is a cut-off function such that $ \zeta\equiv1 $ in $ B_\rho $ and $ \zeta\equiv0 $ outside $B_{3\rho/2}$. In this way, $ \eta_{|\partial B_1}=0 $ and no assumption is required on $ \varphi $, except from Lipschitz regularity. We compute
\begin{equation*}
\begin{aligned}
	\eta_{r}\eta&=\varphi_{r}\varphi\zeta^2+\varphi^2\zeta_{r}\zeta;
	\\
	\nabla(\eta^2)&=\zeta^2\nabla(\varphi^2)+\varphi^2\nabla(\zeta^2);
	\\
	\abs{\nabla\eta}^2&=\zeta^2\abs{\nabla\varphi}^2+\varphi^2\abs{\nabla\zeta}^2+2\varphi\zeta\nabla\varphi\cdot\nabla\zeta;
	\\
	\nabla\eta\cdot\nabla u&=\zeta\nabla\varphi\cdot\nabla u+\varphi\nabla\zeta\cdot\nabla u.
\end{aligned}
\end{equation*}
Hence, inequality \eqref{78} becomes
\begin{equation*}
	\begin{split}
	&\hspace{-0.1cm}\int_{B_1}2\abs{\nabla u}^pr\left(\varphi_{r}\varphi\zeta^2+\varphi^2\zeta_{r}\zeta\right)\,dx
	-p\int_{B_1}r\,u_{r}\abs{\nabla u}^{p-2}\nabla u\cdot\left(\zeta^2\nabla(\varphi^2)+\varphi^2\nabla(\zeta^2)\right)\,dx
	\\
	&\hspace{6cm}+(n-p)\int_{B_1}\abs{\nabla u}^p\varphi^2\zeta^2\,dx
	\\
	&\hspace{2cm}\leq\int_{B_1}r^2u_r^2\Big\{\abs{\nabla u}^{p-2}\big(\zeta^2\abs{\nabla\varphi}^2+\varphi^2\abs{\nabla\zeta}^2+2\,\varphi\,\zeta\nabla\varphi\cdot\nabla\zeta\big)
	\\
	&\hspace{5.5cm}+(p-2)\abs{\nabla u}^{p-4}\left(\zeta\nabla\varphi\cdot\nabla u+\varphi\nabla\zeta\cdot\nabla u\right)^2\Big\}\,dx.
	\end{split}
\end{equation*}

Finally, we collect all the integrals containing $\nabla\zeta$ and we put them on the right-hand side, noticing that these are integrals over $B_{3\rho/2}\setminus B_\rho$ and that $\abs{\nabla\zeta}\leq \frac{C}{\rho}\leq\frac{3C}{2r}$ in this set. Each of the remaining integrals is written as an integral over $B_\rho$ (where $\zeta=1$) plus an integral over $B_{3\rho/2}\setminus B_\rho$, which is placed on the right-hand side. In this way, we conclude \eqref{stab4}.
\end{proof}

We finally make the choice $\varphi(x)=r^{-a/2}$ after regularizing this power function to make it Lipschitz. Note that the integrals in the following statement are finite since $a<n$ and $\abs{\nabla u}$ is locally bounded (recall that $u\in C^1$ in $B_1$).

\begin{lemma}\label{lemma_stab5}
	 Let $u$ be a stable regular solution of $-\plaplacian u =f(u)$ in $B_1\subset\R^n$, with $f\in C^1(\R)$, and $a<n$. Then, for every $\rho\in(0,2/3) $ we have
	 \begin{equation}\label{stab5}
	 	\begin{split}
	 	&\hspace{-1cm}(n-p-a)\int_{B_\rho}r^{-a}\abs{\nabla u}^p\,dx+\left(pa-\frac{a^2}{4}\right)\int_{B_\rho}r^{-a}\abs{\nabla u}^{p-2}u_{r}^2\,dx
	 	\\
	 	&\hspace{3cm}-\frac{a^2}{4}\left(p-2\right)\int_{B_\rho}r^{-a}\abs{\nabla u}^{p-4}u_{r}^4\,dx
	 	\\
	 	&\hspace{0.5cm}\leq C\int_{B_{3\rho/2}\setminus B_\rho} r^{-a}\abs{\nabla u}^p\,dx,
	 	\end{split}
	 \end{equation}
	 for some positive constant $C$ depending only on $n$, $p$, and $a$.
\end{lemma}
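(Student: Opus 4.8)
The plan is to apply the previous lemma (inequality \eqref{stab4}) with the specific radial choice $\varphi(x)=r^{-a/2}$, after a routine regularization near the origin to ensure Lipschitz regularity (which is harmless since $a<n$ makes all the relevant integrals over $B_\rho$ convergent: $\abs{\nabla u}$ is bounded and $r^{-a}\in L^1_{\rm loc}$). So first I would compute the elementary differential quantities for this $\varphi$. Writing $r=\abs{x}$, we have $\nabla\varphi=-\frac{a}{2}r^{-a/2-1}\frac{x}{r}$, hence
\begin{equation*}
\varphi^2=r^{-a},\qquad \varphi_r\varphi=-\frac{a}{2}r^{-a-1},\qquad \abs{\nabla\varphi}^2=\frac{a^2}{4}r^{-a-2},\qquad \nabla u\cdot\nabla\varphi=-\frac{a}{2}r^{-a/2-1}u_r.
\end{equation*}
Also $\nabla(\varphi^2)=-a\,r^{-a-1}\frac{x}{r}$, so that $\nabla u\cdot\nabla(\varphi^2)=-a\,r^{-a-1}u_r$, and therefore the second term on the left of \eqref{stab4} becomes $-p\int_{B_\rho}r u_r\abs{\nabla u}^{p-2}(-a\,r^{-a-1}u_r)\,dx=pa\int_{B_\rho}r^{-a}\abs{\nabla u}^{p-2}u_r^2\,dx$.

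Next I would substitute these into \eqref{stab4} term by term. The first term gives $2\int_{B_\rho}\abs{\nabla u}^p r\,(-\frac{a}{2}r^{-a-1})\,dx=-a\int_{B_\rho}r^{-a}\abs{\nabla u}^p\,dx$; combined with the third term $(n-p)\int_{B_\rho}r^{-a}\abs{\nabla u}^p\,dx$, this produces the coefficient $(n-p-a)$ in front of $\int_{B_\rho}r^{-a}\abs{\nabla u}^p\,dx$. The fourth term on the left of \eqref{stab4}, namely $-\int_{B_\rho}r^2 u_r^2\abs{\nabla u}^{p-2}\abs{\nabla\varphi}^2\,dx$, equals $-\frac{a^2}{4}\int_{B_\rho}r^{-a}\abs{\nabla u}^{p-2}u_r^2\,dx$, which merges with the $pa$ coefficient from the second term to give $(pa-\frac{a^2}{4})\int_{B_\rho}r^{-a}\abs{\nabla u}^{p-2}u_r^2\,dx$. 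The fifth term, $-(p-2)\int_{B_\rho}r^2u_r^2\abs{\nabla u}^{p-4}(\nabla u\cdot\nabla\varphi)^2\,dx$, becomes $-\frac{a^2}{4}(p-2)\int_{B_\rho}r^{-a}\abs{\nabla u}^{p-4}u_r^4\,dx$ (using $(\nabla u\cdot\nabla\varphi)^2=\frac{a^2}{4}r^{-a-2}u_r^2$ and multiplying by $r^2u_r^2$). This reproduces exactly the three terms on the left-hand side of \eqref{stab5}.

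For the right-hand side, on $B_{3\rho/2}\setminus B_\rho$ we have $\varphi^2=r^{-a}$, $r\abs{\varphi\nabla\varphi}=\frac{a}{2}r^{-a}$, and $r^2\abs{\nabla\varphi}^2=\frac{a^2}{4}r^{-a}$, so the bracket $\{\varphi^2+r\abs{\varphi\nabla\varphi}+r^2\abs{\nabla\varphi}^2\}$ is bounded by $C(a)\,r^{-a}$; absorbing this constant into $C$ yields the right-hand side $C\int_{B_{3\rho/2}\setminus B_\rho}r^{-a}\abs{\nabla u}^p\,dx$ of \eqref{stab5}. The only genuine subtlety is the regularization of $\varphi=r^{-a/2}$ to a Lipschitz function, so that \eqref{stab4} is literally applicable; I would replace $r^{-a/2}$ by $\min(r,\delta)^{-a/2}$-type truncations (or a smooth variant), check that the extra error terms supported in $B_\delta$ vanish as $\delta\downarrow0$ — which they do because $a<n$ guarantees $r^{-a}\abs{\nabla u}^p$, $r^{-a-1}\abs{\nabla u}^p$, and $r^{-a-2}\abs{\nabla u}^p$ are all integrable near the origin once $\abs{\nabla u}$ is bounded (noting also $u_r^2\le\abs{\nabla u}^2$, so that the $u_r^2 r^{-a-2}\cdot r^2$ and $u_r^4 r^{-a-2}\cdot r^2$ terms are controlled by $r^{-a}\abs{\nabla u}^p$ up to powers of $\abs{\nabla u}$) — and then pass to the limit by dominated convergence. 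This truncation bookkeeping is the main, though routine, obstacle; the algebraic substitution itself is mechanical.
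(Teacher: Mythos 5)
Your proof is correct and follows exactly the same route as the paper: plug $\varphi(x)=r^{-a/2}$ (regularized near the origin as a Lipschitz truncation, then let the truncation parameter go to zero by dominated convergence, using $a<n$ and boundedness of $\nabla u$) into inequality \eqref{stab4}, substitute the elementary derivatives of $\varphi$, and observe that on the annulus $B_{3\rho/2}\setminus B_\rho$ the bracket $\{\varphi^2+r\abs{\varphi\nabla\varphi}+r^2\abs{\nabla\varphi}^2\}$ is comparable to $r^{-a}$. The paper states this proof more tersely, but the computation you carry out term by term is precisely the one it intends.
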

\begin{proof}
	In \eqref{stab4}, we choose $ \varphi(x)=r^{-a/2} $, with $a<n$. Since $\varphi$ may not be Lipschitz, the computations must be done with a regularization of $\varphi$ in a small neighborhood of the origin. For instance, for $a>0$ (that is how we will apply later the lemma), we take $\varphi_\e(x):=\min\{r^{-a/2},\e^{-a/2} \}$. Since all terms in the proof are given by integrable functions (recall that $u\in C^1(\overline{B}_\rho)$), by dominated convergence we can let $\e\downarrow0$ in all the integrals. For this reason, we directly write the computations with $\varphi$ instead of $\varphi_\e$.
	
	Now, \eqref{stab5} follows directly from \eqref{stab4} using that
	\begin{align*}
		\nabla\varphi&=-\frac{a}{2} r^{-\frac{a}{2}-2}x, &\qquad \varphi_{r}&=-\frac{a}{2} r^{-\frac{a}{2}-1}, 
		\\
		\nabla(\varphi^2)&=-a r^{-a-2}x, &\qquad \abs{\nabla\varphi}^2&=\frac{a^2}{4}r^{-a-2},
	\end{align*}
	and observing that $ r=\abs{x}$ is bounded from above and from below in $\in B_{3\rho/2}\setminus B_\rho $.
\end{proof}

We are now ready to give the

\begin{proof}[Proof of Lemma \ref{lemma_stability}]
	We assume $n>p$, as well as that $n$ and $p$ satisfy~\eqref{dimension}.	
	Choosing $a=n-p<n$ in \eqref{stab5}, we get
	\begin{equation*}
	\begin{split}
	&\hspace{-0.8cm}\left(p(n-p)-\left(\frac{n-p}{2}\right)^2\right)\int_{B_\rho}r^{p-n}\abs{\nabla u}^{p-2}u_{r}^2\,dx
	\\
	&\hspace{0.7cm}-(p-2)\left(\frac{n-p}{2}\right)^2\int_{B_\rho}r^{p-n}\abs{\nabla u}^{p-4}u_{r}^4\,dx \leq C\int_{B_{3\rho/2}\setminus B_\rho}r^{p-n}\abs{\nabla u}^p\,dx.
	\end{split}
	\end{equation*}
	
Now, if $ p\geq2 $, we use $ u_{r}^2\leq\abs{\nabla u}^2 $ to obtain
\begin{equation*}
\begin{split}
&\hspace{-0.5cm}\left(p(n-p)-\left(\frac{n-p}{2}\right)^2-(p-2)\left(\frac{n-p}{2}\right)^2\right)\int_{B_\rho}r^{p-n}\abs{\nabla u}^{p-2}u_{r}^2\,dx 
\\ 
&\hspace{5cm}\leq C\rho^{p-n}\int_{B_{3\rho/2}\setminus B_\rho}\abs{\nabla u}^p\,dx.
\end{split}
\end{equation*}
Instead, if $ p\in(1,2) $, we have
\begin{equation*}
\left(p(n-p)-\left(\frac{n-p}{2}\right)^2\right)\int_{B_\rho}r^{p-n}\abs{\nabla u}^{p-2}u_{r}^2\,dx \leq C\rho^{p-n}\int_{B_{3\rho/2}\setminus B_\rho}\abs{\nabla u}^p\,dx.
\end{equation*}

	Summarizing, if $n$ and $p$ satisfy \eqref{dimension}, then
	\[
	\int_{B_\rho}r^{p-n}\abs{\nabla u}^{p-2}u_{r}^2\,dx
	\leq C\rho^{p-n}\int_{B_{3\rho/2}\setminus B_\rho}\abs{\nabla u}^p\,dx,
	\]
	where $C$ depends only on $n$ and $p$.
\end{proof}

\section{Higher integrability}\label{sec_higher}
In this section we prove the key steps towards Theorem \ref{thm_higher_int}. We divide the section into two separate lemmata.
The first one establishes bounds on the weighted integrals of the second derivatives of a stable solution in terms of the $L^p$-norm of its gradient. This lemma, as well as the second one on higher integrability for the gradient, will be improved later in Section~\ref{sec_interior} by establishing the better control with the $L^1$-norm of $u$ in the right-hand side stated in~\eqref{intro_IIder_p>2} and~\eqref{intro_IIder_p<2}.

\begin{lemma}\label{lemma_II_der}
	Let $u$ be a stable regular solution of $ -\Delta_p u=f(u) $ in $ B_1\subset\R^n $, with $f\in C^1(\R)$ nonnegative. 
	
	Then,
	\begin{equation}
	\label{IIder_p>2}
	\int_{B_{1/2}}\abs{\nabla u}^{p-2}\lvert D^2u\rvert\,dx\leq C\norm{\nabla u}_{L^p(B_1)}^{p-1} \qquad \text{if}\,\,p\geq2 
	\end{equation}
	and
	\begin{equation}
	\label{IIder_p<2}
	\int_{B_{1/2}}\lvert D^2u\rvert\,dx\leq C\norm{\nabla u}_{L^p(B_1)} \qquad \text{if}\,\,p\in(1,2),
	\end{equation}
	where $C$ depends only on $n$ and $p$.
\end{lemma}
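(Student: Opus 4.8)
The plan is to derive these bounds from the Sternberg--Zumbrun-type inequality of Theorem~\ref{thm_SZ_stability}, which controls $\int |\nabla u|^{p-2}|\nabla_T|\nabla u||^2\eta^2$ and $\int |A|^2|\nabla u|^p\eta^2$ by $(p-1)\int |\nabla u|^p|\nabla\eta|^2$. The first observation is that, in the regular set $\{|\nabla u|>0\}$, the full Hessian $D^2u$ decomposes into a purely tangential part, a ``mixed'' part built from $\nabla_T|\nabla u|$, and a radial-in-the-normal-direction part which, by the equation $-\Delta_p u = f(u)$ together with $f\ge 0$, has a sign and is controlled. Concretely, writing $e = \nabla u/|\nabla u|$, one has the standard identity relating $\Delta u$, $\partial_{ee}u$, and the mean curvature $H$ of the level set: $\Delta u = \partial_{ee} u + |\nabla u| H$, while $|D^2 u|^2$ is comparable to $(\partial_{ee}u)^2 + |\nabla_T|\nabla u||^2 + |\nabla u|^2 |A|^2$ (the tangential-tangential block of $D^2u$ is exactly $|\nabla u|$ times the second fundamental form). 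So $|D^2u|$ is pointwise bounded by $C(|\partial_{ee}u| + |\nabla_T|\nabla u|| + |\nabla u|\,|A|)$.

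\smallskip

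Next I would estimate each of the three pieces against $\|\nabla u\|_{L^p(B_1)}$ after multiplying by the weight and a cut-off $\eta\in C_c^\infty(B_1)$ with $\eta\equiv 1$ on $B_{1/2}$. For the tangential-curvature and mixed terms this is where Theorem~\ref{thm_SZ_stability} enters directly, combined with Cauchy--Schwarz: for $p\ge 2$, $\int |\nabla u|^{p-2}|\nabla_T|\nabla u||\,\eta^2 \le \big(\int |\nabla u|^{p-2}|\nabla_T|\nabla u||^2\eta^2\big)^{1/2}\big(\int|\nabla u|^{p-2}\eta^2\big)^{1/2}$, and similarly $\int |\nabla u|^{p-2}\cdot|\nabla u|\,|A|\,\eta^2 = \int |\nabla u|^{(p-2)/2}\big(|\nabla u|^{p/2}|A|\big)\eta^2$, both factors now coming under control by \eqref{SZ_formula} and by $\int |\nabla u|^{p-2}\eta^2 \lesssim \|\nabla u\|_{L^p(B_1)}^{p-2}$ (Hölder, noting $p-2<p$ so a ball of finite measure suffices; for $p\in(1,2)$ one uses instead a slightly different Hölder pairing to land on $\|\nabla u\|_{L^p}$, exploiting \eqref{firstDS} or Tolksdorf's $W^{1,2}_{\mathrm{loc}}$ bound to absorb the negative power of $|\nabla u|$ near the critical set). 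The term with $\partial_{ee}u$ is handled via the equation: since $-\Delta_p u = f(u)\ge 0$, one writes $\Delta_p u = |\nabla u|^{p-2}\Delta u + (p-2)|\nabla u|^{p-2}\partial_{ee}u$, solve for $|\nabla u|^{p-2}\partial_{ee}u$ in terms of $\Delta_p u$ (an $L^1$-controllable quantity by integration by parts against $\eta^2$, exactly as flagged in the footnote and in Lemma~\ref{positive_plap}) and the mean-curvature term $|\nabla u|^{p-1}H$, which is again bounded by the curvature integral from \eqref{SZ_formula} through Cauchy--Schwarz.

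\smallskip

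A cleaner route for the $\partial_{ee}u$ contribution, which I would probably take, is to avoid pointwise second-derivative gymnastics and instead integrate by parts at the level of the equation: $\int_{B_{1/2}} |\nabla u|^{p-2}|D^2u| \le C\int |\nabla u|^{p-2}|\nabla_T|\nabla u||\eta^2 + C\int |\nabla u|^{p-1}|A|\eta^2 + C\int |\Delta_p u|\,\eta^2$, where the last integral equals $\int f(u)\eta^2 = \int |\nabla u|^{p-2}\nabla u\cdot\nabla(\eta^2) \le C\|\nabla u\|_{L^p(B_1)}^{p-1}$ by the weak formulation and Hölder (this is precisely the content of Lemma~\ref{positive_plap}). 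Combining the three estimates gives \eqref{IIder_p>2}; for \eqref{IIder_p<2}, the weight $|\nabla u|^{p-2}$ blows up near $\{\nabla u = 0\}$, so one uses Tolksdorf's result $\nabla u\in W^{1,2}_{\mathrm{loc}}$ to write $\int |D^2u| \le |B_{1/2}|^{1/2}\big(\int |D^2u|^2\big)^{1/2}$ and then controls $\int |D^2u|^2$ in the regular set by the same three pieces, now pairing the curvature term as $\int |A|^2|\nabla u|^p\eta^2$ times $\int |\nabla u|^{-p}$ type quantities only where $|\nabla u|$ is bounded below — more honestly, one splits into $\{|\nabla u|\le 1\}$ and $\{|\nabla u|>1\}$ and on the bad set uses \eqref{firstDS} directly while on the good set the weight is harmless.

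\smallskip

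The main obstacle I expect is the degenerate set $\{|\nabla u| = 0\}$ when $p\in(1,2)$: the weight $|\nabla u|^{p-2}$ is singular there and the level-set geometry ($\nabla_T$, $A$, $\partial_{ee}u$) is only defined in $\{|\nabla u|>0\}$, so one must carefully check that all integrands extend by zero (or are otherwise controlled) across the critical set — this is exactly the kind of bookkeeping set up in Remark~\ref{rmk_integrability}, and the clean statement there that the integrals over $B_\rho$ and over $B_\rho\cap\{|\nabla u|>0\}$ coincide is what makes the argument go through. The secondary technical point is getting the Hölder exponents to close so that only $\|\nabla u\|_{L^p(B_1)}$ (to the power $p-1$ for $p\ge 2$, power $1$ for $p<2$) appears, with no leftover dependence on higher norms; this forces the specific pairings indicated above and is the reason the two regimes $p\ge 2$ and $p\in(1,2)$ must be treated separately.
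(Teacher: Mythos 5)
Your decomposition $\lvert D^2u\rvert \lesssim \lvert\partial_{ee}u\rvert + \lvert\nabla_T\lvert\nabla u\rvert\rvert + \lvert\nabla u\rvert\lvert A\rvert$ and the treatment of the case $p\geq 2$ are, in effect, the paper's argument. The paper packages the tangential-mixed block as $D^2u - D^2u[\nu,\nu]\,\nu\otimes\nu$ and controls its weighted $L^2$ norm via Theorem~\ref{thm_SZ_stability} together with the Sternberg--Zumbrun identity (giving \eqref{84}), then passes to $L^1$ by the same Cauchy--Schwarz/H\"older pairing you indicate; it controls $\lvert\nabla u\rvert^{p-2}D^2u[\nu,\nu]$ exactly through the identity $\Delta_p u=(p-1)\lvert\nabla u\rvert^{p-2}D^2u[\nu,\nu]+\lvert\nabla u\rvert^{p-2}\textnormal{tr}\left(D^2u_{|\nu^\bot\otimes\nu^\bot}\right)$, the trace bound \eqref{trace_ineq}, and $\int_{B_{1/2}}\abs{\Delta_p u}\leq C\|\nabla u\|_{L^p(B_1)}^{p-1}$ from Lemma~\ref{positive_plap}. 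So for $p\geq2$ your proposal closes.

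For $p\in(1,2)$, however, your sketch has a genuine gap. Both alternatives you propose lean on \eqref{firstDS} or Tolksdorf's $\nabla u\in W^{1,2}_{\rm loc}$ as if these were \emph{quantitative} bounds; they are only finiteness statements for general regular solutions and give no control of $\int\lvert D^2u\rvert^2$ by a power of $\|\nabla u\|_{L^p(B_1)}$, which is exactly the point of the lemma. In particular the step $\int\lvert D^2u\rvert\leq \lvert B_{1/2}\rvert^{1/2}\big(\int\lvert D^2u\rvert^2\big)^{1/2}$ just replaces the desired estimate by a harder one, and the split along $\{\lvert\nabla u\rvert\leq 1\}$ leaves you on the bad set with a qualitative bound only. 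The delicate issue, which your sketch does not resolve, is that after dividing the identity by $\lvert\nabla u\rvert^{p-2}$ the troublesome term becomes $\lvert\nabla u\rvert^{2-p}\lvert\Delta_p u\rvert$, whose extra weight blows up where $\lvert\nabla u\rvert$ is large and is not handled by Lemma~\ref{positive_plap}. The paper's route uses the sign $-\Delta_p u\geq 0$ twice, in a two-step argument: first, dropping the absolute value, $(p-1)D^2u[\nu,\nu]\leq -\textnormal{tr}\left(D^2u_{|\nu^\bot\otimes\nu^\bot}\right)$ gives the \emph{one-sided} bound $\int D^2u[\nu,\nu]\,\eta^2\leq C\|\nabla u\|_{L^p(B_1)}$; second, rewriting $-\lvert\nabla u\rvert^{2-p}\Delta_p u=(2-p)D^2u[\nu,\nu]-\Delta u$, integrating, using that one-sided bound, and integrating $\Delta u$ by parts yields $\int\lvert\nabla u\rvert^{2-p}\lvert\Delta_p u\rvert\,\eta^2\leq C\|\nabla u\|_{L^p(B_1)}$; only then does one isolate $D^2u[\nu,\nu]$ in the identity, take absolute values, and combine with \eqref{85}. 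This two-step use of the sign, not a H\"older split across the critical set, is what makes the $p\in(1,2)$ case close.
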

\begin{proof} First, by the regularity results commented in Remark \ref{rmk_integrability} and Stampacchia's theorem --- see \cite[Theorem 6.19]{LL} --- it is enough to show \eqref{IIder_p>2}-\eqref{IIder_p<2} with the integrals in their left-hand sides computed over $B_{1/2}\cap\{\abs{\nabla u}>0\}$ instead of $B_{1/2}$. In fact, as we saw in the remark, the value of the integrals does not change after changing the integration set in this way.
	
	We choose a nonnegative function $ \eta\in C^\infty_c(B_1)$ such that $ \eta\equiv1 $ in $ B_{1/2} $. Theorem \ref{thm_SZ_stability} gives
	\begin{equation}\label{789}
	\int_{B_1\cap\{|\nabla u|>0\}}\abs{\nabla u}^{p-2}\left\{\abs{\nabla_T\abs{\nabla u}}^2+\abs{A}^2\abs{\nabla u}^2 \right\}\eta^2\,dx
	\leq C\int_{B_1}\abs{\nabla u}^p\,dx,	
	\end{equation}
	where $C$ depends only on $ n $ and $ p $. We now use the well-known identity 
	\begin{equation*}
	\sum_{i,j}u_{ij}^2-\sum_i\left(\sum_ju_{ij}\frac{u_j}{\abs{\nabla u}}\right)^2=\abs{\nabla_T\abs{\nabla u}}^2+\abs{A}^2\abs{\nabla u}^2 ,
	\end{equation*}
	which is proved in \cite[Lemma 2.1]{SZ} --- see also \cite[Proposition 2.2]{C}. Plugging it in~\eqref{789}, we control\footnote{For this, express the symmetric matrix $D^2u$ in an orthonormal basis having $\nabla u/\abs{\nabla u}$ as last vector and take into account that the last column will be equal to the last row.}  the weighted integral of all the second derivatives of $u$, except from $D^2u[\nu,\nu]$, with a different constant $C$ in the right-hand side of \eqref{789}. Here and throughout the proof, $\nu$ denotes the normal vector to the level sets of $u$, i.e., $\nu=\nabla u/\abs{\nabla u}$, which is well-defined where $\nabla u\neq0$. As a consequence, we get
	\begin{equation}\label{84}
	\int_{B_{1}\cap\{\abs{\nabla u}>0 \}}\abs{\nabla u}^{p-2}\lvert D^2u-D^2u[\nu,\nu]\nu\otimes\nu\rvert^2\eta^2\,dx\leq C\int_{B_1}\abs{\nabla u}^p\,dx.
	\end{equation}
	
	\medskip
	
	\textit{Case 1.}
	Let us first assume $p\geq2$. We need to show that
	\begin{equation}\label{new_II_p>2}
	\int_{B_{1/2}\cap\{\abs{\nabla u}>0 \}}\abs{\nabla u}^{p-2}\lvert D^2u\rvert\,dx\leq C\norm{\nabla u}_{L^p(B_1)}^{p-1}.
	\end{equation}
	From \eqref{84}, using Hölder's inequality (since $p\geq2$), we deduce
	\begin{equation}\label{00}
	\int_{B_{1}\cap\{\abs{\nabla u}>0 \}}\abs{\nabla u}^{p-2}\lvert D^2u-D^2u[\nu,\nu]\nu\otimes\nu\rvert\eta^2\,dx\leq C\left(\int_{B_1}\abs{\nabla u}^p\,dx\right)^\frac{p-1}{p}.
	\end{equation}
		
	Now, we use that $ -\Delta_p u\geq0$ in order to bound the weighted integral of~$\lvert D^2u[\nu,\nu]\rvert$. 
	We claim that
	\begin{equation}\label{claim3}
	\int_{B_{1}\cap\{\abs{\nabla u}>0 \}}\abs{\nabla u}^{p-2}\lvert D^2u[\nu,\nu]\rvert\eta^2\,dx
	\leq C \norm{\nabla u}_{L^p(B_1)}^{p-1},
	\end{equation}
	where $C$ depends only on $n$ and $p$. To establish this, we start from the identity
	\begin{equation}\label{99}
	\begin{aligned}
	\plaplacian u &=\diver\left(\abs{\nabla u}^{p-2}\nabla u\right)=\abs{\nabla u}^{p-2}\Delta u+(p-2)\abs{\nabla u}^{p-2}\sum_{i,j} \frac{u_{ij}u_iu_j}{\abs{\nabla u}^2}
	\\
	&= (p-1)\abs{\nabla u}^{p-2}D^2u[\nu,\nu]+\abs{\nabla u}^{p-2}\textnormal{tr}\left(D^2u_{|\nu^\bot\otimes\nu^\bot}\right).
	\end{aligned}
	\end{equation}
	From this, we express $(p-1)\abs{\nabla u}^{p-2}D^2u[\nu,\nu]$ as the difference of two terms, we then take absolute values, and we integrate over $B_{1/2}\cap\{\abs{\nabla u}>0\}$. Using that
	\begin{equation}\label{trace_ineq}
	\lvert\textnormal{tr}\left(D^2u_{|\nu^\bot\otimes\nu^\bot}\right)\rvert\leq
	C\lvert D^2u-D^2u[\nu,\nu]\nu\otimes\nu\rvert
	\end{equation}
	we conclude
	\begin{equation}\label{97}
	\begin{split}
	&(p-1)\int_{B_{1/2}\cap\{\abs{\nabla u}>0 \}}\abs{\nabla u}^{p-2}\lvert D^2u[\nu,\nu]\rvert\,dx
	\\
	&\hspace{1cm}\leq
	\int_{B_{1/2}\cap\{\abs{\nabla u}>0 \}}\abs{\plaplacian u}\,dx
	+
	C\int_{B_{1/2}\cap\{\abs{\nabla u}>0 \}}\abs{\nabla u}^{p-2}\lvert D^2u-D^2u[\nu,\nu]\nu\otimes\nu\rvert\,dx.
	\end{split}
	\end{equation}
	We now use Lemma \ref{positive_plap} (which is based on the fact that $-\plaplacian u\geq0$) and Hölder's inequality to get
	\begin{equation}\label{01}
	\int_{B_{1/2}}\abs{\Delta_pu}\,dx\leq C \norm{\nabla u}_{L^p(B_1)}^{p-1}.
	\end{equation}
	At this point, using \eqref{97}, \eqref{00}, and \eqref{01}, we obtain \eqref{claim3}.
	Finally, combining~\eqref{00} with~\eqref{claim3}, the proof of \eqref{new_II_p>2} is finished. 
	
	\medskip
	
	\textit{Case 2.}
	Now we assume $ p\in(1,2) $. We need to prove that
	\begin{equation}\label{new_II_p<2}
	\int_{B_{1/2}\cap\{\abs{\nabla u}>0 \}}\lvert D^2u\rvert\,dx\leq C\norm{\nabla u}_{L^p(B_1)}.
	\end{equation} 
	First, choosing again a function $\eta\in C^\infty_c(B_1)$ with $\eta\equiv1$ in $ B_{1/2} $, we observe that
	\begin{equation*}
	\begin{split}
	&\int_{B_{1}\cap\{\abs{\nabla u}>0 \}}\lvert D^2u-D^2u[\nu,\nu]\nu\otimes\nu\rvert\eta^2\,dx
	\\
	&\hspace{0.5cm}\leq
	\left(\int_{B_{1}\cap\{\abs{\nabla u}>0 \}}\abs{\nabla u}^{p-2}\lvert D^2u-D^2u[\nu,\nu]\nu\otimes\nu\rvert^2\eta^2\,dx\right)^\frac12\left(\int_{B_1}\abs{\nabla u}^{2-p}\eta^2\,dx\right)^\frac12.
	\end{split}
	\end{equation*}
	Now, using \eqref{84} and Hölder's inequality on the last integral (note that $p/(2-p)>1$), we obtain
	\begin{equation}\label{85}
	\int_{B_{1}\cap\{\abs{\nabla u}>0 \}}\lvert D^2u-D^2u[\nu,\nu]\nu\otimes\nu\rvert\eta^2\,dx
	\leq
	C\left(\int_{B_1}\abs{\nabla u}^p\,dx\right)^\frac1p.
	\end{equation}
	
	At this point, in order to prove \eqref{new_II_p<2}, it only remains to control the $L^1 $-norm of $D^2u[\nu,\nu]$. Since we are working in the set of regular points $ \{x\in B_1: \abs{\nabla u}>0\} $, we can write identity~\eqref{99} as
	\begin{equation}\label{100}
	\abs{\nabla u}^{2-p}\plaplacian u=(p-1)D^2u[\nu,\nu]+\textnormal{tr}\left(D^2u_{|\nu^\bot\otimes\nu^\bot}\right).
	\end{equation}
	Having $\plaplacian u\leq0 $ and $ \lvert\textnormal{tr}\left(D^2u_{|\nu^\bot\otimes\nu^\bot}\right)\rvert\leq C\lvert D^2u-D^2u[\nu,\nu]\nu\otimes\nu\rvert$ as in Case 1, from \eqref{85} and \eqref{100} we deduce that
	\begin{equation}\label{up}
	\int_{B_{1}\cap\{\abs{\nabla u}>0 \}}D^2u[\nu,\nu]\eta^2\,dx\leq C\norm{\nabla u}_{L^p(B_1)}.
	\end{equation}
	Next, since $\Delta u=\textnormal{tr}\left(D^2u_{|\nu^\bot\otimes\nu^\bot}\right)+D^2u[\nu,\nu]$, we can reformulate \eqref{100} as
	\[
	-\abs{\nabla u}^{2-p}\plaplacian u = (2-p) D^2u[\nu,\nu] - \Delta u.
	\]
	After multiplying this identity by $\eta^2$ and integrating it over $B_1\cap\{\abs{\nabla u}>0\}$, we can use~\eqref{up}, an integration by parts, and Hölder's inequality, to get
	\begin{equation}\label{96}
	\begin{split}
	\int_{B_1\cap\{\abs{\nabla u}>0\}}\abs{\nabla u}^{2-p}\abs{\plaplacian u}\,\eta^2\,dx 
	&=-\int_{B_1\cap\{\abs{\nabla u}>0\}}\abs{\nabla u}^{2-p}\plaplacian u\,\eta^2\,dx 
	\\
	&= (2-p)\int_{B_1\cap\{\abs{\nabla u}>0\}}D^2u[\nu,\nu]\eta^2\,dx-\int_{B_1}\Delta u\,\eta^2\,dx
	\\
	&\leq
	C\norm{\nabla u}_{L^p(B_1)}+\int_{B_1}\nabla u\cdot\nabla\eta^2\,dx\leq C\norm{\nabla u}_{L^p(B_1)}.
	\end{split}
	\end{equation}
	
	Finally, isolating $D^2u[\nu,\nu]$ in identity \eqref{100}, taking absolute values, and using the bounds \eqref{trace_ineq}, \eqref{85}, and~\eqref{96}, we conclude
	\begin{equation*}
	\begin{split}
	&\int_{B_{1}\cap\{\abs{\nabla u}>0 \}}\lvert D^2u[\nu,\nu]\rvert\eta^2\,dx
	\\
	&\hspace{1.2cm}\leq C\int_{B_{1}\cap\{\abs{\nabla u}>0 \}}\lvert D^2u-D^2u[\nu,\nu]\nu\otimes\nu\rvert\eta^2\,dx
	+\int_{B_1\cap\{\abs{\nabla u}>0 \}}\abs{\nabla u}^{2-p}\abs{\plaplacian u}\,\eta^2\,dx
	\\
	&\hspace{1.2cm}\leq C\norm{\nabla u}_{L^p(B_1)}.
	\end{split}
	\end{equation*}
	This and \eqref{85} finish the proof of \eqref{new_II_p<2}.
\end{proof}

The following lemma establishes a control over the $L^{p+\gamma}$-norm of the gradient of $u$ in terms of its $L^p$-norm in a larger ball, for some $\gamma>0$, proving inequality~\eqref{intro_Lp_controls_p+gamma}.
An alternative proof of this result, relying on the Michael-Simon and Allard inequality, is presented in Appendix~\ref{appendix_alt_proof}. The ranges of values of $\gamma$ obtained in both proofs will be given later in Remark~\ref{rmk_gamma}.

\begin{lemma}\label{lemma_Lpgamma}
	Let $u$ be a stable regular solution of $ -\Delta_pu=f(u) $ in $ B_1\subset\R^n $, with $ f\in C^1(\R) $ nonnegative. 
	Then,
	\begin{equation}\label{19}
	\norm{\nabla u}_{L^{p+\gamma}(B_{1/2})}\leq C\norm{\nabla u}_{L^p(B_1)},
	\end{equation}
	for some positive constants $\gamma$ and $C$ depending only on $n$ and $p$.
\end{lemma}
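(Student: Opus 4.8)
The plan is to follow the scheme of \cite[Proposition 2.4]{CFRS}, in two steps. First I would prove the $L^1$ bound
\[
\norm{\Delta_{p+1}u}_{L^1(B_{3/4})}\le C\norm{\nabla u}_{L^p(B_1)}^p ,
\]
which is \eqref{pplus1} with $\norm{u}_{L^1(B_1)}$ replaced by $\norm{\nabla u}_{L^p(B_1)}$; then I would read $u$ as a weak solution of the $(p+1)$-Laplacian equation $-\Delta_{p+1}u=-\Delta_{p+1}u$ with $L^1$ right-hand side and upgrade the integrability of $\nabla u$ by the Boccardo--Gallou\"et truncation method. As in Section~\ref{sec_stability}, by the regularity recalled in Remark~\ref{rmk_integrability} and Stampacchia's theorem it suffices to work in the regular set $\{\abs{\nabla u}>0\}$, where $u\in W^{3,q}_{\mathrm{loc}}$ for every $q<\infty$.

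\emph{Step 1.} In the regular set one has the pointwise identity
\[
\Delta_{p+1}u=\diver(\abs{\nabla u}^{p-1}\nabla u)=\abs{\nabla u}\,\Delta_p u+\abs{\nabla u}^{p-1}D^2u[\nu,\nu],\qquad \nu:=\nabla u/\abs{\nabla u},
\]
obtained by writing $\abs{\nabla u}^{p-1}\nabla u=\abs{\nabla u}\cdot\abs{\nabla u}^{p-2}\nabla u$ and using $\nabla\abs{\nabla u}\cdot\nabla u=\abs{\nabla u}\,D^2u[\nu,\nu]$. Since $\Delta_p u=-f(u)\le 0$, this together with \eqref{99} (equivalently $(p-1)\abs{\nabla u}^{p-1}D^2u[\nu,\nu]=-\abs{\nabla u}f(u)-\abs{\nabla u}^{p-1}\mathrm{tr}(D^2u_{|\nu^\bot\otimes\nu^\bot})$) reduces the $L^1$ bound on $\Delta_{p+1}u$ to controlling $\int_{B_{3/4}}f(u)\abs{\nabla u}\,dx$ and $\int_{B_{3/4}}\abs{\nabla u}^{p-1}\abs{\mathrm{tr}(D^2u_{|\nu^\bot\otimes\nu^\bot})}\,dx$. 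The second integral is estimated by $C\norm{\nabla u}_{L^p(B_1)}^p$ via Cauchy--Schwarz, \eqref{trace_ineq} and the weighted estimate \eqref{84}, which is a direct consequence of Theorem~\ref{thm_SZ_stability} and the identity of \cite[Lemma~2.1]{SZ} (so no circularity with Lemma~\ref{lemma_II_der} is introduced). For the first one I would test the weak formulation of $-\Delta_p u=f(u)$ with $\xi=\abs{\nabla u}\,\eta^2\phi_\e^2$, where $\eta$ is a cut-off equal to $1$ on $B_{3/4}$ with support in $B_{7/8}$ and $\phi_\e=\phi(\abs{\nabla u}/\e)$ is the cut-off of Section~\ref{sec_stability} that keeps us in the regular set; expanding $\nabla\xi$, using $\nabla\abs{\nabla u}\cdot\nabla u=\abs{\nabla u}D^2u[\nu,\nu]$ and then \eqref{99} produces the self-improving identity
\[
\tfrac{p}{p-1}\!\int f(u)\abs{\nabla u}\eta^2\phi_\e^2\,dx=-\tfrac{1}{p-1}\!\int\abs{\nabla u}^{p-1}\mathrm{tr}(D^2u_{|\nu^\bot\otimes\nu^\bot})\eta^2\phi_\e^2\,dx+\int\abs{\nabla u}^{p-1}\nabla u\cdot\nabla(\eta^2\phi_\e^2)\,dx .
\]
The right-hand side is $\le C\norm{\nabla u}_{L^p(B_1)}^p+o(1)$, the terms containing $\nabla\phi_\e$ being handled as in Section~\ref{sec_stability} through \eqref{cutoff_2}, the integrability $\abs{\nabla u}^{p-1}\lvert D^2u\rvert\in L^1_{\mathrm{loc}}$ (from \eqref{firstDS} and Cauchy--Schwarz if $p\ge 2$, from \cite{T} if $p\in(1,2)$), and dominated convergence. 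Letting $\e\downarrow 0$ gives $\int_{B_{3/4}}f(u)\abs{\nabla u}\,dx\le C\norm{\nabla u}_{L^p(B_1)}^p$ and hence the claimed bound on $\norm{\Delta_{p+1}u}_{L^1(B_{3/4})}$.

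\emph{Step 2.} Set $g:=-\Delta_{p+1}u$; by Step~1, $g\in L^1(B_{3/4})$ with $\norm{g}_{L^1(B_{3/4})}\le C\norm{\nabla u}_{L^p(B_1)}^p$, and since $u\in C^{1,\vartheta}_{\mathrm{loc}}(B_1)$ we have $u\in W^{1,p+1}_{\mathrm{loc}}(B_1)$, so $u$ is a local weak solution of $-\Delta_{p+1}u=g$ with $L^1$ datum. I would then invoke the standard local a priori estimate of Boccardo--Gallou\"et for the $(p+1)$-Laplacian with $L^1$ right-hand side: testing with truncations $T_k(u-\bar u)\,\eta^{p+1}$ and iterating in $k$ (here $\bar u$ is the mean of $u$ over $B_{3/4}$, which affects neither $\nabla u$ nor $g$), one obtains, when $p+1\le n$,
\[
\norm{\nabla u}_{M^{np/(n-1)}(B_{1/2})}\le C\Big(\norm{g}_{L^1(B_{3/4})}^{1/p}+\norm{\nabla u}_{L^p(B_{3/4})}\Big)\le C\norm{\nabla u}_{L^p(B_1)} ,
\]
and the even better bound $\norm{\nabla u}_{L^{p+1}(B_{1/2})}\le C\norm{\nabla u}_{L^p(B_1)}$ when $p+1>n$. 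Since $np/(n-1)>p$, the inclusion $M^{np/(n-1)}(B_{1/2})\subset L^{p+\gamma}(B_{1/2})$ holds for every $\gamma\in(0,p/(n-1))$, and \eqref{19} follows with, e.g., $\gamma=p/(2(n-1))$ (or $\gamma=1/2$ in the case $p+1>n$), a constant depending only on $n$ and $p$.

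The main obstacle is Step~1: the test-function computation leading to the self-improving identity must be carried out with the cut-off $\phi_\e$ and all the $\e\downarrow 0$ limits justified exactly as in Section~\ref{sec_stability}, which is delicate near the degenerate set $\{\nabla u=0\}$. Moreover one must track that every bound used — in particular $\int_{B_{3/4}}f(u)\abs{\nabla u}\,dx\le C\norm{\nabla u}_{L^p(B_1)}^p$ and the trace estimate coming from Theorem~\ref{thm_SZ_stability} — has the correct homogeneity (power $p$), since this is precisely what makes the Boccardo--Gallou\"et estimate of Step~2 produce the \emph{linear} bound \eqref{19} rather than a nonhomogeneous one.
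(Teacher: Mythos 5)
Your Step~1 is essentially the paper's Step~1: the same decomposition $\Delta_{p+1}u=|\nabla u|\,\Delta_p u+|\nabla u|^{p-1}D^2u[\nu,\nu]$, the same control of the tangential Hessian via \eqref{trace_ineq}, \eqref{84}, and Cauchy--Schwarz, and the same estimate for $\int f(u)|\nabla u|\eta^2\,dx$ --- the paper reaches your ``self-improving identity'' by integrating \eqref{identity_div} against $\eta^2$ and integrating by parts once, which is equivalent to your test-function computation with $\xi=|\nabla u|\eta^2\phi_\e^2$, and the $\phi_\e$-limit procedure you invoke is exactly the one carried out in Section~\ref{sec_stability}. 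Your Step~2, however, is a genuinely different route. The paper does not pass through the $L^1$ theory for the $(p+1)$-Laplacian: from the $L^1$ bound on $\Delta_{p+1}u$ it derives $\int_{\{u=t\}\cap B_{1/2}}|\nabla u|^p\,d\mathcal{H}^{n-1}\le C$ for a.e.\ $t$, and then combines this, via the coarea formula and a one-dimensional H\"older inequality in the variable $t$, with a Sobolev--Poincar\'e estimate for $u$. Your alternative, reading $-\Delta_{p+1}u=g$ with $g\in L^1$ and invoking local Boccardo--Gallou\"et truncation estimates, is valid and arguably more modular, since once the $L^1$ bound is in hand the remaining step is a documented black box; it directly yields the natural Marcinkiewicz exponent $\frac{np}{n-1}>p$, and (as you note) the correct homogeneity $\|g\|_{L^1}^{1/p}\sim\|\nabla u\|_{L^p}$ is what makes the resulting bound linear. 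The one point to repair is the regime $p+1>n$: there the exponent $\frac{np}{n-1}$ is not the right Boccardo--Gallou\"et statement, and rather than improvising an $L^{p+1}$ bound you should reduce to the case $n>p+1$ by adding artificial variables via Remark~\ref{artificial}, exactly as the paper does when $n\le3$ in the related argument of Appendix~\ref{appendix_alt_proof}.
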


\begin{proof}
	Without loss of generality, we can assume that 
	\[\norm{\nabla u}_{L^p(B_1)}=1,\] 
	which will be useful later, in Step 3 of the proof. Let $ \eta\in C^\infty_c(B_1)$ be a nonnegative function such that $ \eta\equiv1 $ in $ B_{1/2} $. We divide the proof of~\eqref{19} in three steps. 
	
	\medskip
	
	\textbf{Step 1.} We claim that we can control the $L^1$-norm of the $(p+1)$-Laplacian of $u$ in terms of a constant depending only on $n$ and $p$ (after multiplying by the cut-off function $\eta^2$), i.e.,
	\begin{equation}\label{div_bound}
	\int_{B_{1}}\abs{\textnormal{div}\left(\abs{\nabla u}^{p-1}\nabla u\right)}\eta^2\,dx\leq C.
	\end{equation}
	
	For this, we first note that $\lvert\textnormal{div}\left(\abs{\nabla u}^{p-1}\nabla u\right)\rvert\eta^2$ is integrable in $B_1$. 
	This follows from
	\begin{equation*}
	\begin{split}
	\int_{B_1}\abs{\nabla u}^{p-1}\lvert D^2u\rvert\eta^2\,dx&=\int_{B_1}\abs{\nabla u}^{\frac{p-2}{2}}\lvert D^2u\rvert\abs{\nabla u}^{\frac{p}{2}}\eta^2\,dx
	\\
	&\leq C\left(\int_{B_1}\abs{\nabla u}^{p-2}\lvert D^2u\rvert^2\eta^2\,dx\right)^{\frac12}\left(\int_{B_1}\abs{\nabla u}^p\,dx\right)^\frac12,
	\end{split}
	\end{equation*}
	and the fact that $ \nabla u\in W^{1,2}_{\sigma,\text{loc}}(B_1) $, where the $W^{1,2}_{\sigma}$-norm is defined in \eqref{weighted_norm} --- see Remark~\ref{rmk_integrability}.
	In this way, the left-hand side of \eqref{div_bound} is well-defined and we will be able to integrate by parts later in \eqref{101}.
	
	Next, we use the identities
	\begin{equation}\label{identity_div}
	\textnormal{div}\left(\abs{\nabla u}^{p-1}\nabla u\right)=\abs{\nabla u}\plaplacian u+\abs{\nabla u}^{p-1}D^2u[\nu,\nu]
	\end{equation}
	and, from \eqref{99}, 
	\begin{equation}\label{90}
	(p-1)\abs{\nabla u}^{p-1}D^2u[\nu,\nu]=\abs{\nabla u}\plaplacian u-\abs{\nabla u}^{p-1}\textnormal{tr}\left(D^2u_{|\nu^\bot\otimes\nu^\bot}\right).
	\end{equation}
	Observe that we can combine them to obtain
	\begin{equation}\label{identity_div_new}
	(p-1)\textnormal{div}\left(\abs{\nabla u}^{p-1}\nabla u\right)=p\abs{\nabla u}\plaplacian u-\abs{\nabla u}^{p-1}\textnormal{tr}\left(D^2u_{|\nu^\bot\otimes\nu^\bot}\right).
	\end{equation}	
	After multiplying identity \eqref{90} by $\eta^2$ and integrating it over $B_1$, we use that $\plaplacian u\leq0 $ and
	$\lvert\textnormal{tr}\left(D^2u_{|\nu^\bot\otimes\nu^\bot}\right)\rvert\leq C\lvert D^2u-D^2u[\nu,\nu]\nu\otimes\nu\rvert$ to deduce
	\begin{equation*}
	(p-1)\int_{B_{1}}\abs{\nabla u}^{p-1}D^2u[\nu,\nu]\eta^2\,dx
	\leq C\int_{B_{1}}\abs{\nabla u}^{p-1}\lvert D^2u-D^2u[\nu,\nu]\nu\otimes\nu\rvert\eta^2\,dx.
	\end{equation*}
	This, together with \eqref{84} and Hölder's inequality, gives
	\begin{equation}\label{up_p+gamma}
	\int_{B_{1}}\abs{\nabla u}^{p-1}D^2u[\nu,\nu]\eta^2\,dx
	\leq 
	C\int_{B_{1}}\abs{\nabla u}^{p-1}\lvert D^2u-D^2u[\nu,\nu]\nu\otimes\nu\rvert\eta^2\,dx
	\leq C.
	\end{equation}
	Now, multiplying identity \eqref{identity_div} by $ \eta^2 $ and integrating it over $ B_1 $ we get
	\begin{equation}\label{101}
	\begin{split}
	-\int_{B_1}\abs{\nabla u}\plaplacian u\,\eta^2\,dx&=\int_{B_1}\abs{\nabla u}^{p-1}D^2u[\nu,\nu]\eta^2\,dx-\int_{B_{1}}\,\textnormal{div}\left(\abs{\nabla u}^{p-1}\nabla u\right)\eta^2\,dx
	\\
	&\leq
	C+\int_{B_{1}}\abs{\nabla u}^{p-1}\nabla u\cdot\nabla \eta^2\,dx\leq C,
	\end{split}
	\end{equation}
	where we used \eqref{up_p+gamma} to control the first term in the right-hand side of the identity and we integrated by parts the second one.
	
	Finally, taking absolute values in \eqref{identity_div_new}, using $\lvert\textnormal{tr}\left(D^2u_{|\nu^\bot\otimes\nu^\bot}\right)\rvert\leq C\lvert D^2u-D^2u[\nu,\nu]\nu\otimes\nu\rvert$ and $-\plaplacian u\geq0 $, we obtain 
	\begin{equation*}
	\begin{split}
	&(p-1)\int_{B_{1}}\abs{\textnormal{div}\left(\abs{\nabla u}^{p-1}\nabla u\right)}\eta^2\,dx
	\\
	&\hspace{2cm}\leq
	-p\int_{B_1}\abs{\nabla u}\plaplacian u\,\eta^2\,dx + C\int_{B_{1}}\abs{\nabla u}^{p-1}\lvert D^2u-D^2u[\nu,\nu]\nu\otimes\nu\rvert\eta^2\,dx.
	\end{split}
	\end{equation*}
	This, together with \eqref{up_p+gamma} and \eqref{101}, proves \eqref{div_bound}.
	
	\medskip
	
	\textbf{Step 2.} We show that
	\begin{equation}\label{89}
	\int_{\{u=t\}\cap B_{1/2}}\abs{\nabla u}^{p}\,d\mathcal{H}^{n-1}\leq C \qquad \text{for a.e.}\,\,t\in\R,
	\end{equation}
	where $C$ is a constant depending only on $n$ and $p$. For this purpose, we claim that
	\begin{equation}\label{4}
	\begin{split}
	\int_{\{u=t\}\cap B_{1/2}}\abs{\nabla u}^{p}\,d\mathcal{H}^{n-1}&\leq
	\int_{\{u=t\}\cap B_{1}}\abs{\nabla u}^{p}\eta^2\,d\mathcal{H}^{n-1}
	\\
	&=
	-\int_{\{u>t\}\cap B_{1}}\textnormal{div}\left(\abs{\nabla u}^{p-1}\nabla u\,\eta^2\right)\,d\mathcal{H}^{n-1}.
	\end{split}
	\end{equation}
	This, together with \eqref{div_bound}, will give \eqref{89}. Hence, we are left with proving the equality in~\eqref{4}.
	
	To show \eqref{4}, note that we cannot use Sard's theorem (since the function $u$ is only $C^{1,\vartheta}$) to ensure the regularity of the level sets in~\eqref{4}. For this reason, we must be careful in integrating by parts in the equality of \eqref{4}.
	To do it properly, we take a smooth approximation $s\longmapsto K_\varepsilon(s)$ of the characteristic function of~$\R_+$ in such a way that $ K'_\varepsilon(s) \rightharpoonup^* \delta_0 $ as $ \varepsilon\to0 $. Recall that in Step 1 we have proved that $\lvert\textnormal{div}\left(\abs{\nabla u}^{p-1}\nabla u\right)\rvert\eta^2\in L^1(B_1)$.
	Hence, for every $ t\in\R $,
	\begin{equation*}
	\begin{split}
	-\int_{B_{1}}K_\varepsilon(u-t)\,\textnormal{div}\left(\abs{\nabla u}^{p-1}\nabla u\,\eta^2\right)\,dx&=
	\int_{B_{1}}K'_\varepsilon(u-t)\abs{\nabla u}^{p+1}\eta^2\,dx
	\\
	&\hspace{-2cm}=\int_\R K'_\varepsilon(\tau-t)\left(\int_{\{u=\tau\}\cap B_{1}}\abs{\nabla u}^{p}\eta^2\,d\mathcal{\mathcal{H}}^{n-1}\right)\,d\tau,
	\end{split}
	\end{equation*}
	where, in the last step, we applied the coarea formula --- see \cite[Lemma A.2]{CFRS} for a precise statement.
	Now, since $u\in C^{1,\vartheta}(B_1)$ we have that $\int_{B_{1}}\abs{\nabla u}^{p+1}\eta^2\,dx<+\infty$ and hence the function $\tau\mapsto\int_{\{u=t \}\cap B_{1}}\abs{\nabla u}^p\eta^2\,d\mathcal{H}^{n-1}$ belongs to $L^1(\R)$. Thus, since a.e. $t\in\R$ is a Lebesgue point for this $L^1(\R)$ function, for such values of $t$ we let $\e\downarrow0$ and conclude the equality in \eqref{4}.
	
	\medskip
	
	\textbf{Step 3.}
	Here we conclude the proof of the lemma. By the Sobolev-Poincaré inequality, there exists an exponent $ m>p $, depending only on~$n$ and~$p$, such that
	\begin{equation}\label{m_ineq}
	\left(\int_{B_1}\abs{u-\overline{u}}^m\, dx\right)^\frac1m
	\leq C\left(\int_{B_{1}}\abs{\nabla u}^p\, dx\right)^\frac1p = C,
	\end{equation}
	where $\overline{u}=\ave_{B_1}u\,dx.$
	Thus, using the coarea formula, we obtain
	\begin{equation}\label{55}
	\int_\R dt\int_{\{u=t\}\cap B_1\cap\{\abs{\nabla u}>0\}}\abs{t-\overline{u}}^m\abs{\nabla u}^{-1}dx
	=\int_{B_1\cap\{\abs{\nabla u}>0\}}\abs{u-\overline{u}}^m\,dx
	\leq C.
	\end{equation}
	
	Since $ m>p $, we can choose $ q>1 $ and $\theta\in\big(0,\frac{1}{p+1}\big)$ such that
	\[
	\frac{m}{q}=\frac{1-\theta}{\theta},
	\]
	and thus $m\theta-q(1-\theta)=0$. Defining $ h(t):=\max\{1,\abs{t-\overline{u}}\}$, and using the coarea formula and Hölder's inequality, we get
	\begin{equation*}
	\begin{split}
	\int_{B_{1/2}}\abs{\nabla u}^{(p+1)(1-\theta)}\,dx&=\int_\R\, dt\int_{\{u=t\}\cap B_{1/2}\cap\{\abs{\nabla u}>0\}} h(t)^{m\theta-q(1-\theta)}\abs{\nabla u}^{p(1-\theta)-\theta}\,d\mathcal{\mathcal{H}}^{n-1}
	\\
	&\leq
	\left(\int_\R dt\int_{\{u=t\}\cap B_{1}\cap\{\abs{\nabla u}>0\}} h(t)^{m}\abs{\nabla u}^{-1}\,d\mathcal{\mathcal{H}}^{n-1}\right)^\theta
	\\
	&\hspace{1.5cm}\cdot
	\left(\int_\R dt\int_{\{u=t\}\cap B_{1/2}\cap\{\abs{\nabla u}>0\}} h(t)^{-q}\abs{\nabla u}^{p}\,d\mathcal{\mathcal{H}}^{n-1}\right)^{1-\theta}.
	\end{split}
	\end{equation*}
	We observe that $\int_\R h(t)^{-q}dt<+\infty$ since $q>1$, and hence by \eqref{89} we have that
	\[
	\int_\R dt\,h(t)^{-q}\int_{\{u=t\}\cap B_{1/2}}\abs{\nabla u}^{p}\,d\mathcal{\mathcal{H}}^{n-1}\leq C \int_\R h(t)^{-q}\,dt \leq C.
	\]
	On the other hand, using the definition of $h(t)$ and \eqref{55}, we have
	\begin{equation*}
	\begin{split}
	&\int_\R dt\int_{\{u=t\}\cap B_{1}\cap\{\abs{\nabla u}>0\}} h(t)^{m}\abs{\nabla u}^{-1}\,d\mathcal{\mathcal{H}}^{n-1}
	\\
	&\hspace{2cm}\leq
	\int_{\overline{u}-1}^{\overline{u}+1} dt\int_{\{u=t\}\cap B_{1}\cap\{\abs{\nabla u}>0\}}\abs{\nabla u}^{-1}\,d\mathcal{\mathcal{H}}^{n-1}+C
	\leq
	\abs{B_1}+C.
	\end{split}
	\end{equation*}	
	Therefore, we have established that
	\[
	\int_{B_{1/2}}\abs{\nabla u}^{(p+1)(1-\theta)}\,dx\leq C.
	\]
	Since $ \theta\in\big(0,\frac{1}{p+1}\big) $, we have that $(p+1)(1-\theta)>p$. This proves \eqref{19} for some $\gamma>0$ depending only on $n$ and $p$. 
\end{proof}

\begin{remark}\label{rmk_gamma}
	We report here the explicit values of $\gamma$ allowed in Lemma \ref{lemma_Lpgamma}, as given by the previous proof and by the alternative proof in Appendix \ref{appendix_alt_proof}. In the previous proof of Lemma~\ref{lemma_Lpgamma}, for $n>p$ we have $m=\frac{np}{n-p}$ in \eqref{m_ineq}, while any finite $m$ is allowed when $n\leq p$. Letting $q\downarrow1$ we deduce that every $\gamma$ satisfying
	\begin{equation*}
	\begin{cases}
	\gamma<\frac{p^2}{np+n-p} \qquad &\text{when}\,\,n>p,
	\\
	\gamma<1 \qquad &\text{when}\,\,n\leq p,
	\end{cases}
	\end{equation*}
	is allowed.
	
	Instead, the alternative proof of Lemma \ref{lemma_Lpgamma} presented in Appendix \ref{appendix_alt_proof}, which is based on the Michael-Simon and Allard inequality, gives 
	\begin{equation*}
	\begin{cases}
	\gamma=\frac{2(p-1)}{n-1}\qquad&\text{when}\,\,n\geq4,
	\vspace{2mm} \\ 
	\gamma=\frac{2(p-1)}{3}\qquad&\text{when}\,\,n\leq3.
	\end{cases}
	\end{equation*}	
	
	Note that, depending on the values of $n$ and $p$, the highest value of $\gamma$ can be given either by the first proof or by the second one\footnote{Take for instance $n=4$ and $p=2$, and then $n=3$ and $p=3/2$.}.
	However, we recall that, for the scope of the present paper, we only need to know that inequality \eqref{19} --- and hence later also~\eqref{L1controls_p} --- hold for some $\gamma>0$ depending only on $n$ and $p$.
\end{remark}

\section{The weighted radial derivative controls the full gradient}\label{sec_doubling}
In this section we prove a lemma that gives, under a doubling assumption on $\abs{\nabla u}^pdx$, a control of the $ L^p $-norm of~$ \nabla u $ by the weighted $L^2$-norm of the radial derivative of $u$. It is stated next. We will use it as a key tool in the proof of Theorem \ref{thm_interior}, combining it with the key estimate \eqref{stability_estimate} for stable solutions that we proved in Section \ref{sec_stability}.
\begin{lemma}\label{lemma_doubling}
	Let $u$ be a stable regular solution of $-\Delta_p u=f(u)$ in $ B_2\subset\R^n $, with $f\in C^1(\R)$ nonnegative. Assume that
	\[
	\int_{B_{1}}\abs{\nabla u}^p\,dx\geq\delta\int_{B_2}\abs{\nabla u}^p\,dx,
	\]
	for some $ \delta>0 $. Then,
	\[
	\int_{B_{3/2}}\abs{\nabla u}^p\,dx\leq C_\delta\int_{B_{3/2}\setminus B_{1}}\abs{\nabla u}^{p-2}u_r^2\,dx,
	\]
	where $C_\delta$ depends only on $n$, $p$, and $\delta$.
\end{lemma}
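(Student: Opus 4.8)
The plan is to argue by contradiction through a compactness argument, following the strategy of \cite{CFRS} for $p=2$ but replacing the linear tools used there (convolution with the Newtonian potential) by the new quantitative bounds of Lemma~\ref{lemma_II_der} and Lemma~\ref{lemma_Lpgamma}. Suppose the conclusion fails. Then there are stable regular solutions $u_k$ of $-\Delta_p u_k=f_k(u_k)$ in $B_2$, with $f_k\in C^1(\R)$ nonnegative, satisfying the doubling hypothesis with a fixed $\delta>0$ but with $\int_{B_{3/2}}\abs{\nabla u_k}^p\,dx>k\int_{B_{3/2}\setminus B_1}\abs{\nabla u_k}^{p-2}(u_k)_r^2\,dx$. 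Normalizing $\int_{B_{3/2}}\abs{\nabla u_k}^p\,dx=1$ and adding a constant to $u_k$ (which leaves the equation, the sign of $f_k$, and the stability inequality unchanged) so that $\ave_{B_{3/2}}u_k\,dx=0$, the doubling assumption yields $\int_{B_1}\abs{\nabla u_k}^p\,dx\geq\delta$ and $\int_{B_2}\abs{\nabla u_k}^p\,dx\leq\delta^{-1}$, while $\int_{B_{3/2}\setminus B_1}\abs{\nabla u_k}^{p-2}(u_k)_r^2\,dx\to0$; moreover, by Lemma~\ref{positive_plap} we also have $\int_{B_{3/2}}\abs{\Delta_p u_k}\,dx=\int_{B_{3/2}}f_k(u_k)\,dx\leq C_\delta$. (If $n\leq p$ we first add artificial variables as in Remark~\ref{artificial}, which affects nothing below.)

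The main obstacle is the compactness step, which is exactly where the new estimates are needed. Applying Lemma~\ref{lemma_II_der} and Lemma~\ref{lemma_Lpgamma} on a finite cover of $\overline{B}_{7/4}$ and using the $L^p(B_2)$ bound, I get $\norm{\nabla u_k}_{L^{p+\gamma}(B_{7/4})}\leq C_\delta$ together with $\int_{B_{7/4}}\abs{\nabla u_k}^{p-2}\abs{D^2u_k}\,dx\leq C_\delta$ when $p\geq2$, respectively $\int_{B_{7/4}}\abs{D^2u_k}\,dx\leq C_\delta$ when $p\in(1,2)$. When $p\geq2$, the pointwise inequality $\abs{\nabla(\abs{\nabla u_k}^{p-2}\nabla u_k)}\leq C(n,p)\abs{\nabla u_k}^{p-2}\abs{D^2u_k}$ shows that $\abs{\nabla u_k}^{p-2}\nabla u_k$ is bounded in $W^{1,1}(B_{7/4})$, so up to a subsequence it converges in $L^1(B_{3/2})$ and a.e.; since $y\mapsto\abs{y}^{p-2}y$ is a homeomorphism of $\R^n$, this forces $\nabla u_k\to\nabla u$ a.e. When $p\in(1,2)$ the Hessian bound gives directly that $\nabla u_k$ is bounded in $W^{1,1}(B_{7/4})$, hence again $\nabla u_k\to\nabla u$ a.e.\ in $B_{3/2}$. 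In both cases, combining the a.e.\ convergence with the uniform $L^{p+\gamma}$ bound (Vitali) upgrades it to $\nabla u_k\to\nabla u$ strongly in $L^p(B_{3/2})$, with $u\in W^{1,p+\gamma}(B_{3/2})$, and likewise $\abs{\nabla u_k}^{p-2}\nabla u_k\to\abs{\nabla u}^{p-2}\nabla u$ in $L^1(B_{3/2})$.

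With compactness in hand I would read off the rigidity of the limit. Passing to the limit in $\int_{B_{3/2}\setminus B_1}\abs{\nabla u_k}^{p-2}(u_k)_r^2\,dx\to0$ (the integrand is dominated by $\abs{\nabla u_k}^p$, which converges in $L^1$) gives $u_r=0$ a.e.\ in the annulus $A:=B_{3/2}\setminus B_1$, so $u=u_0(x/\abs{x})$ in $A$ for some function $u_0$ on $S^{n-1}$. From $\Delta_p u_k=-f_k(u_k)\rightharpoonup-\mu$ in $\mathcal{D}'(B_{3/2})$, where $\mu\geq0$ is a subsequential weak-$*$ limit of the mass-bounded measures $f_k(u_k)\,dx$, together with the $L^1$ convergence of $\abs{\nabla u_k}^{p-2}\nabla u_k$, I obtain $-\Delta_p u=\mu\geq0$ in $B_{3/2}$. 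Testing against radial $\phi=\chi(\abs{x})$ with $\chi\in C_c^\infty((1,3/2))$, $\chi\geq0$, and using that the radial component of $\abs{\nabla u_k}^{p-2}\nabla u_k$ converges in $L^1(A)$ to $\abs{\nabla u}^{p-2}u_r=0$, one gets $\int\phi\,d\mu=0$, hence $\mu\equiv0$ on $A$. Thus $u$ is $p$-harmonic in $A$; writing the $p$-Laplace equation for the $0$-homogeneous function $u=u_0(x/\abs{x})$ in polar coordinates shows that $u_0$ is $p$-harmonic on $S^{n-1}$, and testing with $u_0$ itself forces $\nabla_{S^{n-1}}u_0\equiv0$, i.e.\ $u_0\equiv c$ is constant and $\nabla u\equiv0$ in $A$.

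To close the contradiction: since $u\equiv c$ on $A$, in particular $u=c$ near $\partial B_{5/4}$, and $u$ is a (weak) $p$-supersolution in $B_{5/4}$; by the comparison principle, $u\geq c$ in $B_{5/4}$. Then $u-c\geq0$ is a $p$-supersolution in $B_{5/4}$ that vanishes on the nonempty open set $\{1<\abs{x}<5/4\}$, so the weak Harnack inequality for nonnegative $p$-supersolutions, together with the connectedness of $B_{5/4}$, forces $u\equiv c$ in $B_{5/4}$, whence $\nabla u\equiv0$ in $B_1$. But strong $L^p$ convergence gives $\int_{B_1}\abs{\nabla u}^p\,dx=\lim_k\int_{B_1}\abs{\nabla u_k}^p\,dx\geq\delta>0$, a contradiction. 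As anticipated, I expect the delicate point to be the compactness step — extracting a.e.\ and strong convergence of the gradients without the linearity exploited in \cite{CFRS}, via the $W^{1,1}$ bound on $\abs{\nabla u_k}^{p-2}\nabla u_k$ (for $p\geq2$) or on $\nabla u_k$ (for $p\in(1,2)$) provided by the new Hessian estimates of Lemma~\ref{lemma_II_der}.
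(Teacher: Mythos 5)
Your proposal is correct and follows the same overall contradiction/compactness strategy as the paper, but it handles the rigidity of the limit $v_\infty$ via a genuinely different sub-argument. For compactness you argue directly with a $W^{1,1}$ bound on $\abs{\nabla u_k}^{p-2}\nabla u_k$ (for $p\geq2$) or on $\nabla u_k$ (for $p<2$), extract a.e.\ convergence by inverting the homeomorphism $y\mapsto\abs{y}^{p-2}y$, and upgrade with the $L^{p+\gamma}$ bound; the paper instead packages the $p\geq2$ case in the auxiliary Rellich-type Lemma~\ref{lemma_convergence} which works component by component via $\widetilde w_k=\abs{w_k}^{p-2}w_k$ --- the two arguments are essentially equivalent. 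The genuine divergence is afterwards: you pass to the limit in the equation $-\Delta_p u_k=f_k(u_k)$ (using $L^1$ convergence of $\abs{\nabla u_k}^{p-2}\nabla u_k$ and weak-$*$ convergence of the nonnegative mass-bounded measures $f_k(u_k)\,dx$), then kill the limit measure on the annulus by testing with radial functions and using $\abs{\nabla v_\infty}^{p-2}\partial_r v_\infty\equiv 0$, thereby showing $v_\infty$ is $p$-harmonic in the annulus, and finally reduce to the Liouville statement that a $W^{1,p}$ $p$-harmonic function on $S^{n-1}$ is constant. The paper never shows $v_\infty$ is $p$-harmonic in the annulus; it only uses $p$-\emph{super}harmonicity, constructing a $p$-harmonic comparison function $v$ to obtain a lower bound $c_0$, and then exploits the $0$-homogeneity together with the weak Harnack inequality near a point $x_0\in\partial B_{10/8}$ to force $v_\infty\equiv c_0$ first on a ring and then in the whole annulus. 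Your route is arguably more explicit (the sphere Liouville step is clean), at the cost of an extra limiting argument in the equation and of making precise that $v_\infty$ restricted to $S^{n-1}$ lies in $W^{1,p}(S^{n-1})$ and that $\Delta_p$ factorizes through $\Delta_{p,S^{n-1}}$ on $0$-homogeneous functions; the paper's route stays within the $p$-superharmonic framework and never needs to identify the limit of $f_k(u_k)$. A minor imprecision worth noting: where you claim $\int_A\abs{\nabla u_k}^{p-2}(\partial_r u_k)^2\to\int_A\abs{\nabla u}^{p-2}(\partial_r u)^2$ because ``the integrand is dominated by $\abs{\nabla u_k}^p$'', you really need a.e.\ convergence plus the uniform $L^{(p+\gamma)/p}$ integrability of $\abs{\nabla u_k}^p$ and Vitali's theorem (which you do have); the paper instead uses the quantitative pointwise inequality $\abs{\,\abs{a}^{p-2}(a\cdot\sigma)^2-\abs{b}^{p-2}(b\cdot\sigma)^2\,}\le C(\abs{a}+\abs{b})^{p-1}\abs{a-b}$ together with H\"older and strong $L^p$ convergence, which sidesteps Vitali.
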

\begin{proof}
	Assume the result to be false. Then, there exist $\delta>0$ and a sequence of stable regular solutions $ (u_k)_k $ to $ -\Delta_p u_k=f_k(u_k) $ in $B_2$, for a sequence of nonnegative $C^1$ nonlinearities $f_k$, such that
	\begin{equation}\label{doubling_assumption}
	\int_{B_1}\abs{\nabla u_k}^p\,dx\geq\delta\int_{B_2}\abs{\nabla u_k}^p\,dx,
	\end{equation}
\begin{equation}\label{57}
	\int_{B_{3/2}}\abs{\nabla u_k}^p\,dx=1,
\end{equation}
and
	\begin{equation}\label{radial_der_to0}
	\lim_{k\to\infty}\int_{B_{3/2}\setminus B_1}\abs{\nabla u_k}^{p-2}(\partial_r u_k)^2\,dx=0.
	\end{equation}	
	As a consequence of \eqref{doubling_assumption} and \eqref{57}, we have
	\begin{equation}\label{lp}
	\int_{B_2}\abs{\nabla u_k}^p\,dx
	\leq\frac1\delta\int_{B_1}\abs{\nabla u_k}^p\,dx
	\leq \frac1\delta\int_{B_{3/2}}\abs{\nabla u_k}^p\,dx=\frac1\delta\leq C,
	\end{equation}
	and, using Lemma \ref{lemma_Lpgamma} (properly rescaled) together with a covering argument, we deduce
	\begin{equation}\label{uk_p+gamma}
	\int_{B_{3/2}}\abs{\nabla u_k}^{p+\gamma}\,dx\leq C
	\end{equation}
	for some $\gamma>0$.
	
	Hence, the sequence of $p$-superharmonic functions
	\[
	v_k:= u_k - \ave_{B_2}u_k\,dx
	\]
	satisfies, thanks to H\"older's and Poincaré inequalities combined with \eqref{lp},
	\begin{equation}\label{vk_bounds1}
	\norm{v_k}_{L^1(B_2)}\leq C\norm{v_k}_{L^p(B_2)}\leq C.
	\end{equation}
	In addition, thanks to \eqref{57}, \eqref{lp}, \eqref{uk_p+gamma}, and \eqref{radial_der_to0}, the sequence $(v_k)_k$ satisfies
	\begin{equation}\label{vk_bounds2}
	\norm{\nabla v_k}_{L^p(B_{3/2})}= 1, \quad
	\norm{\nabla v_k}_{L^p(B_2)}\leq C, \quad
	\norm{\nabla v_k}_{L^{p+\gamma}(B_{3/2})}\leq C,
	\end{equation}
	and
	\begin{equation}\label{vk_bounds3}
	\lim_{k\to\infty}\int_{B_{3/2}\setminus B_1}\abs{\nabla v_k}^{p-2}(\partial_r v_k)^2\,dx=0.
	\end{equation}
	By \eqref{vk_bounds1} and \eqref{vk_bounds2}, the sequence $(v_k)_k$ is bounded in $W^{1,p}(B_{3/2})$, thus it weakly converges in $W^{1,p}(B_{3/2})$ to a limit $v_\infty\in W^{1,p}(B_{3/2})$, up to a subsequence.
	Next, we claim that, up to a subsequence, we have
	\begin{equation}\label{68}
	v_k\longrightarrow v_\infty \qquad \text{strongly in}\,\,W^{1,p}(B_{3/2}).
	\end{equation}
	
	To prove this we have to distinguish whether $p\geq2$ or $p\in(1,2)$.
	
	\medskip
	
	\textit{Case 1.} We start by assuming $p\geq2$.		
	Besides $\norm{\nabla v_k}_{L^p(B_{3/2})}=1$,
	we also have, using Lemma \ref{lemma_II_der} (properly rescaled and combined with a covering argument) and \eqref{vk_bounds2}, that
	\[
	\norm{\abs{\nabla v_k}^{p-2}D^2v_k}_{L^1(B_{3/2})}\leq C\norm{\nabla v_k}^{p-1}_{L^p(B_2)}\leq C,
	\]
	where $C$ does not depend on $k$. Therefore, we can apply Lemma \ref{lemma_convergence} (rescaled from $B_1$ to~$B_{3/2}$) to every component of $\nabla v_k$ and deduce that, up to subsequence, $\nabla v_k$ converges strongly in $L^{p-1}(B_{3/2})$ to $\nabla v_\infty$, where $v_\infty$ was the $W^{1,p}(B_{3/2})$ weak limit of $(v_k)_k$. Combining this with the Poincaré inequality, we deduce that, up to a subsequence,
	\begin{equation}\label{23}
	v_k\longrightarrow v_\infty \qquad \text{strongly in}\,\,W^{1,p-1}(B_{3/2}).
	\end{equation}
	In addition, by \eqref{vk_bounds2} we know that
	\begin{equation*}
	\norm{\nabla v_k}_{L^{p+\gamma}(B_{3/2})}\leq C,
	\end{equation*}
	for some $ \gamma>0 $. Since $p-1<p<p+\gamma$, combining this bound with \eqref{23}, claim \eqref{68} follows for $p\geq2$.
	
	\medskip
	
	\textit{Case 2.}
	When $p\in(1,2)$, from Lemma \ref{lemma_II_der} (properly rescaled and combined with a covering argument) and \eqref{vk_bounds2} it follows that 
	\[
	\int_{B_{3/2}}\lvert D^2v_k\rvert\,dx\leq C\norm{\nabla v_k}_{L^p(B_2)}\leq C.
	\]
	Thus, the sequence $(\nabla v_k)_k$ is uniformly bounded in $ W^{1,1}(B_{3/2})$. From the compactness of the immersion $ W^{1,1}(B_{3/2})\hookrightarrow L^1(B_{3/2}) $, up to a subsequence we have that
	\begin{equation*}
	v_k\longrightarrow v_\infty \qquad \text{strongly in}\,\,W^{1,1}(B_{3/2}).
	\end{equation*}
	We finally combine this convergence with the last bound in \eqref{vk_bounds2} to deduce \eqref{68} for $p\in(1,2)$.
	
	\medskip

	In the rest of the proof we do not need to distinguish whether $p$ is above or below~$2$.
	As a consequence of \eqref{vk_bounds2}, \eqref{vk_bounds3}, and \eqref{68}, we claim that $ v_\infty\in W^{1,p}(B_{3/2}) $, $ v_\infty $ is weakly $ p $-superharmonic,  
	$$
	\norm{\nabla v_\infty}_{L^p(B_{3/2})}=1, \qquad\text{and}\qquad \partial_{r} v_\infty=0 \quad\text{ a.e. in } B_{3/2}\setminus \overline{B}_1.
	$$
	To prove the last statement, we use the inequality 
	$$
	\left| |a|^{p-2}(a\cdot \sigma)^{2}-|b|^{p-2}(b\cdot \sigma)^{2} \right| \leq C(|a|+|b|)^{p-1}|a-b| \qquad\text{for all }a \text{ and }b
	 \text{ in }\R^{n}
	$$
	and all $\sigma\in\R^{n}$ satisfying $|\sigma|=1$, where $C$ is a constant depending only on $n$ and $p$. The inequality is easily proved controling the derivative of the function $c\in\R^{n} \mapsto |c|^{p-2}(c\cdot \sigma)^{2}$ along the segment joining $a$ and $b$ by $C(|a|+|b|)^{p-1}$. From it, we see that
	\begin{equation*}
\begin{split}
	&\int_{B_{3/2}\setminus B_1}\left| \abs{\nabla v_k}^{p-2}(\partial_r v_k)^2 - \abs{\nabla v_\infty}^{p-2}(\partial_r v_\infty)^2 \right| \,dx\\
	& \hspace{1cm} \leq C \int_{B_{3/2}\setminus B_1}\big( \abs{\nabla v_k}+\abs{\nabla v_\infty}\big)^{p-1}  \abs{\nabla v_k -\nabla v_\infty} \,dx,
\end{split}
\end{equation*}
and using H\"older's inequality and that $\nabla v_{k}\to \nabla v_{\infty}$ in $L^p(B_{3/2})$, we get that the above integrals converge to $0$. As a consequence, by \eqref{vk_bounds3}, we conclude that $\partial_{r }v_\infty=0 $ a.e. in $B_{3/2}\setminus \overline{B}_1$.

	Let now $v$ be the $p$-harmonic function in $B_{3/2}\setminus \overline{B}_1$ such that $v-v_\infty\in W^{1,p}_0(B_{3/2}\setminus \overline{B}_1)$. Note that $v$ exists and it is unique --- see, e.g., \cite[Theorem 2.16]{Lind}. 
	By the weak comparison principle for the $p$-Laplacian, we have that
	\[
		v_\infty\geq v \qquad \text{in}\,\, B_{3/2}\setminus \overline{B}_1.
		\]
	Now, since $v\in C^0(\overline{B}_{11/8}\setminus B_{9/8})$, we have that
	\[
	c_0:=\inf_{B_{11/8}\setminus B_{9/8}}v_\infty \geq \min_{\overline{B}_{11/8}\setminus B_{9/8}}v>-\infty.
	\]
	In addition, by the $0$-homogeneity of $v_\infty$ in $B_{3/2}\setminus\overline{B}_1$, we will have
	\[
	c_0 = \inf_{B_{11/8}\setminus B_{9/8}}v_\infty = \inf_{B_{1/8}(x_0)}v_\infty = \inf_{B_{1/16}(x_0)}v_\infty
	\]
	for some point $x_0\in\partial B_{10/8}$. Now, the weak Harnack inequality applied to the $p$-superharmonic function $v_\infty-c_0$, which is nonnegative in $B_{1/8}(x_0)$ --- see, e.g., \cite[Corollary 3.18]{Lind} --- gives that $v_\infty-c_0\equiv0$ in $B_{1/16}(x_0)$, since its infimum in the ball of half radius $B_{1/16}(x_0)$ is zero. 
	As a consequence, we can now repeat the same argument with $x_0$ replaced by any other point on $B_{1/16}(x_0)\cap\partial B_{10/8}$. This, together with a covering argument, leads to $v_\infty\equiv c_0$ in $B_{\frac{10}{8}+\frac{1}{16}}\setminus\overline{B}_{\frac{10}{8}-\frac{1}{16}}$.
	In addition, since $\partial_{r} v_\infty\equiv0$ in $B_{3/2}\setminus\overline{B}_1$, we deduce that
		\begin{equation}\label{claim4}
		v_\infty\equiv c_0\,\,\,\text{a.e. in}\,\,B_{3/2}\setminus \overline{B}_1. 
		\end{equation}
	
	In particular, we have proved that ${v_\infty}_{|\partial B_1}=c_0$. Thus, by the weak comparison principle for $p$-superharmonic functions we get $v_\infty\geq c_0$ in $B_{1}$. Combining this with \eqref{claim4}, and again by the weak Harnack inequality for $p$-superharmonic functions, we obtain that
	\[
	v_\infty\equiv c_0 \qquad \text{a.e. in}\,\,B_{3/2}.
	\]
	This is a contradiction with $\norm{\nabla v_\infty}_{L^p(B_{3/2})}=1$ and finishes the proof of the lemma.
\end{proof}

\section{Interior estimates. Global estimates in convex domains}\label{sec_interior}
We report here the following general lemma from \cite[Lemma 3.2]{CFRS}. We refer to the cited paper for its proof.

\begin{lemma}[Cabré, Figalli, Ros-Oton, Serra \cite{CFRS}]\label{lemma_seq}
	Let $ \{a_j\}_{j\geq0} $ and $ \{b_j\}_{j\geq0} $ be two sequences of nonnegative numbers satisfying $ a_0\leq M $, $ b_0\leq M $,
	\begin{equation*}
	b_j\leq b_{j-1}\qquad \text{and} \qquad a_j+b_j\leq La_{j-1} \qquad \text{for all}\,\,\, j\geq1,
	\end{equation*}
	and
	\begin{equation*}
	\text{if}\quad\, a_j\geq\frac12a_{j-1} \quad\,\text{then}\quad\, b_j\leq L\left(b_{j-1}-b_j\right) \qquad \text{for all}\,\,\, j\geq1,
	\end{equation*}
	for some positive constants $M$ and $L$. Then there exist $ \theta\in(0,1) $ and $ C>0 $, depending only on $ L $, such that
	\[
	b_j\leq CM\theta^j \qquad \text{for all}\,\,\, j\geq0.
	\]
\end{lemma}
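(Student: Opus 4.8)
The plan is a short dichotomy-and-counting argument. First I would assume, with no loss of generality, that $L\ge 1$ (the hypotheses are preserved if $L$ is replaced by $\max\{L,1\}$, using $a_{j-1}\ge0$ and $b_{j-1}-b_j\ge0$), and set $\beta:=\frac{L}{L+1}\in[\tfrac12,1)$. From $a_j+b_j\le La_{j-1}$ and the nonnegativity of $a_j,b_j$ one gets the crude bounds $a_j\le La_{j-1}$ and $b_j\le La_{j-1}$ for every $j\ge1$. Next, call an index $j\ge1$ a \emph{descent} if $a_j<\tfrac12 a_{j-1}$ and a \emph{contraction} otherwise; in the latter case the last hypothesis applies and yields $(L+1)b_j\le Lb_{j-1}$, i.e. $b_j\le\beta b_{j-1}$. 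Summarizing, at every step $j\ge1$ we have $b_j\le b_{j-1}$ and $a_j\le La_{j-1}$, with the improvement $a_j\le\tfrac12 a_{j-1}$ if $j$ is a descent and $b_j\le\beta b_{j-1}$ if $j$ is a contraction.

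Now fix $k\ge0$ and let $T$ be the number of contractions among $\{1,\dots,k\}$, so that $k-T$ is the number of descents. Telescoping the per-step bounds, and using $a_0,b_0\le M$, gives
\[
b_k\le\beta^{T}M \qquad\text{and}\qquad a_k\le\Bigl(\tfrac12\Bigr)^{k-T}L^{T}M ,
\]
whence, by the hypothesis $b_{k+1}\le La_k$,
\[
b_{k+1}\le M\,\min\Bigl\{\beta^{T},\ L^{T+1}\bigl(\tfrac12\bigr)^{k-T}\Bigr\}.
\]
Neither quantity inside the minimum controls $b_{k+1}$ on its own --- the first is useless when there are few contractions, the second when there are few descents --- but a suitable geometric interpolation of the two does. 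Using $\min\{x,y\}\le x^{\lambda}y^{1-\lambda}$, valid for $x,y\ge0$ and $\lambda\in(0,1)$, and choosing
\[
\lambda:=\frac{\log(2L)}{\log(2L)+\log(1/\beta)}\in(0,1),
\]
one computes, by collecting the exponents of $\beta$, $L$ and $2$, that
$b_{k+1}\le M\,L^{1-\lambda}\bigl(\beta^{\lambda}(2L)^{1-\lambda}\bigr)^{T}\bigl(2^{-(1-\lambda)}\bigr)^{k}$; the point of the choice of $\lambda$ is precisely that it makes $\beta^{\lambda}(2L)^{1-\lambda}=1$, so the $T$-dependent factor equals $1$ for every $T\ge0$.

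We are thus left with $b_{k+1}\le M\,L^{1-\lambda}\theta^{k}$ where $\theta:=2^{-(1-\lambda)}\in(0,1)$; re-indexing and setting $C:=L^{1-\lambda}/\theta\ (\ge1$, since $L\ge1$ and $\theta<1$, which also takes care of $j=0$ and $j=1$), we conclude $b_j\le CM\theta^{j}$ for all $j\ge0$, with $\theta$ and $C$ depending only on $L$. The argument is elementary and I do not expect a genuine obstacle; the one point that has to be gotten right is to \emph{interpolate} between the two one-sided estimates with the correct exponent $\lambda=\lambda(L)$, rather than splitting into the cases ``many contractions'' and ``many descents'' treated separately --- such a case split would require controlling how much $a$ can grow over a long run of consecutive contractions, where only the lossy bound $a_j\le La_{j-1}$ is available, and that bound is too weak over $\sim k$ steps.
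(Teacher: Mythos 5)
Your proof is correct. The verification of the per-step bounds is sound: nonnegativity gives $a_j\le La_{j-1}$ and $b_j\le La_{j-1}$ unconditionally; a ``contraction'' step gives $b_j\le\beta b_{j-1}$ with $\beta=L/(L+1)$; a ``descent'' step gives $a_j\le\tfrac12 a_{j-1}$. The telescoped bounds $b_k\le\beta^T M$ and $a_k\le 2^{-(k-T)}L^T M$ follow, and then $b_{k+1}\le\min\{b_k,La_k\}$. The interpolation step is exactly right: with $\lambda=\log(2L)/(\log(2L)+\log(1/\beta))\in(0,1)$ one has $\beta^\lambda(2L)^{1-\lambda}=1$, which kills the $T$-dependence uniformly, yielding $b_{k+1}\le ML^{1-\lambda}\theta^k$ with $\theta=2^{-(1-\lambda)}$. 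The WLOG reduction to $L\ge1$ is legitimate (both hypotheses are monotone in $L$), and $C=L^{1-\lambda}/\theta\ge1$ covers $j=0$.

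One remark on comparison: this paper does not contain a proof of the lemma --- it states it and defers to \cite{CFRS} --- so there is no in-source proof to check yours against. The underlying dichotomy (at each step, either $a$ halves or the stability hypothesis forces $b_j\le\beta b_{j-1}$) is the natural and, as far as I recall, the one used in \cite{CFRS} as well. Where your write-up is particularly clean is in how you close the counting: rather than splitting into ``many contractions'' versus ``many descents'' and having to track the lossy factor $L$ accumulated over runs of contractions, you take a geometric mean of the two one-sided bounds with an $L$-dependent exponent $\lambda$ chosen to make the $T$-dependent factor identically $1$. That removes all case analysis in one stroke and makes the dependence of $\theta$ and $C$ on $L$ explicit. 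This is a valid and slightly slicker bookkeeping than a case split; either way the constants depend only on $L$, as required.
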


With this lemma and a new interpolation inequality adapted to the $p$-Laplacian --- that we establish in Appendix \ref{appendix_lemmata}; see Proposition \ref{prop5.2} --- we are able to prove our main result.

\begin{proof}[Proof of Theorem \ref{thm_interior}]	
	We split the proof into three steps. 
		
\medskip

\textbf{Step 1.} With the aim of proving the higher integrability estimate \eqref{L1controls_p}, we first claim that in every dimension $n$
	\begin{equation}\label{claim1}
	\norm{\nabla u}_{L^p(B_{1/2})}\leq C \norm{u}_{L^1(B_1)},
	\end{equation}	
	where $C$ depends only on $n$ and $p$. In order to prove this inequality, we have to distinguish whether $p$ is above or below $2$. 
	
	\medskip
	
	\textit{Case 1.} We start by assuming $p\geq2$ and we pick $\gamma>0$ given by Lemma \ref{lemma_Lpgamma}. By interpolation and that lemma, we obtain
\begin{equation*}
\begin{split}
\norm{\nabla u}_{L^p(B_{1/2})}
&\leq \norm{\nabla u}_{L^{p+\gamma}(B_{1/2})}^\theta
\norm{\nabla u}_{L^{p-1}(B_{1/2})}^{1-\theta}
\\
&\leq C\norm{\nabla u}_{L^{p}(B_1)}^\theta
\norm{\nabla u}_{L^{p-1}(B_{1/2})}^{1-\theta}
\end{split}
\end{equation*}
for some $\theta\in(0,1)$ depending only on $n$ and $p$.
From this estimate, we use Young's inequality and apply the new interpolation inequality of Proposition \ref{prop5.2} in each cube (of universal size) in a family of cubes covering $B_{1/2}$ and contained in $B_{3/4}$) --- note that the proposition can be applied by the regularity shown in Remark \ref{rmk_integrability}. Adding the inequalities obtained in each cube, we deduce
\begin{equation*}
\begin{split}
\norm{\nabla u}_{L^p(B_{1/2})}^{p-1} &\leq \widetilde{\e}\norm{\nabla u}_{L^{p}(B_1)}^{p-1} +
\frac{C}{\widetilde{\e}^a}\norm{\nabla u}_{L^{p-1}(B_{1/2})}^{p-1}
\\
&\leq\widetilde{\e}\norm{\nabla u}_{L^{p}(B_1)}^{p-1} + \frac{C}{\widetilde{\e}^a}\left(\e \int_{B_{3/4}}\abs{\nabla u}^{p-2}\lvert D^2u\rvert\,dx+ C\e^{1-p}\norm{u}^{p-1}_{L^{p-1}(B_{3/4})}\right),
\end{split}
\end{equation*}
for every $\e>0$ and $\widetilde{\e}>0$, where $a>0$ and $C$ depend only on~$n$ and~$p$.
Now, we  apply \eqref{IIder_p>2} (after a covering and rescaling argument, to replace $B_{1/2}$ by $B_{3/4}$ on its left-hand side) and choose $\e$ such that $C\e/\widetilde{\e}^a=\widetilde{\e}$, to get
\begin{equation*}
\begin{split}
\norm{\nabla u}_{L^p(B_{1/2})}^{p-1} 
&\leq\widetilde{\e}\norm{\nabla u}_{L^{p}(B_1)}^{p-1} + \frac{C}{\widetilde{\e}^a}\left(\e \norm{\nabla u}^{p-1}_{L^p(B_1)}+ C\e^{1-p}\norm{u}^{p-1}_{L^{p-1}(B_{1})}\right)
\\
&\leq 2\widetilde{\e}\norm{\nabla u}^{p-1}_{L^p(B_1)} + C\widetilde{\e}^{-b}\norm{u}^{p-1}_{L^{p-1}(B_{1})},
\end{split}
\end{equation*}
for every $\widetilde{\e}>0$, where $b>0$ depends only on $n$ and $p$. 

We now apply this estimate to the functions $u_{\rho,y}(x):=u(y+\rho x)$ for $B_\rho(y)\subset B_1$; note that all these functions are stable regular solutions to a $p$-Laplace equation with nonnegative nonlinearity.  We obtain that
\begin{align*}
\rho^{\frac{n}{p-1}+p} & \int_{B_{\rho/2}(y)}|\nabla u|^p\,dx
\\
\leq  &
C \tilde\e^{\frac{p}{p-1}} \rho^{\frac{n}{p-1}+p}\int_{B_{\rho}(y)}|\nabla u|^p\,dx
+ \frac{C}{\tilde\e^{\frac{bp}{p-1}}}\left(\int_{B_{\rho}(y)}|u|^{{p-1}}\,dx\right)^{\frac{p}{p-1}}
\\
\leq &
C \tilde\e^{\frac{p}{p-1}} \rho^{\frac{n}{p-1}+p}\int_{B_{\rho}(y)}|\nabla u|^p\,dx
+ \frac{C}{\tilde\e^{\frac{bp}{p-1}}}\left(\int_{B_{1}}|u|^{{p-1}}\,dx\right)^{\frac{p}{p-1}}
\end{align*}
for every $\tilde\e>0$, where the constant $C$ depends only on $n$ and $p$.
By Lemma \ref{lemma_simon} applied with
$\mathcal S (B):=\|\nabla u\|_{L^p(B)}^p$ and $\beta=\frac{n}{p-1}+p$, we deduce that for every $p\geq2$ and dimension $n$ we have
\begin{equation}\label{44}
\norm{\nabla u}_{L^p(B_{1/2})}\leq C \norm{u}_{L^{p-1}(B_{1})}.
\end{equation}

If $p=2$ estimate \eqref{44} coincides with our claim \eqref{claim1}. Therefore, we may assume $p>2$ and apply \eqref{44} to $u-\overline{u}$, where $\overline{u}:=\ave_{B_{1}}u\,dx$. By interpolation, we get
\begin{equation*}
\begin{split}
\norm{\nabla u}_{L^p(B_{1/2})}
&\leq C \norm{u-\overline{u}}_{L^{p-1}(B_{1})}
\\
&\leq C\norm{u-\overline{u}}_{L^p(B_{1})}^\nu \,
\norm{u-\overline{u}}_{L^1(B_{1})}^{1-\nu},
\end{split}
\end{equation*}
for some $\nu\in(0,1)$ depending only on $p$.
Using Young's inequality and the fact that $\norm{u-\overline{u}}_{L^1(B_{1})}\leq2\norm{u}_{L^1(B_{1})}$, we get
\[
\norm{\nabla u}_{L^p(B_{1/2})}
\leq
\e\norm{u-\overline{u}}_{L^p(B_{1})} +
\frac{C}{\e^{\widetilde{a}}}\norm{u}_{L^1(B_{1})}
\]
for some $\widetilde{a}>0$ depending only on $p$.
Finally, Poincaré inequality gives
\[
\norm{\nabla u}_{L^p(B_{1/2})}
\leq
C\e\norm{\nabla u}_{L^p(B_{1})} +
\frac{C}{\e^{\widetilde{a}}}\norm{u}_{L^1(B_{1})},
\]
for every $\e>0$, where $C$ depends only on $n$ and $p$. Applying this estimate to rescaled  and translated versions of $u$, as before, by Lemma \ref{lemma_simon} we conclude \eqref{claim1} for $p>2$. 

\medskip

\textit{Case 2.} Let us assume now $p\in(1,2)$. Similarly to the previous case, by interpolation and Lemma \ref{lemma_Lpgamma}, we obtain
\begin{equation*}
\begin{split}
\norm{\nabla u}_{L^p(B_{1/2})} 
&\leq \norm{\nabla u}_{L^{p+\gamma}(B_{1/2})}^{\tau}
\norm{\nabla u}_{L^{1}(B_{1/2})}^{1-\tau}
\\
&\leq C\norm{\nabla u}_{L^{p}(B_1)}^{\tau}
\norm{\nabla u}_{L^{1}(B_{1/2})}^{1-\tau}
\end{split}
\end{equation*}
for some $\tau\in(0,1)$ and $C$ depending only on $n$ and $p$. Now, by Young's inequality and the interpolation inequality \cite[Theorem 7.28]{GT} we deduce that
\begin{equation*}
\begin{split}
\norm{\nabla u}_{L^p(B_{1/2})} &\leq \widetilde{\e}\norm{\nabla u}_{L^{p}(B_1)} +
\frac{C}{\widetilde{\e}^c}\norm{\nabla u}_{L^{1}(B_{1/2})}
\\
&\leq\widetilde{\e}\norm{\nabla u}_{L^{p}(B_1)} + \frac{C}{\widetilde{\e}^c}\left(\e \int_{B_{1/2}}\lvert D^2u\rvert\,dx+ C\e^{-1}\norm{u}_{L^{1}(B_{1/2})}\right),
\end{split}
\end{equation*}
for every $\e>0$ and $\widetilde{\e}>0$, where $c>0$ and $C$ depend only on~$n$ and~$p$.
Applying~\eqref{IIder_p<2} and choosing $\e$ such that $C\e/\widetilde{\e}^c=\widetilde{\e}$, we get
\begin{equation*}
\begin{split}
\norm{\nabla u}_{L^p(B_{1/2})} 
&\leq\widetilde{\e}\norm{\nabla u}_{L^{p}(B_1)} + \frac{C}{\widetilde{\e}^c}\left(\e \norm{\nabla u}_{L^p(B_1)}+ C\e^{-1}\norm{u}_{L^{1}(B_{1})}\right)
\\
&\leq 2\widetilde{\e}\norm{\nabla u}_{L^p(B_1)} + C\widetilde{\e}^{-d}\norm{u}_{L^{1}(B_{1})},
\end{split}
\end{equation*}
where $d>0$ depends only on $n$ and $p$. From this inequality applied at all scales (as before), by Lemma \ref{lemma_simon} we deduce \eqref{claim1} for every $p\in(1,2)$ and dimension~$n$.

\medskip

At this point, since we proved \eqref{claim1} for every $p>1$, we can combine it with Lemma~\ref{lemma_Lpgamma} to get the bound
\[
\norm{\nabla u}_{L^{p+\gamma}(B_{1/4})}\leq C\norm{u}_{L^1(B_1)}.
\]
Finally, using a rescaling and covering argument we deduce the same estimate in $B_{1/2}$, i.e., \eqref{L1controls_p}.

In Step 1 of Proof of Lemma \ref{lemma_Lpgamma} we proved $\norm{\Delta_{p+1}u}_{L^1(B_{1/2})}\leq C \norm{\nabla u}_{L^p(B_1)}$. This estimate, rescaled and combined with \eqref{claim1}, leads to
\[
\norm{\Delta_{p+1}u}_{L^1(B_{1/4})}\leq C \norm{u}_{L^1(B_1)}.
\]
After a covering, we conclude \eqref{pplus1}.

\medskip
	
\textbf{Step 2.} The rest of the proof is devoted to show the interior $C^\alpha$ apriori estimate \eqref{C.alfa}. For this we may assume $n>p$ (by Remark \ref{artificial}), as well as that $n$ and $p$ satisfy \eqref{dimension}, in order to use the stability estimate~\eqref{stability_estimate} in Lemma~\ref{lemma_stability}. 

We first prove the bound
\begin{equation}\label{seminormCalpha}
\seminorm{u}_{C^\alpha(\overline{B}_{1/16})}\leq C\norm{\nabla u}_{L^p(B_{3/4})}
\end{equation}
for the $C^\alpha$ semi-norm.

For this, let $\rho\in(0,1)$ and define the quantities
\[
\mathcal{D}(\rho):=\rho^{p-n}\int_{B_\rho}\abs{\nabla u}^p\,dx \qquad \text{and} \qquad  \mathcal{R}(\rho):=\int_{B_\rho}r^{p-n}\abs{\nabla u}^{p-2}u_r^2\,dx.
\]
By the stability inequality \eqref{stability_estimate}, for every $ \rho\in(0,2/3) $ it holds that
\begin{equation}\label{s}
\mathcal{R}(\rho)\leq C\rho^{p-n}\int_{B_{3\rho/2}\setminus B_\rho}\abs{\nabla u}^p\,dx.
\end{equation}
On the other hand, if we assume that $\mathcal{D}(\rho)\geq\frac12\mathcal{D}(2\rho)$, a rescaling of Lemma \ref{lemma_doubling} applied to the function $ u(\rho \,\cdot) $ with $ \delta=\frac12 $, gives that
\begin{equation*}
\rho^{p-n}\int_{B_{3\rho/2}}\abs{\nabla u}^p\,dx
\leq C \int_{B_{3\rho/2}\setminus B_\rho}r^{p-n}\abs{\nabla u}^{p-2}u_r^2\,dx
= C\left\{\mathcal{R}(3\rho/2)-\mathcal{R}(\rho)\right\},
\end{equation*}
for some constant $ C $ depending only on $n$ and $p$. Combining this bound with \eqref{s} and using that $ \mathcal{R} $ is nondecreasing, we deduce that for every $ \rho<1/2 $ one has:
\begin{equation}\label{56}
\text{if}\quad\,\mathcal{D}(\rho)\geq\frac12\mathcal{D}(2\rho) \quad\,\text{then}\quad\, \mathcal{R}(\rho)\leq C\left\{\mathcal{R}(2\rho)-\mathcal{R}(\rho)\right\}.
\end{equation}

Hence, defining $ a_j:=\mathcal{D}(2^{-j-2}) $ and $b_j:=\mathcal{R}(2^{-j-2}) $, they satisfy the assumptions of Lemma~\ref{lemma_seq} by \eqref{s}, \eqref{56}, and the fact that $ \mathcal{R}$ is nondecreasing. From this, we deduce that
\[
\mathcal{R}(2^{-j-2})\leq CM\theta^j \qquad \text{for all}\,\,\,j\geq0
\]
for some $\theta\in(0,1)$, depending only on $n$ and $p$, and for $ M:=\abs{a_0}+\abs{b_0}\leq C\norm{\nabla u}_{L^p(B_{1/2})}^p $. Notice that we used again~\eqref{s} to bound $b_0$. Choosing $ \alpha>0 $ such that $ 2^{-p\alpha}=\theta $, this gives
\begin{equation}\label{s1}
\mathcal{R}(\rho)\leq C\rho^{p\alpha}\norm{\nabla u}_{L^p(B_{1/2})}^p \qquad \text{for every}\,\,\,\rho\leq1/4.
\end{equation}

We now apply \eqref{s1} to the function $ u_y(x):=u(x+y) $ for a given $ y\in B_{1/4} $. We use the following notation:
\[r_y(x)=\abs{x-y}\qquad\text{and}\qquad u_{r_y}(x)=\frac{x-y}{\abs{x-y}}\cdot\nabla u(x).\]
Observe that $ B_{1/2}(y)\subset B_{3/4} $. By \eqref{s1} (rescaled in order that $B_{1/2}(y)$ becomes $B_1(y)$) we obtain
\begin{equation}\label{567}
\rho^{p-n}\int_{B_\rho(y)}\abs{\nabla u}^{p-2}u_{r_y}^2\,dx\leq C\rho^{p\alpha}\int_{B_{3/4}}\abs{\nabla u}^p\,dx \qquad \text{for every}\,\,\,\rho\leq1/8.
\end{equation}
Now, given $ z\in B_{1/8} $, we average \eqref{567} with respect to $y\in B_{2\rho}(z)$, with $\rho\leq1/16$. More precisely, we first use the identity
\[
\abs{\nabla u}^2(x)=C_n\rho^{-n}\int_{B_\rho(x)\setminus B_{\rho/2}(x)}u_{r_y}^2(x)\,dy
\]
for $x\in B_\rho(z)\subset B_{1/4}$, where $C_n$ is a constant depending only on $n$. Since $B_\rho(x)\subset B_{1/4}$, this is easily checked as follows: 
\begin{equation*}
\begin{split}
\int_{B_\rho(x)\setminus B_{\rho/2}(x)}  u_{r_y}^2 (x) dy  
& = u_{x_i}(x)u_{x_j}(x) \int_{\rho/2}^{\rho} ds \int_{\partial B_s(x)}  s^{-2}(y_i-x_i)(y_j-x_j)\,d\sigma(y)  \\
& =  u_{x_i}^2(x) \int_{\rho/2}^{\rho} ds \int_{\partial B_s(x)} s^{-2}(y_i-x_i)^2\,d\sigma(y)\\
& = \sum_{i=1}^n  u_{x_i}^2(x) \int_{\rho/2}^{\rho}   \frac{|S^{n-1}|}{n} s^{n-1}\,ds= C_n^{-1} \rho^n |\nabla u(x)|^2.
\end{split}
\end{equation*}
From this identity and \eqref{567}, for every $z\in B_{1/8}$ and $\rho\leq1/16$ we have
\begin{equation*}
	\begin{split}
	\int_{B_{\rho}(z)}\abs{\nabla u}^p(x)\,dx&=C\rho^{-n}\int_{B_{\rho}(z)}dx\abs{\nabla u(x)}^{p-2}\int_{B_{\rho}(x)\setminus B_{\rho/2}(x)}dy\,u_{r_y}^2(x)
	\\
	&\hspace{-1.2cm}\leq C\rho^{-n}\int_{B_{2\rho}(z)}dy\int_{B_{\rho}(y)}dx\, \abs{\nabla u(x)}^{p-2}u_{r_y}^2(x)
	\leq
	C\rho^{n-p+p\alpha}\int_{B_{3/4}}\abs{\nabla u(x)}^p\,dx,
	\end{split}
\end{equation*}
where we have used that $y\in B_{2\rho}(z)\subset B_{1/4}$ as required to have \eqref{567}.
In this way, we obtain
\begin{equation*}
\rho^{p-n}\int_{B_{\rho}(z)}\abs{\nabla u}^p\,dx
\leq C\rho^{p\alpha}\int_{B_{3/4}}\abs{\nabla u}^p \,dx
\qquad \text{for every}\,\,\,\rho\leq1/16.
\end{equation*}
Since $ z\in B_{1/8} $ is arbitrary, by classical estimates in Morrey spaces --- see~\cite[Theorem~7.19]{GT} --- we deduce \eqref{seminormCalpha}.

\medskip

\textbf{Step 3.} Whenever $n$ and $p$ satisfy \eqref{dimension}, we can combine \eqref{seminormCalpha}, \eqref{claim1}, and a rescaling and covering argument, to obtain
\[
	\seminorm{u}_{C^\alpha(\overline{B}_{1/2})}\leq C\norm{u}_{L^1(B_1)}.
\]
This, together with the classical interpolation inequality
\[
\norm{u}_{L^\infty(B_{1/2})}\leq C\left(\seminorm{u}_{C^\alpha(\overline{B}_ {1/2})}+\norm{u}_{L^1(B_{1/2})}\right),
\]
finishes the proof of \eqref{C.alfa}.
\end{proof}

Combining Theorem \ref{thm_interior} with some known boundary estimates in strictly convex domains proved in \cite{CasSa}, we deduce Corollary \ref{cor_convex}.

\begin{proof}[Proof of Corollary \ref{cor_convex}]
	Since $\Omega$ is strictly convex, $f$ is positive, and $u$ is a regular solution of~\eqref{plap_Dir}, Proposition 3.1 in \cite{CasSa} ensures that\footnote{Even though \cite{CasSa} assumes $f$ to be smooth, their proof only uses that $f$ is locally Lipschitz in order to apply a comparison principle from \cite{DS}.
	} 
	\begin{equation}\label{boundary_est}
	\norm{u}_{L^\infty(\Omega\setminus K_{2\delta})}\leq C\norm{u}_{L^1(\Omega)},
	\end{equation}
	where $ K_{\delta}:=\left\{x\in\Omega: \textnormal{dist}(x,\partial\Omega)\geq\delta\right\} $, 
	and $C$ and $\delta$ are positive constants depending only on the domain $\Omega$.
	Using this bound, we can control $f(u)$ in $L^\infty$-norm in the set $\Omega\setminus K_{2\delta}$ by a constant depending only on $\Omega$, $p$, $f$, and $\norm{u}_{L^1(\Omega)}$. By interior and boundary regularity\footnote{See \cite[Theorem 1]{DiB} or \cite[Theorem 1]{T} for interior $ C^{1,\vartheta} $ regularity and \cite[Theorem 1]{L} for boundary regularity.}
	for problem \eqref{plap_Dir}, we deduce that $ \norm{\nabla u}_{L^p(\Omega\setminus K_{\delta})}\leq C$ (note that the closure of $\Omega\setminus K_\delta$ is contained in $\overline{\Omega}\setminus K_{2\delta}$), where $C$ depends only on $ \Omega $, $p$, $ f $, and $ \norm{u}_{L^1(\Omega)} $. On the other hand, by the interior estimate \eqref{L1controls_p} of Theorem \ref{thm_interior} we have that $\norm{\nabla u}_{L^p(K_\delta)}\leq C\norm{u}_{L^1(\Omega)}$ for some constant $C$ depending only on $\Omega$ and $p$ --- since $\delta$ was chosen depending only on $\Omega$. Thus, combining these two bounds we obtain~\eqref{Lp_convex_bound}.
	
	If $n$ and $p$ satisfy \eqref{dimension}, by the $C^\alpha$ interior estimate of Theorem \ref{thm_interior} we have that $\norm{u}_{L^\infty(K_\delta)}\leq C\norm{u}_{L^1(\Omega)}$, where $C$ depends again only on $\Omega$ and $p$. Combining this bound with \eqref{boundary_est}, we obtain~\eqref{convex_bound}.
\end{proof}

From Corollary \ref{cor_convex} and the $L^1$ uniform bounds for $u_\lambda$ proved by the third author in \cite{San,San1}, we deduce Theorem~\ref{thm_extremal_new} about extremal solutions.

\begin{proof}[Proof of Theorem \ref{thm_extremal_new}]
	Let $\{u_\lambda\}_\lambda$ be the family of smallest stable regular solutions to $(1.19)_{\lambda,p}$, as stated in Theorem \ref{thm_extremal_known}. The results in \cite{San,San1} ensure that we have a bound for the $L^1$-norms of $u_\lambda$ which is uniform in $\lambda$. Indeed, when $p\geq2$ this is proved in \cite[Proposition~3]{San1}, while for $p\in(1,2)$ the $L^1$ bound follows from\footnote{Since $p<2$ and $f$ satisfies \eqref{f_p_convexity}, we have that $f$ is convex and thus, in the notation of \cite{San}, it holds that $\tau_-\geq0>(p-2)(p-1)$. It then follows from \cite[Lemma 12]{San} that there exists some $\gamma\geq1/(p-1)$ for which inequality (21) in \cite[Proposition 10]{San} holds. One can therefore use this proposition, obtaining from it the uniform bound for the $L^1$-norms of $u_\lambda$ when $p\in(1,2)$. Note that \cite{San} assumes that $f\in C^2$ in order to define $\tau_-$ and to prove Lemma~12. However, since our nonlinearity $f\in C^1$ is assumed here to be convex, it is easy to see that the proof of \cite[Lemma 12]{San} still works even if $f$ is not $C^2$; indeed, one can even take $\e=0$ within the proof.} 
	\cite[Proposition 10]{San}.
	
	Given that~$\Omega$ is a strictly convex domain, from Corollary \ref{cor_convex} we deduce that
	\[
	\norm{\nabla u_\lambda}_{L^p(\Omega)}\leq C \qquad \text{for every}\,\,\,\lambda\in(0,\lambda^*),
	\]
	where $C$ depends only on $\Omega$, $p$, $f$, and $\norm{u_\lambda}_{L^1(\Omega)}$ (and in particular it is independent of $\lambda$).
	
	If we assume in addition \eqref{dimension}, Corollary \ref{cor_convex} also gives
	\[
	\norm{u_\lambda}_{L^\infty(\Omega)}\leq C\norm{u_\lambda}_{L^1(\Omega)} \qquad \text{for every}\,\,\,\lambda\in(0,\lambda^*),
	\]
	where $C$ depends only on $\Omega$ and $p$. 
	
	Now, thanks to the uniform $L^1$ bounds for $u_\lambda$ in \cite{San,San1} discussed above, we can pass to the limit as $\lambda\uparrow\lambda^*$ in both bounds, completing the proof of the energy estimates stated after Theorem~\ref{thm_extremal_new} and within Theorem~\ref{thm_extremal_new}, respectively.
\end{proof}

\section{Morrey and $L^q$ estimates}\label{sec_big_n}

In this section we show how our method gives also information about the integrability of stable solutions to \eqref{plap_eq} in higher dimensions. Here we prove the following stronger version of Theorem \ref{thm_big_n}.

\begin{theorem}\label{thm_big_n_morrey}
Let $n$ and $p$ satisfy~\eqref{big_dimension}, and $u$ be a stable regular solution to
$-\Delta_pu=f(u)$ in $B_1\subset\R^n$, with $ f\in C^1(\R)$. 

Then,
\begin{equation}\label{morrey_estimate_big_n}
\norm{u}_{M^{p+\frac{p^2}{\beta-p},\beta}(B_{1/4})}
+ \norm{\nabla u}_{M^{p,\beta}(B_{1/4})}
\leq C\norm{\nabla u}_{L^p(B_{1})} \qquad\text{for every}\,\,\,\beta\in(\overline{\beta},n),
\end{equation} 
where $C$ depends only on $n$, $p$, and $\beta$. The threshold exponent~$\overline{\beta}$ satisfies $p<\overline{\beta}<n$ and is defined by
\begin{equation}\label{beta_bar}
\overline{\beta}:=\begin{cases}
\begin{aligned}
&n-2-2\sqrt{\frac{n-1}{p-1}} \qquad &\text{if}\quad p\geq2, \\
&n-2(p-1)-2\sqrt{(p-1)^2+n-p} \qquad&\text{if}\quad p\in(1,2).
\end{aligned}
\end{cases}
\end{equation}

In addition, if $f$ is nonnegative, then $\norm{\nabla u}_{L^p(B_1)}$ can be replaced by $\norm{u}_{L^1(B_1)}$ in the right-hand side of \eqref{morrey_estimate_big_n}.
\end{theorem}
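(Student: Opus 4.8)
The strategy is to run the machinery of Section~\ref{sec_stability} with a power weight in place of a cut-off, and then to optimise the exponent. Precisely, in the key inequality \eqref{stab5} of Lemma~\ref{lemma_stab5} (which holds for every $f\in C^1$, with no sign assumption, and every $a<n$, $\rho\in(0,2/3)$) I would keep $\rho=1/2$ and analyse the signs of the three integrals on its left-hand side as functions of $a$. When $p\ge 2$ the $u_r^4$-integral has a nonpositive coefficient; using $u_r^2\le\abs{\nabla u}^2$ one absorbs it into the $u_r^2$-integral, and --- if the resulting coefficient $pa-\tfrac{p-1}{4}a^2$ is negative --- one absorbs that one, again via $u_r^2\le\abs{\nabla u}^2$, into the $\int r^{-a}\abs{\nabla u}^p$-integral; the coefficient left in front of the latter is positive exactly when $a<a_*:=2+2\sqrt{(n-1)/(p-1)}$. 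When $p\in(1,2)$ the $u_r^4$-integral is nonnegative and is simply discarded, and then the $u_r^2$-integral is handled in the same way, positivity now holding for $a<a_*:=2(p-1)+2\sqrt{(p-1)^2+n-p}$. In both cases $a_*=n-\overline\beta$; moreover an elementary computation shows that the hypothesis \eqref{big_dimension} is equivalent to $a_*<n-p$, so that $p<\overline\beta<n$ and, in particular, $a_*<n$, which is what Lemma~\ref{lemma_stab5} requires. Bounding $r^{-a}$ above and below on the annulus $B_{3/4}\setminus B_{1/2}$ and on $B_{1/2}$ respectively, one concludes that for every $a\in(0,a_*)$
\begin{equation*}
\int_{B_{1/2}}r^{-a}\abs{\nabla u}^p\,dx\le C\int_{B_1}\abs{\nabla u}^p\,dx,
\end{equation*}
with $C=C(n,p,a)$.

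Next I would upgrade this weighted bound to a genuine Morrey bound by the usual translation-and-dilation trick. Fix $y\in B_{1/4}$ and apply the displayed inequality to the rescaled stable solution $v(x):=u(y+\tfrac34 x)$ on $B_1$ (admissible since $\overline{B}_{3/4}(y)\subset B_1$); after changing variables this yields $\int_{B_{3/8}(y)}\abs{x-y}^{-a}\abs{\nabla u}^p\,dx\le C\int_{B_1}\abs{\nabla u}^p\,dx$. Since $\abs{x-y}^{-a}\ge\rho^{-a}$ on $B_\rho(y)$, it follows that $\int_{B_\rho(y)}\abs{\nabla u}^p\,dx\le C\rho^a\int_{B_1}\abs{\nabla u}^p\,dx$ for all $\rho\le 3/8$ and all $y\in B_{1/4}$ (the range $3/8<\rho<\operatorname{diam}B_{1/4}$ being trivial). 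By the definition of the Morrey norm this is precisely $\norm{\nabla u}_{M^{p,\beta}(B_{1/4})}\le C\norm{\nabla u}_{L^p(B_1)}$ whenever $\beta\ge n-a$; letting $a\uparrow a_*$ one obtains the bound for every $\beta\in(\overline\beta,n)$, which is the second half of \eqref{morrey_estimate_big_n}.

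For the estimate on $u$ itself I would invoke the Sobolev embedding in Morrey spaces (Adams' theorem on the action of the Riesz potential $I_1$): since $\overline\beta>p$, for $\beta\in(\overline\beta,n)$ and $q$ with $\tfrac1q=\tfrac1p-\tfrac1\beta$ --- equivalently $q=p+\tfrac{p^2}{\beta-p}$ --- one has $\norm{u-\overline u}_{M^{q,\beta}(B_{1/4})}\le C\norm{\nabla u}_{M^{p,\beta}(B_{1/4})}$, where $\overline u=\ave_{B_{1/4}}u\,dx$; adding the elementary bound for the Morrey norm of the constant $\overline u$ gives the first half of \eqref{morrey_estimate_big_n}. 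Finally, when $f\ge 0$ one replaces $\norm{\nabla u}_{L^p(B_1)}$ throughout by $\norm{u}_{L^1(B_1)}$ using the interior estimate \eqref{L1controls_p} of Theorem~\ref{thm_interior} (in particular $\norm{\nabla u}_{L^p(B_{1/2})}\le C\norm{u}_{L^1(B_1)}$): running the argument on slightly smaller balls and then covering $B_{1/4}$ by small balls produces the gradient Morrey bound with $\norm{u}_{L^1(B_1)}$ on the right, and for the $u$-bound one moreover controls $\abs{\overline u}$ by $\norm{u}_{L^1(B_1)}$. The bulk of the work is already contained in Section~\ref{sec_stability}; here the only points requiring care are the exact sign bookkeeping that identifies the threshold $\overline\beta$ (and the verification that \eqref{big_dimension} is precisely the condition $\overline\beta>p$), and the passage from the weighted $L^p$ inequality centred at the origin to a genuine Morrey estimate via rescaling and covering.
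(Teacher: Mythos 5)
Your proposal reproduces the paper's own proof of Theorem~\ref{thm_big_n_morrey} essentially step by step: specialize Lemma~\ref{lemma_stab5} to the test function $\varphi=r^{-a/2}$, identify the threshold $a_*=n-\overline{\beta}$ by the same sign bookkeeping (absorb the $u_r^4$ term into the $u_r^2$ term when $p\geq 2$ using $u_r^2\leq\abs{\nabla u}^2$, discard it when $p\in(1,2)$, then absorb the $u_r^2$ term into the $\int r^{-a}\abs{\nabla u}^p$ term once its coefficient is negative), translate and dilate the resulting weighted $L^p$ bound into a Morrey estimate for $\nabla u$, invoke Adams' Riesz-potential theorem in Morrey spaces to pass to $u$, and finally replace $\norm{\nabla u}_{L^p(B_1)}$ by $\norm{u}_{L^1(B_1)}$ via \eqref{L1controls_p} when $f\geq 0$. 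The only cosmetic differences from the paper are the choice $\rho=1/2$ in Lemma~\ref{lemma_stab5} instead of $\rho=1/4$, and subtracting the average $\overline{u}$ before applying Adams' inequality rather than multiplying $u$ by a cut-off; neither affects the argument. One subtlety that both your write-up and the paper treat lightly: the passage from the $\nabla u$-bound to the $u$-bound necessarily produces an affine term (your $\abs{\overline{u}}$; the paper's contribution $u\nabla\eta$ from the cut-off) which is not controlled by $\norm{\nabla u}_{L^p(B_1)}$ alone, so the first summand of \eqref{morrey_estimate_big_n} is really only clean in the $\norm{u}_{L^1(B_1)}$ form, where you correctly absorb $\abs{\overline{u}}\leq C\norm{u}_{L^1(B_1)}$.
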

When $u$ is radially symmetric, we have:
\begin{equation}\label{radial_embedding}
\text{if}\,\,u\,\,\text{is radial and}\,\,\nabla u\in M^{p,\beta}(B_{1/4}),\,\,\text{then}\,\,u\in L^{s}(B_{1/8})\,\,\text{for all}\,\,s<\frac{np}{\beta-p}.
\end{equation}
This follows from \cite[Theorem 1.5]{CCh} after cutting-off $u$ outside $B_{1/8}$ to have compact support in $B_{1/4}$.
Using this embedding, we deduce from \eqref{morrey_estimate_big_n} that if $u$ is radial then $u\in L^s(B_{1/8})$ for every $s<\overline{s}$, where $\overline{s}$ is defined by
\begin{equation*}
\overline{s}:=
\begin{cases}
\begin{aligned}
&\frac{np}{n-2-2\sqrt{\frac{n-1}{p-1}}-p} \qquad &\text{if}\quad p\geq2, \\
&\frac{np}{n-2(p-1)-2\sqrt{(p-1)^2+n-p}-p} \qquad&\text{if}\quad p\in(1,2).
\end{aligned}
\end{cases}
\end{equation*}
For $p\geq2$ this integrability result is optimal under our standing hypothesis \eqref{big_dimension} on~$n$ and~$p$. Indeed, for every $p>1$, Capella, together with the first and third authors~\cite{CCaSa} proved that, if $n>p+4p/(p-1)$ and $u$ is radial, then
\[
\norm{u}_{L^s(B_{1})}\leq C\norm{\nabla u}_{W^{1,p}(B_1)}
\]
for every $s<np/(n-2-2\sqrt{\frac{n-1}{p-1}}-p)$, and that this limit exponent is sharp. Thus, in the radial case and for $p\geq2$, we recover from Theorem \ref{thm_big_n_morrey} and \eqref{radial_embedding} the sharp integrability estimates proved in~\cite{CCaSa}. For $p\in(1,2)$ instead, our method does not give the optimal results, not even in the radial case.

However, the embedding \eqref{radial_embedding} is false for non-radial functions, as shown recently by Charro and the first author in \cite{CCh}. 
For this reason, establishing whether $u\in L^s(B_{1/2})$ for every $s<np/(n-2-2\sqrt{\frac{n-1}{p-1}}-p)$ when $p\geq2$ remains an open question in the nonradial case.
\begin{proof}[Proof of Theorem \ref{thm_big_n_morrey}]
	We assume that $n$ and $p$ satisfy \eqref{big_dimension}. In Lemma \ref{lemma_stab5}, thanks to~\eqref{big_dimension} we can choose\footnote{One can easily check that, under assumption \eqref{big_dimension}, the intervals in \eqref{a_condition} are not empty and they have nonempty intersection with $(-\infty,n)$.
} an exponent $a<n$ such that
\begin{equation}\label{a_condition}
\begin{cases}
\begin{aligned}
& \frac{4p}{p-1}<a<2+2\sqrt{\frac{n-1}{p-1}} \qquad &\text{if}\quad p\geq2, \\
&  4p<a<2(p-1)+2\sqrt{(p-1)^2+n-p} \qquad&\text{if}\quad p\in(1,2),
\end{aligned}
\end{cases}
\end{equation}
and taking $\rho=1/4$ we obtain
\begin{equation}\label{new_stab5}
\begin{split}
&(n-p-a)\int_{B_{1/4}}r^{-a}\abs{\nabla u}^p\,dx+\left(pa-\frac{a^2}{4}\right)\int_{B_{1/4}}r^{-a}\abs{\nabla u}^{p-2}u_{r}^2\,dx
\\
&\hspace{2cm}
-\frac{a^2}{4}\left(p-2\right)\int_{B_{1/4}}r^{-a}\abs{\nabla u}^{p-4}u_{r}^4\,dx
\leq C\int_{B_{1/2}\setminus B_{1/4}} r^{-a}\abs{\nabla u}^p\,dx,
\end{split}
\end{equation}
where $C$ depends only on $n$, $p$, and $a$.

The third term in the left-hand side of \eqref{new_stab5} changes sign whether $p$ is above or below $2$. When it is positive --- i.e., when $p\in(1,2)$ --- we simply discard it. When it is nonpositive --- i.e., when $p\geq2$ --- we use $u_{r}^2\leq\abs{\nabla u}^2$
in order to merge the second and the third integrals in~\eqref{new_stab5}. In this way, for every $ p>1 $ we obtain
\begin{equation}\label{new_stab5_1}
\begin{split}
(n-p-a)&\int_{B_{1/4}}r^{-a}\abs{\nabla u}^p\,dx+\left(pa-\frac{a^2}{4}\max{\{1,p-1\}}\right)\int_{B_{1/4}}r^{-a}\abs{\nabla u}^{p-2}u_{r}^2\,dx
\\
&\leq C\int_{B_{1/2}\setminus B_{1/4}}\abs{\nabla u}^p\,dx.
\end{split}
\end{equation}

From the lower bound for $a$ in \eqref{a_condition} we deduce that the constant in front of the second integral in \eqref{new_stab5_1} is negative, and thus we can bound the left-hand side of \eqref{new_stab5_1} from below by
\[
\left(n-p-a+pa-\frac{a^2}{4}\max{\{1,p-1\}}\right)\int_{B_{1/4}}r^{-a}\abs{\nabla u}^p\,dx.
\]
Since the constant in front of the above integral is positive --- thanks now to the upper bound for $a$ in \eqref{a_condition} ---
we deduce that 
\begin{equation*}
\int_{B_{1/4}}r^{-a}\abs{\nabla u}^p\,dx\leq C \int_{B_{1/2}\setminus B_{1/4}}\abs{\nabla u}^p\,dx
\leq C \norm{\nabla u}^p_{L^p(B_1)},
\end{equation*}
where $C$ depends only on $n$, $p$, and $a$.

Applying this estimate (rescaled) to $ u_y(x):=u(x+y) $ with $ y\in \overline{B}_{1/4}$, since $B_{3/4}(y)\subset B_1$ we obtain for all $\rho\in(0,\frac14\cdot\frac34)$
\begin{equation}\label{mor_est}
\rho^{-a}\int_{B_\rho(y)}\abs{\nabla u}^p\,dx
\leq
\int_{B_\rho(y)}\abs{x-y}^{-a}\abs{\nabla u}^p\,dx\leq C\norm{\nabla u}^p_{L^p(B_{1})}.
\end{equation}
This proves that $ \nabla u\in M^{p,\beta}(B_{1/4}) $ for every $ \beta:=n-a>\overline{\beta} $, where $\overline{\beta}$ was defined in~\eqref{beta_bar}.

The fact that $\overline{\beta}>p$ follows from
	\begin{equation*}
	p< n-2-2\sqrt{\frac{n-1}{p-1}}\qquad\text{if}\,\,\,p\geq2\,\,\,\text{and}\,\,\,n > p+\frac{4p}{p-1}
	\end{equation*}
	and
	\begin{equation*}
	p< n-2(p-1)-2\sqrt{(p-1)^2+n-p}\qquad\text{if}\,\,\,p\in(1,2)\,\,\,\text{and}\,\,\,	n> 5p.
	\end{equation*}	

Therefore, after cutting-off $u$ outside of $B_{1/8}$ to have compact support in $B_{1/4}$, we can apply Proposition 3.1 and Theorems 3.1 and 3.2 in \cite{A} (see also the proof in \cite[Section 4]{CCh}) to deduce that $ u\in M^{\frac{p\beta}{\beta-p},\beta}(B_{1/8})$ for every $ \beta\in(\overline{\beta},n) $. 
Note that $M^{\frac{p\beta}{\beta-p},\beta}(B_{1/8}) = M^{p+\frac{p^2}{\beta-p},\beta}(B_{1/8})  $ and, thus, the first index is decreasing in $\beta$. 
Observe also that, thanks to \eqref{mor_est}, we control both the norms of $u$ in $M^{p+\frac{p^2}{\beta-p},\beta}(B_{1/8})$ and $\nabla u$ in $M^{p,\beta}(B_{1/8})$ in terms of $\norm{\nabla u}_{L^p(B_1)}$. This estimate in $B_{1/8}$ can be stated in $B_{1/4}$, as in \eqref{morrey_estimate_big_n}, using a scaling and covering argument.

Now that we have established \eqref{morrey_estimate_big_n}, if we further assume that $f$ is nonnegative, then we can replace $\norm{\nabla u}_{L^p(B_1)}$ by $\norm{u}_{L^1(B_1)}$ in the right-hand side of \eqref{morrey_estimate_big_n}. This can be done using~\eqref{L1controls_p} (properly rescaled) and a covering argument.
\end{proof}

Finally, we give the:

\begin{proof}[Proof of Theorem \ref{thm_big_n}]
	Estimate \eqref{estimate_big_n} follows from Theorem \ref{thm_big_n_morrey} by taking $q=p+\frac{p^2}{\beta-p}$ and using a covering argument. Observe that $q$ is defined as a decreasing function of~$\beta$, thus the upper bound for $q$ in \eqref{estimate_big_n} is given by $q_n=p+\frac{p^2}{\overline{\beta}-p}$.
	
	As in the proof of Theorem \ref{thm_big_n_morrey}, if we further assume $f$ to be nonnegative, then we can replace $\norm{\nabla u}_{L^p(B_1)}$ by $\norm{u}_{L^1(B_1)}$ in the right-hand side of \eqref{estimate_big_n}. This can be done using \eqref{L1controls_p} (properly rescaled) and a covering argument.
\end{proof}

\appendix
\section{Technical lemmata}\label{appendix_lemmata}
The first lemma of this section is a simple estimate for the $ L^1 $-norm of $\plaplacian u$. We assume that $u$ is a regular solution of \eqref{plap_eq} in $B_R$ and that the nonlinearity $f$ is nonnegative. Since we use this bound several times throughout the paper, we report it here together with its elementary proof.
\begin{lemma}\label{positive_plap}
	Let $u$ be a regular solution of $-\Delta_p u=f(u)$ in $B_R\subset\R^n$, 
	with $f\in C^1(\R)$ nonnegative. Then, for every $ \rho\in(0,R) $, we have
	\[
	\int_{B_\rho}\abs{\Delta_p u}\,dx\leq\frac{C}{R-\rho}\int_{B_R}\abs{\nabla u}^{p-1}\,dx,
	\]
	where $ C>0 $ is a constant depending only on $n$.
\end{lemma}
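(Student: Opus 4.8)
The plan is to estimate $\int_{B_\rho}|\Delta_p u|\,dx$ directly by testing the equation against a cut-off function. Since $f\ge 0$, we have $-\Delta_p u = f(u) \ge 0$, so $|\Delta_p u| = -\Delta_p u = f(u)$ pointwise (as an $L^1_{\rm loc}$ function, since $u$ is a regular solution). Hence it suffices to bound $\int_{B_\rho} f(u)\,dx$. First I would pick a cut-off $\zeta \in C_c^\infty(B_R)$ with $0\le\zeta\le 1$, $\zeta\equiv 1$ on $B_\rho$, and $|\nabla\zeta|\le C/(R-\rho)$; for instance a standard mollified version of $\min\{1,\,(R-r)/(R-\rho)\}$ works, with $C$ an absolute constant. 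Then, using $\zeta$ as test function in the weak formulation of $-\Delta_p u = f(u)$,
\begin{equation*}
\int_{B_\rho} f(u)\,dx \;\le\; \int_{B_R} f(u)\,\zeta\,dx \;=\; \int_{B_R} |\nabla u|^{p-2}\nabla u\cdot\nabla\zeta\,dx,
\end{equation*}
where the first inequality uses $f(u)\ge 0$ and $0\le\zeta\le 1$ with $\zeta\equiv 1$ on $B_\rho$.

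Next I would estimate the right-hand side by Cauchy--Schwarz (or simply $|\nabla u|^{p-2}\nabla u\cdot\nabla\zeta \le |\nabla u|^{p-1}|\nabla\zeta|$) and the bound on $|\nabla\zeta|$:
\begin{equation*}
\int_{B_R} |\nabla u|^{p-2}\nabla u\cdot\nabla\zeta\,dx \;\le\; \int_{B_R} |\nabla u|^{p-1}\,|\nabla\zeta|\,dx \;\le\; \frac{C}{R-\rho}\int_{B_R} |\nabla u|^{p-1}\,dx.
\end{equation*}
Combining the last two displays with $|\Delta_p u| = f(u)$ gives exactly the claimed inequality
\begin{equation*}
\int_{B_\rho} |\Delta_p u|\,dx \;\le\; \frac{C}{R-\rho}\int_{B_R}|\nabla u|^{p-1}\,dx,
\end{equation*}
with $C$ depending only on $n$ (through the choice of cut-off, which can in fact be taken with a dimension-independent constant, but stating dependence on $n$ is harmless).

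The only mildly delicate point is justifying that $\zeta$ is an admissible test function and that the integrations are legitimate: this is immediate since $u$ is a regular solution, hence $f(u)\in L^\infty_{\rm loc}$ and $u\in C^{1,\vartheta}$, so $|\nabla u|^{p-1}\in L^\infty_{\rm loc}(B_R)$ and all integrals over compact subsets of $B_R$ are finite; a Lipschitz $\zeta$ with compact support in $B_R$ is admissible in the weak formulation by the usual density argument, so one does not even need to mollify. There is no real obstacle here — the lemma is a one-line consequence of the sign of $f$ and the weak formulation; the statement is recorded separately only because it is invoked repeatedly in Sections~\ref{sec_higher} and~\ref{sec_interior}.
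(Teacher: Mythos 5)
Your proof is correct and takes essentially the same approach as the paper: a cut-off test function in the (weak) formulation of the equation, together with $-\Delta_p u = f(u) \geq 0$. The paper phrases it via the distributional divergence of $|\nabla u|^{p-2}\nabla u$ (justified by $\nabla u \in W^{1,2}_{\sigma,\mathrm{loc}}$) rather than by directly inserting $\zeta$ into the weak formulation and using $|\Delta_p u| = f(u)$, but the two are equivalent.
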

\begin{proof}
	First, we recall that since $\nabla u\in W^{1,2}_{\sigma,\text{loc}}(B_R) $ --- see Remark \ref{rmk_integrability} --- we can take the distributional divergence of $\abs{\nabla u}^{p-2}\nabla u$.
	
	We choose a nonnegative function $ \eta\in C_c^1(B_R)$ such that $ \eta\equiv1 $ in $B_\rho$ and $ \abs{\nabla\eta}\leq\frac{C}{R-\rho} $. Then, since $ -\Delta_p u\geq0 $, we have
	\begin{equation*}
	\begin{split}
	\int_{B_R}\abs{\Delta_p u}\eta\,dx
	&=-\int_{B_R}\diver\left(\abs{\nabla u}^{p-2}\nabla u\right)\eta\,dx
	=\int_{B_R}\abs{\nabla u}^{p-2}\nabla u\cdot\nabla\eta\,dx
	\\
	&\leq\frac{C}{R-\rho}\int_{B_R}\abs{\nabla u}^{p-1}\,dx,
	\end{split}
	\end{equation*}
	and this concludes the proof.
\end{proof}

In Section \ref{sec_doubling}, we also need an elementary Rellich-type criterion to establish strong convergence in $L^{p-1}(B_1)$ whenever $p\geq2$. This result will be applied with $w_k$ in its statement being a partial derivative of a stable solution $u_k$.
\begin{lemma}\label{lemma_convergence}
	Let $p\geq2$ and $(w_k)_k$ be a sequence of weakly differentiable functions in $L^{p-1}(B_1)$ such that
	\[
	\norm{w_k}_{L^{p-1}(B_1)}\leq C \qquad \text{and} \qquad  \norm{\abs{w_k}^{p-2}\nabla w_k}_{L^1(B_1)}\leq C
	\]
	for some constant $C$ independent of $k$. 
	
	Then, there exists a subsequence of $(w_k)_k$ that converges strongly in $L^{p-1}(B_1)$.
\end{lemma}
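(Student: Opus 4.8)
The natural strategy is to reduce the claim to the classical Rellich–Kondrachov theorem via a change of unknown that "undoes" the weight $|w|^{p-2}$. Observe that for $p\geq 2$ the map $\Phi(s):=|s|^{p-2}s=|s|^{p-1}\operatorname{sgn}(s)$ is $C^1$ on $\mathbb R$ with $\Phi'(s)=(p-1)|s|^{p-2}$, so if we set $v_k:=\Phi(w_k)=|w_k|^{p-2}w_k$, then formally $\nabla v_k=(p-1)|w_k|^{p-2}\nabla w_k$. The plan is therefore: (i) show that $v_k$ is bounded in $W^{1,1}(B_1)$, uniformly in $k$; (ii) invoke the compact embedding $W^{1,1}(B_1)\hookrightarrow L^1(B_1)$ to extract a subsequence with $v_k\to v$ strongly in $L^1(B_1)$ and a.e. in $B_1$; (iii) transfer this back to $w_k$ using the inverse map $\Phi^{-1}(t)=|t|^{1/(p-1)-1}t$, which is continuous, together with the uniform $L^{p-1}$ bound on $w_k$, to conclude $w_k\to w:=\Phi^{-1}(v)$ strongly in $L^{p-1}(B_1)$.

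For step (i), the $L^1(B_1)$ bound on $v_k=|w_k|^{p-1}\operatorname{sgn}(w_k)$ is immediate since $|v_k|=|w_k|^{p-1}$ and $\|w_k\|_{L^{p-1}(B_1)}\leq C$ gives $\|v_k\|_{L^1(B_1)}=\|w_k\|_{L^{p-1}(B_1)}^{p-1}\leq C$; the bound on $\nabla v_k$ is exactly the second hypothesis, $\|\nabla v_k\|_{L^1(B_1)}=(p-1)\||w_k|^{p-2}\nabla w_k\|_{L^1(B_1)}\leq C$. One should justify that $v_k$ is indeed weakly differentiable with this weak gradient: this is a standard chain-rule fact for the Lipschitz-on-compacts function $\Phi$ composed with a Sobolev function, or can be obtained by approximation (truncating $\Phi$ to be globally Lipschitz and passing to the limit using the $L^1$ control of $|w_k|^{p-2}\nabla w_k$). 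For step (iii), from $v_k\to v$ a.e. and the continuity of $\Phi^{-1}$ we get $w_k\to w:=\Phi^{-1}(v)$ a.e.; then $|w_k|^{p-1}=|v_k|\to|v|=|w|^{p-1}$ in $L^1(B_1)$, so $|w_k|^{p-1}$ converges in $L^1$ and a.e., and a standard application of the generalized dominated convergence theorem (or Vitali's theorem, using that $\{|w_k|^{p-1}\}$ is equiintegrable because it converges in $L^1$) yields $\int_{B_1}|w_k-w|^{p-1}\to 0$; here one uses the elementary inequality $|a-b|^{p-1}\leq C_p(|a|^{p-1}+|b|^{p-1})$ valid for $p\geq 2$ so that $|w_k-w|^{p-1}$ is dominated by an equiintegrable sequence and tends to $0$ a.e.

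The only genuinely delicate point is the justification of the chain rule $\nabla(|w_k|^{p-2}w_k)=(p-1)|w_k|^{p-2}\nabla w_k$ in the weak sense when $w_k$ is merely assumed weakly differentiable with $|w_k|^{p-2}\nabla w_k\in L^1$ (note $\nabla w_k$ itself need not be locally integrable near $\{w_k=0\}$ when $p>2$, though this set is where the weight vanishes, so the product is still fine). I would handle this by fixing $\delta>0$, working with the globally Lipschitz truncation $\Phi_\delta$ that agrees with $\Phi$ outside a $\delta$-neighborhood of $0$ and is linear near $0$, applying the standard chain rule for Lipschitz functions of Sobolev functions to $\Phi_\delta(w_k)$, and then letting $\delta\downarrow 0$: the left sides converge in $L^1$ since $\Phi_\delta\to\Phi$ uniformly, and the right sides converge in $L^1$ by dominated convergence using the hypothesis $|w_k|^{p-2}\nabla w_k\in L^1$. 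Everything else is routine. Once (i)–(iii) are in place the lemma follows.
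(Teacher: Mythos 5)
Your argument is correct and follows the same core strategy as the paper: pass to $\widetilde w_k:=|w_k|^{p-2}w_k$, observe that the hypotheses give a uniform $W^{1,1}(B_1)$ bound, invoke Rellich--Kondrachov to get $\widetilde w_k\to\widetilde w$ in $L^1$ up to a subsequence, and then transfer the convergence back to $w_k$. The difference is in the closing step. The paper does not pass to a.e.\ convergence at all; instead it applies the pointwise monotonicity inequality for the $p$-Laplacian (for $p\geq 2$), $|a-b|^p\leq C_p\big(|a|^{p-2}a-|b|^{p-2}b\big)\cdot(a-b)$, to get directly
$$
|w_k-w|^{p-1}\leq C_p\big|\,|w_k|^{p-2}w_k-|w|^{p-2}w\,\big|=C_p\,|\widetilde w_k-\widetilde w|,
$$
and hence $\|w_k-w\|_{L^{p-1}(B_1)}^{p-1}\leq C_p\|\widetilde w_k-\widetilde w\|_{L^1(B_1)}\to 0$ in one line. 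You instead extract a further a.e.-convergent subsequence of $\widetilde w_k$, use continuity of $\Phi^{-1}$ to get a.e.\ convergence of $w_k$, and then close with Vitali (or generalized dominated convergence) using that $|w_k-w|^{p-1}$ is dominated by the equiintegrable sequence $C_p(|w_k|^{p-1}+|w|^{p-1})$. Both are valid; the paper's route is slightly shorter since it avoids the a.e.\ subsequence and any appeal to equiintegrability. Your additional care about justifying the weak chain rule $\nabla(|w_k|^{p-2}w_k)=(p-1)|w_k|^{p-2}\nabla w_k$ via Lipschitz truncation is a point the paper glosses over (it is not an issue in the paper's application because of the regularity from Damascelli--Sciunzi discussed in Remark~2.2, but it is worth mentioning at the level of generality of the lemma).
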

\begin{proof}
	For $\widetilde{w}_k:=\abs{w_k}^{p-2}w_k$ we have that
	\[
	\norm{\widetilde{w}_k}_{L^{1}(B_1)}\leq C \qquad \text{and} \qquad  \norm{\nabla \widetilde{w}_k}_{L^1(B_1)}\leq C,
	\]
	where $C$ does not depend on $k$. Thus, the sequence $(\widetilde{w}_k)_k$ is bounded in $W^{1,1}(B_1)$ and from the compactness of the immersion $ W^{1,1}(B_{1})\hookrightarrow L^1(B_{1}) $ we obtain that, up to a subsequence,
	\[
	\widetilde{w}_k\longrightarrow \widetilde{w} \qquad \text{strongly in}\,\,L^{1}(B_{1}),
	\]
	for some $\widetilde{w}\in L^1(B_1)$.
	
	Now, defining $w:=\abs{\widetilde{w}}^{p'-2}\widetilde{w}$, we have that
	\[
	\abs{w_k-w}^{p-1} \leq C_{n,p} \abs{\abs{w_k}^{p-2}w_k - \abs{w}^{p-2}w	} = C_{n,p} \abs{\widetilde{w}_k-\widetilde{w}},
	\]
	where we used that for $p\geq2$ we have $\abs{a-b}^p \leq C_{n,p}(\abs{a}^{p-2}a-\abs{b}^{p-2}b) \cdot (a-b)$ if~$a$ and~$b$ belong to $\R^n$; see \cite[Chapter 1, Lemma 4.4]{DiBbook}.
	As a consequence, up to subsequences, from the strong convergence of $\widetilde{w}_k$ to $\widetilde{w}$ in $L^1(B_1)$ we deduce that
	\[
	w_k\longrightarrow w \qquad \text{strongly in}\,\,L^{p-1}(B_{1}).
	\]
	This finishes the proof.
\end{proof}

The following result is a new interpolation inequality adapted to the $p$-Laplacian which holds for $p\geq2$ and every nice function in $B_1\subset\R^n$, independently of its boundary values. It gives a control on the $L^{p-1}$-norm of $\nabla u$ in terms of the weighted $L^1$-norm of the second derivatives of $u$ plus the $L^{p-1}$-norm of the function. We assume some regularity hypotheses on the function which are fulfilled by every stable regular solutions to $-\plaplacian u=f(u)$ with $f\in C^1(\R)$, by Remark \ref{rmk_integrability}. Note that, as for the interpolation inequalities of \cite[Theorem~7.28]{GT}, its validity in $\R^n$ follows immediately once it is proved in dimension one.

\begin{proposition}\label{prop5.2}\footnote{\label{football}The previous and printed versions of this paper claimed the same interpolation inequality in balls instead of cubes, and this made the proof (and probably the statement) not to be correct. Indeed, proving inequality (A.5) in the previous versions contained a mistake: it fails when, given $\e \in (0,1)$, one has $|I|<<\e$ ---as it occurs later in the proof in $\R^n$ when 1d sections of the ball are very small compared to $\e$. The proof that we give here below, now in cubes, is correct. Despite this error, all other results of the previous and printed versions of the article remain correct since, within Step 1 of the proof of Theorem~1.1,  the interpolation inequality may be applied in small enough cubes covering the ball $B_{1/2}$ instead of applying it in the whole ball, as already written in the current version.}
Let $p\geq2$, $\e\in(0,1)$, $Q=(0,1)^n$ be the unit cube of $\R^n$, and $u\in(C^{1}\cap W^{1,p-1})(Q)$ satisfy $\nabla u\in W^{1,1}_\sigma(Q)$ with $\sigma=\abs{\nabla u}^{p-2}$.

Then,
\begin{equation}\label{5.2}
\int_{B_{1}}\abs{\nabla u}^{p-1}\,dx \leq C\left( \e \int_{B_{1}}\abs{\nabla u}^{p-2}\lvert D^2u\rvert\,dx+ \e^{1-p}\int_{B_{1}}\abs{u}^{p-1}\,dx\right),
\end{equation}
where $C$ depends only on $n$ and $p$.
\end{proposition}

\begin{proof}
First, we observe that the regularity assumptions on $u$ ensure that all the integrals in~\eqref{5.2} are well defined.

We first prove \eqref{5.2} in dimension $n=1$. 
Given $\e>0$, let $u$ have the regularity assumed in the statement with $B_1$ replaced by $(0,\e)$. 
We claim that  
\begin{equation}\label{5.2claim1}
\inf_{(0,\e)}\abs{u'}^{p-1}\leq C \e^{-p}\int_{0}^{\e}\abs{u}^{p-1}\,dx,
\end{equation}
where the constant $C>0$ depends only on $p$.

The inequality is trivial if $\inf_{(0,\e)}\abs{u'}=0$. Therefore, we assume that $\inf_{(0,\e)}\abs{u'}>0$ and, by Bolzano's theorem, either $u'>0$ in $(0,\e)$ or $u'<0$ in $(0,\e)$. By eventually changing $u$ to $-u$, we may assume that $u'>0$ in $(0,\e)$.
For $a, b\in\R$ satisfying $0<a<\frac\e4<\frac{3\e}4<b<\e$, we have
\[
\frac\e2\inf_{(0,\e)}u'\leq(b-a)\inf_{(0,\e)}u'\leq(b-a)\inf_{(a,b)}u'\leq\int_{a}^{b}u'\,dx=u(b)-u(a).
\]
Integrating this inequality in $b\in(\frac{3\e}4,\e)$ we get
\[
\frac{\e^2}{8}\inf_{(0,\e)}u'\leq\int_{\frac{3\e}{4}}^{\e}u\,dx-\frac{\e}{4}u(a) 
\leq\int_{0}^{\e}\abs{u}\,dx-\frac{\e}{4}u(a).
\]
Integrating now in $a\in(0,\frac\e4)$,
\[
\frac{\e^3}{32}\inf_{(0,\e)}u'
\leq\frac\e4\int_{0}^{\e}\abs{u}\,dx-\frac{\e}{4}\int_{0}^{\frac\e4}u\,dx\leq\frac\e2\int_{0}^{\e}\abs{u}\,dx.
\]
Thus
\[
\inf_{(0,\e)}\abs{u'}=\inf_{(0,\e)}u'\leq\frac{16}{\e^2}\int_{0}^{\e}\abs{u}\,dx,
\]
and raising this inequality to $p-1\geq1$ we get
\begin{equation*}
\begin{split}
\inf_{(0,\e)}\abs{u'}^{p-1}&\leq C\e^{-2(p-1)}\left(\int_{0}^\e\abs{u}\,dx\right)^{p-1}
\leq C\e^{-p}\int_{0}^\e\abs{u}^{p-1}\,dx,
\end{split}
\end{equation*}
where $C$ depends only on $p$. This proves \eqref{5.2claim1}.

Therefore, by \eqref{5.2claim1}, there exists a point $x_0\in(0,\e)$ such that 
\begin{equation}\label{5.2_1}
\abs{u'(x_0)}^{p-1}\leq 2C \e^{-p}\int_{0}^{\e}\abs{u}^{p-1}\,dx.
\end{equation}
Let $x\in(0,\e)$ and $I_0\subset(0,\e)$ be the interval with end points $x_0$ and $x$. Then
\[
\abs{\abs{u'(x)}^{p-1}-\abs{u'(x_0)}^{p-1}} = \abs{\int_{I_0}\left(\abs{u'}^{p-1}\right)'\,dx},
\] 
and hence
\[
\abs{u'(x)}^{p-1} \leq (p-1)\int_{0}^\e\abs{u'}^{p-2}\abs{u''}\,dx+\abs{u'(x_0)}^{p-1}.
\] 
Combining this inequality with \eqref{5.2_1} and integrating in $x\in(0,\e)$, we obtain
\begin{equation}\label{5.2claim2}
\int_{0}^{\e}\abs{u'}^{p-1}\,dx \leq (p-1)\e\int_{0}^\e\abs{u'}^{p-2}\abs{u''}\,dx+C \e^{1-p}\int_{0}^{\e}\abs{u}^{p-1}\,dx.
\end{equation}

Note that we have not required any specific boundary values for $u$. Now, for any integer  $k>1$ we divide $(0,1)$ into $k$ disjoint intervals of length $\e:=1/k$. Now, using \eqref{5.2claim2} in each of these intervals of length $\e:=1/k$ and adding up all the inequalities given by~\eqref{5.2claim2}, we deduce
\begin{equation}\label{5.2claim3}
\int_{0}^1\abs{u'}^{p-1}\,dx \leq\frac{p-1}{k}\int_{0}^1\abs{u'}^{p-2}\abs{u''}\,dx+C k^{p-1}\int_{0}^1\abs{u}^{p-1}\,dx
\end{equation}
Since $\e\in (0,1)$, there exists an integer $k>1$ such that $\frac{1}{\e}\le k < \frac{2}{\e}$. This establishes the proposition in dimension one --- after replacing $\e$ by $\e/(p-1)$.

Finally, for $u$ defined in $(0,1)^n$,  denote $x=(x_1,x')\in\R\times\R^{n-1}$. Using \eqref{5.2claim3} with $n=1$ for every $x'$, we get
\begin{equation*}
\begin{split}
\int_{Q}\abs{u_{x_1}}^{p-1}\,dx 
&=\int_{(0,1)^{n-1}}\,dx'\int_0^1\,dx_1\abs{u_{x_1}(x)}^{p-1}
\\ 
&\leq C\, \e\int_{(0,1)^{n-1}}\,dx'\int_0^1\,dx_1\abs{u_{x_1}(x)}^{p-2}\abs{u_{x_1x_1}(x)} \\
&\hspace{2cm} +C\, \e^{1-p}\int_{(0,1)^{n-1}}\,dx'\int_0^1\,dx_1\abs{u(x)}^{p-1}
\\
&= C\left(\e\int_{Q}\abs{u_{x_1}(x)}^{p-2}\abs{u_{x_1x_1}(x)}\,dx+ \e^{1-p}\int_{Q}\abs{u(x)}^{p-1}\,dx\right).
\end{split}
\end{equation*}
Since the same inequality holds for the partial derivatives with respect to each variable $x_k$ instead of $x_1$, adding up the inequalities we conclude \eqref{5.2}. 
\end{proof}

We conclude this appendix with the statement of a general abstract lemma due to Simon~\cite{Simon} --- see also \cite[Lemma 3.1]{CSV}. It is extremely useful to ``absorb errors'' in larger balls of quantities controlled in smaller balls.

\begin{lemma}\label{lemma_simon}
Let $\beta\in \R$ and $C_0>0$. Let $\mathcal S: \mathcal B \rightarrow [0,+\infty]$ be a nonnegative function defined on the class $\mathcal B$  of open balls $B\subset \R^n$ and satisfying the following subadditivity property:
\[ \mbox{if}\quad  B \subset \bigcup_{j=1}^N B_j \quad  \mbox{ then }\quad \mathcal S(B)\le \sum_{j=1}^N \mathcal S(B_j). \]
Assume also that $\mathcal S(B_1)< \infty$.

Then, there exists $\delta$, depending only on $n$ and $\beta$, such that if
\begin{equation*}\label{hp-lem}
\rho^\beta \mathcal S\bigl(B_{\rho/4}(y)\bigr) \le \delta \rho^\beta \mathcal S\bigl(B_\rho(y)\bigr)+ C_0\quad \mbox{whenever }B_\rho(y)\subset B_1,
\end{equation*}
then
\[ \mathcal S(B_{1/2}) \le CC_0,\]
where $C$ depends only on $n$ and $\beta$.
\end{lemma}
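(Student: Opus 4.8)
The plan is to run a Whitney-covering and self-absorption argument of Simon type. Fix once and for all a Whitney-type covering $\mathcal{W}=\{Q\}$ of $B_1$ by open balls $Q=B_{\rho_Q}(x_Q)$ with $\rho_Q=\tfrac18\,\mathrm{dist}(x_Q,\partial B_1)$, of bounded overlap and with the usual comparability property that neighbouring balls have comparable radii; note that $\overline{B}_{4\rho_Q}(x_Q)\subset B_1$ for every $Q\in\mathcal W$. Choose an exponent $\gamma=\gamma(\beta)$ with $\gamma>\max\{-1,\beta-1\}$ (for instance $\gamma=\max\{1,\beta\}$) and consider the weighted sum
\[
\Sigma:=\sum_{Q\in\mathcal W}\rho_Q^{\,n+\gamma}\,\mathcal S(Q).
\]
Two preliminary remarks. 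First, the Whitney balls meeting $B_{1/2}$ are finitely many and have radii bounded between two positive constants, so by subadditivity $\mathcal S(B_{1/2})\le\sum_{Q\cap B_{1/2}\ne\emptyset}\mathcal S(Q)\le C(n,\gamma)\,\Sigma$. Second, $\mathcal S(Q)\le\mathcal S(B_1)<\infty$ by subadditivity, while $\sum_Q\rho_Q^{\,n+\gamma}\simeq\int_{B_1}\mathrm{dist}(x,\partial B_1)^{\gamma}\,dx<\infty$ because $\gamma>-1$; hence $\Sigma<\infty$. Therefore it suffices to show $\Sigma\le C(n,\beta)\,C_0$, which we do by absorbing $\Sigma$ against itself.

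For each $Q\in\mathcal W$ apply the hypothesis with $\rho=4\rho_Q$ centred at $x_Q$; this is legitimate since $\overline{B}_{4\rho_Q}(x_Q)\subset B_1$, and after dividing by $(4\rho_Q)^{\beta}$ it reads $\mathcal S(Q)\le\delta\,\mathcal S\!\big(B_{4\rho_Q}(x_Q)\big)+(4\rho_Q)^{-\beta}C_0$. By subadditivity and Whitney comparability, $\mathcal S\!\big(B_{4\rho_Q}(x_Q)\big)\le\sum_{Q'\sim Q}\mathcal S(Q')$, the sum running over the at most $C_n$ neighbouring Whitney balls, which all satisfy $\rho_{Q'}\simeq\rho_Q$. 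Multiplying by $\rho_Q^{\,n+\gamma}$ and using $\rho_Q\simeq\rho_{Q'}$ to transfer the radius weights, one obtains
\[
\rho_Q^{\,n+\gamma}\mathcal S(Q)\ \le\ \delta\,C_n'\!\!\sum_{Q'\sim Q}\rho_{Q'}^{\,n+\gamma}\mathcal S(Q')\ +\ C(n,\beta)\,C_0\,\rho_Q^{\,n+\gamma-\beta}.
\]
Summing over $Q\in\mathcal W$, bounded overlap gives that each $Q'$ occurs as a neighbour of at most $C_n$ balls $Q$, so the double sum is $\le C_n\Sigma$; moreover $\sum_Q\rho_Q^{\,n+\gamma-\beta}\simeq\int_{B_1}\mathrm{dist}(x,\partial B_1)^{\gamma-\beta}\,dx=:A(n,\beta)<\infty$ since $\gamma-\beta>-1$. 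Hence $\Sigma\le\delta\,C_n''\,\Sigma+C(n,\beta)A(n,\beta)\,C_0$. Choosing $\delta=\delta(n,\beta)$ so small that $\delta\,C_n''\le\tfrac12$, and recalling that $\Sigma<\infty$, we absorb the first term and conclude $\Sigma\le 2C(n,\beta)A(n,\beta)\,C_0$; combined with the first preliminary remark this yields $\mathcal S(B_{1/2})\le C(n,\beta)\,C_0$.

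The points that need care are exactly the ones just used: keeping every dilated ball on which the hypothesis is invoked inside $B_1$ — this is precisely the role of the Whitney bookkeeping, i.e.\ $\overline B_{4\rho_Q}(x_Q)\subset B_1$; the choice of the weight exponent $\gamma$, which must simultaneously make $\sum_Q\rho_Q^{n+\gamma}\mathcal S(Q)$ and $\sum_Q\rho_Q^{n+\gamma-\beta}$ convergent, forcing $\gamma>\max\{-1,\beta-1\}$ and hence the dependence on $\beta$; the hypothesis $\mathcal S(B_1)<\infty$, which is what allows the self-absorption of $\Sigma$; and the choice of $\delta$ small in terms of the dimensional overlap and comparability constants so that the contraction closes. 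The sign of $\beta$ plays no special role beyond entering the admissible range of $\gamma$ and the powers of the Whitney radii; in particular no iteration over an exhausting family of balls is needed, the single weighted identity above suffices.
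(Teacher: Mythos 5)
The paper states this lemma without proof, attributing it to Simon~\cite{Simon} and referring the reader to~\cite[Lemma~3.1]{CSV}, so there is no proof in the paper itself to compare against. Your Whitney-covering argument is a correct and self-contained proof of the statement. The scheme is set up so that each dilated ball $B_{4\rho_Q}(x_Q)$ remains inside $B_1$ (this is where the Whitney radius $\rho_Q=\tfrac18\,\mathrm{dist}(x_Q,\partial B_1)$ matters); the weight exponent $\gamma>\max\{-1,\beta-1\}$ is chosen precisely so that both $\sum_Q\rho_Q^{\,n+\gamma}$ and $\sum_Q\rho_Q^{\,n+\gamma-\beta}$ are comparable to convergent integrals of a power of $\mathrm{dist}(\cdot,\partial B_1)$; bounded overlap and comparability of neighbouring Whitney radii are used at the right places, namely to keep the cover of $B_{4\rho_Q}(x_Q)$ finite (so that subadditivity applies), to transfer the weight $\rho_Q^{\,n+\gamma}$ onto the neighbours $Q'$, and to bound the multiplicity when summing the neighbour contributions; and the hypothesis $\mathcal S(B_1)<\infty$ is invoked, as it must be, to justify the self-absorption of $\Sigma$. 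The dependence of $\delta$ on $n$ and $\beta$ through the comparability constant raised to the power $n+\gamma(\beta)$ is consistent with the statement. By way of comparison, the usual presentation of this Simon-type absorption (as in the cited sources) proceeds by bounding a supremum of $\rho^\beta\mathcal S(B_{\rho/4}(y))$ over near-maximizing balls, restricted first to scales $\rho\ge t$ to ensure finiteness, and then letting $t\downarrow 0$; your single weighted sum over the Whitney family treats all scales simultaneously and replaces the explicit iteration by the convergence of $\sum_Q\rho_Q^{\,n+\gamma-\beta}$. Both routes rest on the same smallness of $\delta$ and the same finiteness hypothesis; yours is a valid, slightly more packaged alternative.
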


\section{An alternative proof of the higher integrability result (Lemma \ref{lemma_Lpgamma}) using the Michael-Simon and Allard inequality.}\label{appendix_alt_proof}
We present here an alternative proof of Lemma \ref{lemma_Lpgamma} based on using the Sobolev inequality of Michael-Simon and Allard on the level sets of a stable regular solution $u$. It gives a control on the $L^{p+\gamma}$-norm of the gradient of $u$ in terms of its $L^p$-norm in a larger ball, with $\gamma=\frac{2(p-1)}{n-1}$ when $n\geq4$ and $\gamma=\frac{2(p-1)}{3}$ when $n\leq3$ --- see Remark \ref{rmk_gamma} for a comparison with the values of~$\gamma$ given by our other proof.

We first recall the celebrated Michael-Simon and Allard inequality, in the form presented in \cite{MS} --- see also \cite[Section 2]{CM} for a quick and easy-to-read proof of this important result. As pointed out in the beginning of the Introduction of \cite{MS}, as well as in its Example 3, the inequality holds in $C^2$ hypersurfaces. This will be useful for our purposes. 

\begin{theorem}[Michael-Simon \cite{MS} and Allard \cite{All} inequality]
	\label{thm_MS&A} Let $M$ be a $C^2$ $(n-1)$-dimen-\-sional hypersurface of $\,\R^{n}$, $ q\in[1,n-1) $, and $\varphi\in C^{1}(M)$ have compact support in $M$. If $M$ is compact without boundary, any function $\varphi\in C^1(M)$ is allowed.
	
	Then, there exists a positive constant $C$ depending only on $n$ and $q$, such that
	\begin{equation}\label{ms&a}
	\lVert \varphi\rVert_{L^{q^*}(M)}^q\leq C\int_M\big\{\abs{\nabla_T\varphi}^q+\abs{H\varphi}^q\big\}\,d\mathcal{H}^{n-1},
	\end{equation}
	where $ q^*=\frac{(n-1)q}{n-1-q}$ is the Sobolev exponent, $\nabla_T$ denotes the tangential gradient to $M$, and $H$ is the mean curvature of $M$, i.e., the sum of its $n-1$ principal curvatures.
\end{theorem}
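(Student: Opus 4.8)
The plan is to reproduce the classical argument of Michael--Simon~\cite{MS}, in the form with nonzero mean curvature due to Allard~\cite{All} (see also~\cite{CM}): one first reduces \eqref{ms&a} to the endpoint exponent $q=1$, and then proves the $q=1$ case from the tangential divergence theorem on $M$ together with a covering argument. For the reduction, suppose \eqref{ms&a} is known for $q=1$, i.e.\ $\norm{\psi}_{L^{\frac{n-1}{n-2}}(M)}\leq C\int_M\big(\abs{\nabla_T\psi}+\abs{H\psi}\big)\,d\mathcal{H}^{n-1}$ for all admissible $\psi$. Given $q\in(1,n-1)$, apply this to $\psi:=\abs{\varphi}^{\beta}$ with $\beta:=\frac{(n-2)q}{n-1-q}>1$ --- which is $C^1$ with compact support since $\beta>1$ and $\varphi\in C^1_c$ --- chosen so that $\beta\cdot\frac{n-1}{n-2}=q^{*}$, whence the left-hand side equals $\norm{\varphi}_{L^{q^{*}}(M)}^{\beta}$. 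Using $\abs{\nabla_T\psi}=\beta\abs{\varphi}^{\beta-1}\abs{\nabla_T\varphi}$ and then Hölder's inequality with exponents $q$ and $q'=q/(q-1)$, the right-hand side is bounded by a constant (depending on $n,q$) times $\norm{\varphi}_{L^{q^{*}}(M)}^{q^{*}/q'}\big(\int_M(\abs{\nabla_T\varphi}+\abs{H\varphi})^{q}\,d\mathcal{H}^{n-1}\big)^{1/q}$, where one uses the elementary identity $(\beta-1)q'=q^{*}$. Since moreover $\beta-q^{*}/q'=1$, one may divide by $\norm{\varphi}_{L^{q^{*}}(M)}^{q^{*}/q'}$ and raise to the power $q$ to obtain \eqref{ms&a} (the case $\norm{\varphi}_{L^{q^{*}}}=0$ being trivial, and $\norm{\varphi}_{L^{q^{*}}}<\infty$ holding because $\varphi$ is compactly supported on a $C^2$ hypersurface). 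The restriction $q<n-1$ is precisely what keeps $\beta$ finite.

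For the endpoint $q=1$ we may assume $\varphi\geq0$ and work with the first variation identity $\int_M\mathrm{div}_M X\,d\mathcal{H}^{n-1}=-\int_M\langle\vec H,X\rangle\,d\mathcal{H}^{n-1}$, valid for $X\in C^1_c(\R^n;\R^n)$ when $M$ has no boundary in $\mathrm{supp}\,X$ (here $\abs{\vec H}=\abs{H}$). Fixing $x_0\in M$ and testing with $X(x)=\varphi(x)\,h(\abs{x-x_0})(x-x_0)$, where $h$ runs through a family of radial cutoffs that are a mollified multiple of the indicator of $[0,\rho]$, and using $\mathrm{div}_M(x-x_0)=n-1$ together with the fact that $x-x_0$ differs from its tangential part by its normal component, one arrives --- after integrating in $\rho$ and letting the inner radius go to zero, at which stage the $C^2$ regularity of $M$ fixes the volume density $\lim_{\sigma\to0}\sigma^{1-n}\mathcal{H}^{n-1}(M\cap B_\sigma(x_0))$ --- at the mean value inequality
\begin{equation*}
\varphi(x_0)\leq C\left(\rho^{1-n}\int_{M\cap B_\rho(x_0)}\varphi\,d\mathcal{H}^{n-1}+\int_{M\cap B_\rho(x_0)}\abs{x-x_0}^{2-n}\big(\abs{\nabla_T\varphi}+\abs{H}\varphi\big)\,d\mathcal{H}^{n-1}\right)
\end{equation*}
for every $x_0\in M$ and $\rho>0$, with $C=C(n)$; the same computation with $\varphi\equiv1$ additionally bounds $\mathcal{H}^{n-1}(M\cap B_\sigma(x_0))$ by a multiple of $\sigma^{n-1}$ plus an $\int\abs{H}$-term.

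To pass from this to $\norm{\varphi}_{L^{(n-1)/(n-2)}(M)}\leq C\int_M(\abs{\nabla_T\varphi}+\abs{H}\varphi)\,d\mathcal{H}^{n-1}$, set $g:=\abs{\nabla_T\varphi}+\abs{H}\varphi\in L^1(M)$. For $t>0$ and $x_0\in\{\varphi>t\}$, pick --- by continuity of $\rho\mapsto\rho^{1-n}\int_{M\cap B_\rho(x_0)}\varphi$, which tends to a fixed multiple of $\varphi(x_0)>t$ as $\rho\to0$ and to $0$ as $\rho\to\infty$ --- the radius $\rho(x_0)$ at which this quantity equals $\delta t$ for a small dimensional constant $\delta$; the mean value inequality then forces $\int_{M\cap B_{\rho(x_0)}(x_0)}\abs{x-x_0}^{2-n}g\,d\mathcal{H}^{n-1}\gtrsim t$, and the volume bound gives $\mathcal{H}^{n-1}(M\cap B_{\rho(x_0)}(x_0))\lesssim\rho(x_0)^{n-1}$. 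A Vitali subcovering of $\{\varphi>t\}$ by the balls $B_{\rho(x_0)}(x_0)$ then produces the weak-type estimate $\mathcal{H}^{n-1}(\{\varphi>t\})\leq C\,t^{-(n-1)/(n-2)}\norm{g}_{L^1(M)}^{(n-1)/(n-2)}$; applying it to the truncations $\min\{(\varphi-s)_+,\,\ell\}$ and summing over a dyadic family of levels $s$ --- the usual weak-to-strong upgrade at the first-order Sobolev endpoint --- yields the claimed strong inequality.

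The main obstacle is the derivation of the mean value inequality together with the closing of the covering argument. In the first variation computation one must arrange the cutoffs $h$ so that the leading term reconstructs the weighted volume ratio $\rho^{1-n}\int_{M\cap B_\rho}\varphi$, the $\nabla_T\varphi$ and $\vec H$ contributions appear precisely with the Riesz kernel $\abs{x-x_0}^{2-n}$ and no spare power of $\rho$, and the term carrying $\abs{(x-x_0)^\perp}^2$ keeps its favorable sign; and since $M$ need not be Ahlfors regular --- so one cannot simply invoke a Hardy--Littlewood--Sobolev theorem on $M$, and $\int_M\abs{H}$ may be large --- it is exactly the volume control extracted from the same monotonicity identity that allows the Vitali argument to close with the sharp exponent $(n-1)/(n-2)$. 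An alternative, giving even sharp constants, would be the more recent optimal-transport (ABP-type) proof of the Michael--Simon--Allard inequality, but the monotonicity-formula route sketched above is the classical one and suffices here.
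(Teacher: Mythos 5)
The paper does not prove this theorem: it states it as a known result, citing Michael--Simon \cite{MS}, Allard \cite{All}, and the short ABP-style proof in \cite{CM}. So there is no internal argument to compare yours against, and your proposal must be judged on its own as a reconstruction of the classical monotonicity proof.

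Your reduction from $q\in(1,n-1)$ to the endpoint $q=1$ is correct and complete: the exponent arithmetic $\beta\cdot\frac{n-1}{n-2}=q^{*}$, $(\beta-1)q'=q^{*}$, and $\beta-q^{*}/q'=1$ checks out, $\abs{\varphi}^{\beta}$ is $C^{1}$ with compact support because $\beta>1$ exactly when $q>1$, and the division by $\norm{\varphi}_{L^{q^{*}}(M)}^{q^{*}/q'}$ is legitimate since $\varphi$ is bounded with support of finite $\mathcal{H}^{n-1}$-measure on a $C^{2}$ hypersurface.

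The $q=1$ sketch contains a genuine error, however. You assert that the first-variation computation with $\varphi\equiv 1$ yields an \emph{upper} area bound $\mathcal{H}^{n-1}(M\cap B_{\sigma}(x_{0}))\leq C\sigma^{n-1}+(\text{an }\textstyle\int\abs{H}\text{-term})$, and you use this in the Vitali step as ``the volume bound gives $\mathcal{H}^{n-1}(M\cap B_{\rho(x_{0})}(x_{0}))\leq C\rho(x_{0})^{n-1}$.'' The monotonicity of $\rho^{1-n}\mathcal{H}^{n-1}(M\cap B_{\rho})$ (up to mean-curvature errors) runs the other way: sending $\rho\to 0$ gives the \emph{lower} density bound $\mathcal{H}^{n-1}(M\cap B_{\rho})\geq c\,\rho^{n-1}$ for small $\rho$. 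An upper bound of the sort you invoke is precisely upper Ahlfors regularity, which --- as you yourself correctly remark --- fails in general, so your argument is internally inconsistent at this point. What actually controls the measure in the covering step is the Chebyshev estimate $\mathcal{H}^{n-1}(\{\varphi>t\}\cap B_{\rho(x_{0})}(x_{0}))\leq t^{-1}\int_{M\cap B_{\rho(x_{0})}(x_{0})}\varphi\,d\mathcal{H}^{n-1}=\delta\,\rho(x_{0})^{n-1}$, which bounds the superlevel set inside the ball, not the area of $M$ there. Even this does not close immediately: Vitali passes to the $5$-dilated balls $B_{5\rho_{i}}(x_{i})$, and the defining equation $\rho^{1-n}\int_{M\cap B_{\rho}}\varphi=\delta t$ at $\rho=\rho_{i}$ gives no control at $\rho=5\rho_{i}$ since the weighted density is not monotone in $\rho$ when $\varphi$ varies. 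The classical proof handles this with a more careful stopping-time selection (or a Besicovitch-type iteration, as in the original \cite{MS} and in Simon's GMT notes). So the overall strategy --- first variation, monotonicity, mean-value inequality, covering, weak-to-strong upgrade --- is the right one, and the $q>1$ reduction is fine, but the area-control step as written is wrong and the covering argument as sketched would not close without additional work.
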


\begin{proof}[Alternative proof of Lemma  \ref{lemma_Lpgamma}] \textbf{Step 1.}
	If $n\leq3$, we can add additional artificial variables and reduce to the case $n>3$, by Remark \ref{artificial}. We thus assume that $n>3$ and claim that
	\begin{equation}\label{12}
	\int_\R\,dt\left(\int_{\{u=t\}\cap B_{1/2}}\abs{\nabla u}^\frac{(p-1)(n-1)}{n-3}\,d\mathcal{H}^{n-1}\right)^\frac{n-3}{n-1}\leq C\norm{\nabla u}_{L^p(B_1)}^p,
	\end{equation}
	where $C$ depends only on $n$ and $p$. 
	
	In order to prove \eqref{12}, observe that the surface integral in the left-hand side of \eqref{12} can be equivalently computed over $\{u=t\}\cap B_{1/2}$ or over $\{u=t\}\cap B_{1/2}\cap\{\abs{\nabla u}>0\}$.
	We use the Michael-Simon and Allard inequality \eqref{ms&a} on the $C^2$ hypersurface\footnote{Recall that $u$ is $C^2$ in $\{\abs{\nabla u}>0\}$ since here the equation is uniformly elliptic; see Remark \ref{rmk_integrability}.} $M=\{u=t\}\cap\{\abs{\nabla u}>0\}$. We would like to apply the inequality to the function 
	$$\varphi=\abs{\nabla u}^\frac{p-1}{2}\eta,$$ 
	where $ \eta\in C_c^\infty(B_{3/4})$ satisfies $0\leq\eta\leq1$ and $\eta\equiv1$ in $B_{1/2}$; however, this function will not have, in general, compact support in $M=\{u=t\}\cap\{\abs{\nabla u}>0\}$. Thus,  in \eqref{ms&a} we take the test function
	\[
	\varphi_\e=\abs{\nabla u}^\frac{p-1}{2}\eta\,\phi(\abs{\nabla u}/\e),
	\]
	with $\e>0$, where $\phi\in C^\infty(\R)$ takes values into $[0,1]$, $\phi(t)=1$ if $t\geq2$, and $\phi(t)=0$ if $t\leq1$. Note that $\phi(\abs{\nabla u}/\e)$ is the cut-off function that we already used in the proof of Lemma \ref{lemma_stab1}. Using Fatou's lemma, \eqref{ms&a} with $q=2<n-1$, and the coarea formula, we obtain
	\begin{equation}\label{11}
	\begin{split}
	\int_\R\,dt\left(\int_{\{u=t\}\cap B_{1}}\varphi^\frac{2(n-1)}{n-3}\,d\mathcal{H}^{n-1}\right)^\frac{n-3}{n-1}
	&\leq\liminf_{\e\downarrow0}\int_\R\,dt\left(\int_{\{u=t\}\cap B_{1}}\varphi_\e^\frac{2(n-1)}{n-3}\,d\mathcal{H}^{n-1}\right)^\frac{n-3}{n-1}
	\\
	&\hspace{-3cm}\leq
	C\liminf_{\e\downarrow0}\int_\R\,dt\int_{\{u=t\}\cap B_{1}\cap\{\abs{\nabla u}>0\}}\left(\abs{\nabla_T\varphi_\e}^2+H^2\varphi_\e^2\right)\,d\mathcal{H}^{n-1}
	\\
	&\hspace{-3cm}=
	C\liminf_{\e\downarrow0}\int_{B_{1}\cap\{\abs{\nabla u}>0\}}\abs{\nabla u}\left(\abs{\nabla_T\varphi_\e}^2+H^2\varphi_\e^2\right)\,dx.
	\end{split}
	\end{equation}	
	Here the tangential gradient $\nabla_T$ and the mean curvature $H$ are referred to the level set of~$u$ passing through a given point $x$. 	
	For the square modulus of the tangential gradient of $\varphi_\e$, the Cauchy-Schwarz inequality gives
	\begin{equation*}
	\begin{split}
	\abs{\nabla_T\varphi_\e}^2
	&\leq
	\frac{3(p-1)^2}{4}\abs{\nabla u}^{p-3}\abs{\nabla_T\abs{\nabla u}}^2\eta^2 \phi^2(\abs{\nabla u}/\e)
	+3\abs{\nabla u}^{p-1}\abs{\nabla_T\eta}^2\phi^2(\abs{\nabla u}/\e)
	\\
	&\hspace{5cm}+3\abs{\nabla u}^{p-1}\eta^2\abs{\nabla_T\phi(\abs{\nabla u}/\e)}^2.
	\end{split}
	\end{equation*}
	Plugging this inequality in \eqref{11}, using $\abs{\nabla_T\phi(\abs{\nabla u}/\e)}^2\leq\abs{\nabla\phi(\abs{\nabla u}/\e)}^2$, \eqref{cutoff_2}, the fact that $\abs{\phi'(\abs{\nabla u}/\e)}$ is bounded independently of $\e$ and it is supported in $\{\e\leq\abs{\nabla u}\leq2\e\}$, and that $\abs{\nabla u}^2\leq4\e^2$ in this set, we obtain
	\begin{equation*}
	\begin{split}
	&\hspace{-4mm}\int_\R\,dt\left(\int_{\{u=t\}\cap B_{1}}\left(\abs{\nabla u}^\frac{p-1}{2}\eta\right)^\frac{2(n-1)}{n-3}\,d\mathcal{H}^{n-1}\right)^\frac{n-3}{n-1}
	\\
	&\hspace{4mm}\leq
	C\limsup_{\e\downarrow0}\int_{B_{1}\cap\{\abs{\nabla u}>0\}}\bigg\{\frac{3(p-1)^2}{4}\abs{\nabla u}^{p-2}\abs{\nabla_T\abs{\nabla u}}^2\eta^2
	+3\abs{\nabla u}^{p}\abs{\nabla_T\eta}^2
	\\
	&\hspace{12mm}+
	H^2\abs{\nabla u}^{p}\eta^2\bigg\}\phi^2(\abs{\nabla u}/\e)\,dx
	+C\limsup_{\e\downarrow0}\int_{B_{1}\cap\{\e<\abs{\nabla u}<2\e\}}
	\abs{\nabla u}^{p-2}\lvert D^2u\rvert^2\eta^2\,dx.
	\end{split}
	\end{equation*}
	Now, using \eqref{funct_space1} and dominated convergence we deduce that the last term is zero. Using that $\phi^2\leq 1$ in
	the first integral in the right-hand side of the inequality, we get
	\begin{equation*}
	\begin{split}
	&\hspace{-0.2cm}\int_\R\,dt\left(\int_{\{u=t\}\cap B_{1}}\left(\abs{\nabla u}^\frac{p-1}{2}\eta\right)^\frac{2(n-1)}{n-3}\,d\mathcal{H}^{n-1}\right)^\frac{n-3}{n-1}
	\\ 
	&\hspace{1.5cm}\leq 
	C\int_{B_{1}\cap\{\abs{\nabla u}>0\}}\bigg\{\frac{3(p-1)^2}{4}\abs{\nabla u}^{p-2}\abs{\nabla_T\abs{\nabla u}}^2
	+H^2\abs{\nabla u}^{p}\bigg\}\eta^2\,dx
	\\
	&\hspace{6.5cm}+C\int_{B_{1}}\abs{\nabla u}^{p} \abs{\nabla_T\eta}^2\,dx.
	\end{split}
	\end{equation*}	
	Since $H^2\leq(n-1)\abs{A}^2$, we deduce that
	\begin{equation*}
	\begin{split}
	&\int_\R\,dt\left(\int_{\{u=t\}\cap B_{1}}\left(\abs{\nabla u}^\frac{p-1}{2}\eta\right)^\frac{2(n-1)}{n-3}\,d\mathcal{H}^{n-1}\right)^\frac{n-3}{n-1}
	\\
	&\hspace{0.5cm}\leq C \int_{B_1\cap\{\abs{\nabla u}>0\}} \left\{(p-1)\abs{\nabla u}^{p-2}\abs{\nabla_T\abs{\nabla u}}^2+\abs{A}^2\abs{\nabla u}^p\right\}\eta^2\,dx
	+C\int_{B_1}\abs{\nabla u}^{p}\,dx.
	\end{split}
	\end{equation*}	
	Finally, we use the stability condition in its geometric form, Theorem \ref{thm_SZ_stability}, to obtain
	\begin{equation*}
	\begin{split}
	&\hspace{-0.6cm}\int_\R\,dt\left(\int_{\{u=t\}\cap B_{1}}\left(\abs{\nabla u}^\frac{p-1}{2}\eta\right)^\frac{2(n-1)}{n-3}\,d\mathcal{H}^{n-1}\right)^\frac{n-3}{n-1}
	\\
	&\hspace{1cm}\leq
	C \int_{B_1}\abs{\nabla u}^p\abs{\nabla \eta}^2\,dx +C\int_{B_1}\abs{\nabla u}^{p}\,dx
	\leq C \norm{\nabla u}^p_{L^p(B_1)}.
	\end{split}
	\end{equation*}
	This proves \eqref{12}. 
	
	\medskip
	
	\textbf{Step 2.} Now we show that, for almost every $t\in\R$,
	\begin{equation}\label{13}
	\int_{\{u=t\}\cap B_{1/2}}\abs{\nabla u}^{p-1}\,d\mathcal{H}^{n-1}
	\leq C\norm{\nabla u}_{L^{p-1}(B_1)}^{p-1}
	\leq C\norm{\nabla u}_{L^p(B_1)}^{p-1},
	\end{equation}
	where the constants $C$ depend only on $n$ and $p$. Note that this estimate is similar to \eqref{89}, where we assumed $\norm{\nabla u}_{L^p(B_1)}=1$, but we cannot deduce \eqref{13} from \eqref{89} through H\"older's inequality, since we do not have a control on the measure of $\{u=t\}\cap B_{1/2}$. However, the proofs of both estimates rely on the same idea.
	
	First, we take $ \eta $ as defined in Step 1 and we claim that, for almost every $t\in\R$,
	\begin{equation}\label{9}
	\int_{\{u=t\}\cap B_{1}}\abs{\nabla u}^{p-1}\eta\,d\mathcal{H}^{n-1}=-\int_{\{u>t\}\cap B_{1}}\diver\left(\eta\abs{\nabla u}^{p-2}\nabla u\right)\,dx.
	\end{equation}
	Observe that $u$ is not regular enough to apply Sard's theorem and deduce regularity of $\{u=t\}$ for a.e. $t$, and hence we cannot integrate by parts in the set $\{u>t\}\cap B_{1}$. However, as in the proof of~\eqref{4}, we can use a smooth approximation $K_\e(s)$ of the characteristic function of~$ \R_+ $, and then send $\e\downarrow0$, to establish \eqref{9}.
	
	From \eqref{9} we obtain that
	\begin{equation*}
	\int_{\{u=t\}\cap B_{1}}\abs{\nabla u}^{p-1}\eta\,d\mathcal{H}^{n-1} \leq
	\int_{B_{1}}\abs{\plaplacian u}\eta\,dx+\int_{B_{1}}\abs{\nabla u}^{p-2}\abs{\nabla u\cdot\nabla\eta}\,dx.
	\end{equation*}
	Now, since $0\leq\eta\leq1$ has compact support in $B_{3/4}$, we can use Lemma \ref{positive_plap} to obtain  
	\[
	\int_{B_1}\abs{\plaplacian u}\eta\,dx\leq 
	\int_{B_{3/4}}\abs{\plaplacian u}\,dx\leq
	C\norm{\nabla u}_{L^{p-1}(B_1)}^{p-1}.
	\]
	Combining this with the previous bound, we conclude the proof of \eqref{13}.
	
	\medskip
	
	\textbf{Step 3.}
	Finally, from \eqref{12} and \eqref{13} we deduce \eqref{19}. 
	For this, we use H\"older's inequality with exponents $ q=\frac{n-1}{n-3}$ and $q'=\frac{n-1}{2}$, as well as the coarea formula, to obtain
	\begin{equation*}
	\begin{split}
	&\hspace{-0.3cm}\int_{B_{1/2}}\abs{\nabla u}^{p+\frac{2(p-1)}{n-1}}\,dx
	=\int_\R\,dt\int_{\{u=t\}\cap B_{1/2}}\abs{\nabla u}^{p-1} \abs{\nabla u}^\frac{2(p-1)}{n-1}\,d\mathcal{H}^{n-1}
	\\ 
	&\hspace{0.8cm}\leq
	\int_\R\,dt	\left(\int_{\{u=t\}\cap B_{1/2}}\abs{\nabla u}^\frac{(p-1)(n-1)}{n-3}\,d\mathcal{H}^{n-1}\right)^\frac{n-3}{n-1} \left(\int_{\{u=t\}\cap B_{1/2}}\abs{\nabla u}^{p-1}\,d\mathcal{H}^{n-1}\right)^\frac2{n-1}
	\\
	&\hspace{0.8cm}\leq
	C\norm{\nabla u}_{L^p(B_1)}^{\frac{2(p-1)}{n-1}}
	\int_\R\,dt	\left(\int_{\{u=t\}\cap B_{1/2}}\abs{\nabla u}^\frac{(p-1)(n-1)}{n-3}\,d\mathcal{H}^{n-1}\right)^\frac{n-3}{n-1}
	\\
	&\hspace{0.8cm}\leq
	C\norm{\nabla u}_{L^p(B_1)}^{p+\frac{2(p-1)}{n-1}}.
	\end{split}
	\end{equation*}
	This concludes the alternative proof of Lemma \ref{lemma_Lpgamma}.
\end{proof}

	\addsection*{Acknowledgments} The authors would like to thank Lucio Boccardo and Luigi Orsina for a simplification in the proof of Lemma \ref{lemma_convergence}.


\begin{thebibliography}{99}
	
\bibitem{A}
Adams, D. R. 
A note on Riesz potentials,
{\it Duke Math. J.} 42 (1975), 765-778. 

\bibitem{All}
Allard, W. K.
On the First Variation of a Varifold,
{\it Ann. of Math.}  95 (1972), 417-491.

\bibitem{B}
Brezis, H. 
Is there failure of the inverse function theorem?,
{\it Morse theory, minimax theory and their applications to nonlinear differential equations}, 23-33, New Stud. Adv. Math. 1, Int. Press,
Somerville, MA, 2003.

\bibitem{BetA}
Brezis, H.; Cazenave, T.; Martel Y.; Ramiandrisoa A.
Blow up for $ u_t-\Delta u =g(u) $ revisited, 
{\it Adv. Differential Equations} 1 (1996), 73-90.

\bibitem{BV}
Brezis, H.; Vázquez, J. L.
Blow-up solutions of some nonlinear elliptic problems,
{\it Rev. Mat. Complut.} 10 (1997), 443-469.

\bibitem{C}
Cabr\'{e}, X.
Regularity of minimizers of semilinear elliptic problems up to dimension 4,
{\it Comm. Pure Appl. Math.} 63 (2010), 1362-1380.

\bibitem{C2}
Cabr\'{e}, X.
Boundedness of stable solutions to semilinear elliptic equations: a survey,
{\it Adv. Nonlinear Stud.} 17 (2017), 355-368.

\bibitem{C1}
Cabr\'{e}, X.
A new proof of the boundedness results for stable solutions to semilinear elliptic equations. 
{\it Discrete Contin. Dyn. Syst.} 39 (2019), 7249-7264.

\bibitem{CCaSa}
Cabr\'{e}, X.: Capella, A.; Sanch\'on, M.
Regularity of radial minimizers of reaction equations involving the $p$-Laplacian,
{\it Calc. Var. Partial Differential Equations} 34 (2009), 475-494.

\bibitem{CCh}
Cabré, X.; Charro, F. 
The optimal exponent in the embedding into the
Lebesgue spaces for functions with gradient in the Morrey space,
{\it preprint}, arXiv:1907.12982 (2019).

\bibitem{CFRS}
Cabr\'e, X.; Figalli, A.; Ros-Oton, X.; Serra, J.
Stable solutions to semilinear elliptic equations are smooth up to dimension 9,
to appear in {\it Acta Mathematica}. 
{\it Preprint} arXiv:1907.09403 (2019).

\bibitem{CM}
Cabr\'e, X.; Miraglio, P.
Universal Hardy-Sobolev inequalities on hypersurfaces of Euclidean space,
{\it preprint}, arXiv:1912.09282 (2019).

\bibitem{CSa}
Cabr\'e, X.; Sanchón, M.
Semi-stable and extremal solutions of reaction equations involving the $p$-Laplacian,
{\it Comm. Pure Appl. Anal.} 6 (2007), 43-67.

\bibitem{CasSa}
Castorina, D.; Sanchón, M.
Regularity of stable solutions of $p$-Laplace equations through geometric Sobolev type inequalities,
{\it J. Eur. Math. Soc.} 17 (2015), 2949-2975.

\bibitem{CSV}
Cinti, E.; Serra, J.; Valdinoci, E.
Quantitative flatness results and BV-estimates for stable nonlocal minimal surfaces,
{\it J. Differential Geom.} 112 (2019), 447-504.

\bibitem{CR}
Crandall, M. G.; Rabinowitz, P. H.
Some continuation and variational methods for positive solutions of nonlinear elliptic eigenvalue problems,
{\it Arch. Rational Mech. Anal.} 58 (1975), 207-218.

\bibitem{DS}
Damascelli, L.; Sciunzi, B.
Regularity, monotonicity and symmetry of positive solutions of $m$-Laplace equations,
{\it J. Differential Equations} 206 (2004), 483-515.

\bibitem{DiB}
DiBenedetto, E. 
$C^{1+\alpha }$ local regularity of weak solutions of degenerate elliptic equations,
{\it Nonlinear Anal.} 8 (1983), 827-850.

\bibitem{DiBbook}
DiBenedetto, E. 
Degenerate parabolic equations, 
{\it Universitext}, Springer-Verlag, New York (1993).

\bibitem{D}
Dupaigne, L.
Stable solutions of elliptic partial differential equations,
{\it Chapman and Hall/CRC} 143, CRC Press (2011).

\bibitem{FSV}
Farina, A.; Sciunzi, B.; Valdinoci, E.
On a Poincar\'{e} type formula for solutions of singular and degenerate elliptic equations,
{\it Manuscripta Math.} 132 (2010), 335-342.

\bibitem{FSV1}
Farina, A.; Sciunzi, B.; Valdinoci, E.
Bernstein and De Giorgi type problems: new results via a geometric approach,
{\it Ann. Sc. Norm. Super. Pisa Cl. Sci.} 7 (2008), 741-791.

\bibitem{F}
Ferrero, A.;
On the solutions of quasilinear elliptic equations with a polynomial-type reaction term,
{\it Adv. Differential Equations} 9 (2004), 1201-1234.

\bibitem{GP}
Garc\'{i}a Azorero, J.; Peral Alonso, I.
On an Emden-Fowler type equation,
{\it Nonlinear Anal.} 18 (1992), 1085-1097.

\bibitem{GPP}
Garc\'{i}a Azorero, J.; Peral Alonso, I.; Puel, J.-P.
Quasilinear problems with exponential growth in the reaction term,
{\it Nonlinear Anal.} 22 (1994), 481-498.

\bibitem{GT}
Gilbarg, D.; Trudinger, N. S.
Elliptic partial differential equations of second order,
{\it Reprint of the 1998 edition. Classics in Mathematics.} Springer-Verlag, Berlin (2001).	

\bibitem{LL}
Lieb E. H.; Loss M. 
Analysis,
{\it Graduate Studies in Mathematics} 14, American Mathematical Society, Providence, RI (1997).

\bibitem{L}
Lieberman, G. M. 
Boundary regularity for solutions of degenerate elliptic equations,
{\it Nonlinear Anal.} 12 (1988), 1203-1219.

\bibitem{Lind}
Lindqvist, P. 
Notes on the $p$-Laplace equation,
{\it Report. University of Jyväskylä Department of Mathematics and Statistics} 102, University of Jyväskylä, Jyväskylä (2006).

\bibitem{Lou}
Lou, H.
On singular sets of local solutions to $p$-Laplace equations,
{\it Chin. Ann. Math. Ser. B} 29 (2008), 521-530.

\bibitem{MS}
Michael, J. H.; Simon, L. M.
Sobolev and Mean Value Inequalities on Generalized Submanifolds of $\R^n$,
{\it Comm. Pure Appl. Math.} 26 (1973), 361-379.

\bibitem{Miraglio}
Miraglio, P.
Boundedness of stable solutions to nonlinear equations involving the $p$-Laplacian,
{\it J. Math. Anal. Appl.} 489 (2020), 15pp.

\bibitem{N}
Nedev, G.
Regularity of the extremal solution of semilinear elliptic equations,
{\it C. R. Acad. Sci. Paris	Sér. I Math.} 330 (2000), 997-1002.

\bibitem{San}
Sanchón, M.
Boundedness of the extremal solution of some $p$-Laplacian problems,
{\it Nonlinear Anal.} 67 (2007), 281-294.

\bibitem{San1}
Sanchón, M.
Regularity of the extremal solution of some nonlinear elliptic problems involving the $p$-Laplacian,
{\it Potential Anal.} 27 (2007), 217-224.
	
\bibitem{Simon} 
Simon, L.
Schauder estimates by scaling,
{\it Calc. Var. Partial Differential Equations} 5 (1997), 391-407.	

\bibitem{SZ}
Sternberg, P.; Zumbrun, K.
Connectivity of phase boundaries in strictly convex domains,
{\it Arch. Rational Mech. Anal.} 141 (1998), 375-400.

\bibitem{T}
Tolksdorf, P. 
Regularity for a more general class of quasilinear elliptic equations,
{\it J. Differential Equations} 51 (1984), 126-150.
\end{thebibliography}
\end{document}